\let\OLDthebibliography\thebibliography
\renewcommand\thebibliography[1]{
	\OLDthebibliography{#1}
	\setlength{\itemsep}{0pt}
}
\newcommand{\al}{\alpha}
\newcommand{\C}{\mathbb C}
\newcommand{\Q}{\mathbb Q}
\newcommand{\Z}{\mathbb Z}
\newcommand{\image}{\operatorname{im}}
\newcommand{\sign}{\operatorname{sign}}
\newcommand{\rk}{\operatorname{rk}}
\newcommand{\ord}{\operatorname{ord}}
\newcommand{\q}{\operatorname{q}}
\newcommand{\ka}{k}
\newcommand{\spann}{\operatorname{span}}
\newcommand{\leg}[2]{\left(\frac{#1}{#2}\right)}
\newcommand{\sumstack}[1]{\sum_{\substack{#1}}}
\newcommand{\SL}{\mathrm{SL}_2(\Z)}
\newcommand{\SLn}[1]{\mathrm{SL}_2(\Z/#1\Z)}
\newcommand{\Mp}{\mathrm{Mp}_2(\Z)}
\newcommand{\mainthm}{Let $D$ be a discriminant form. Then all modular forms for the Weil representation of $D$ are linear combinations of modular forms of the form $\uparrow_H\!(F)$, where $H\subset D$ is an isotropic subgroup such that $H^\bot/H$ is of small type and $F$ is a modular form for the Weil representation of $H^\bot/H$. For any discriminant form of small type there exist modular forms which are not linear combinations of isotropically lifted modular forms.}
\newtheorem{thm}{Theorem}[section]
\newtheorem{prp}[thm]{Proposition}
\newtheorem{cor}[thm]{Corollary}
\newtheorem{lem}[thm]{Lemma}
\theoremstyle{definition}
\theoremstyle{remark}
\begin{document}

\begin{center}

{\Large\bf Modular forms for the Weil representation\\[2mm]
induced from isotropic subgroups}\\[12mm]
Manuel K.-H. M\"uller,\\
Fachbereich Mathematik, Technische Universit\"at Darmstadt, 
Schlo{\ss}gartenstra{\ss}e 7, 64289 Darmstadt, Deutschland\\
mmueller@mathematik.tu-darmstadt.de\end{center}

\vspace*{2cm}
\noindent
For an isotropic subgroup $H$ of a discriminant form $D$ there exists a lift from modular forms for the Weil representation of the discriminant form $H^\bot/H$ to modular forms for the Weil representation of $D$. We determine a set of discriminant forms such that all modular forms for any discriminant form are induced from the discriminant forms in this set. Furthermore for any discriminant form in this set there exist modular forms that are not induced from smaller discriminant forms.

\vspace*{0.25cm}
\noindent
Keywords: Weil representation, modular forms of half-integer weight, vector valued modular forms


\vspace*{1.25cm}
\begin{tabular}{rl}
1  & Introduction \\
2  & Definitions \\
3  & Reduction to lifting pointwise \\
4  & Image of the  lifts \\
5  & Main theorem \\
6  & Acknowledgements
\end{tabular}
\vspace*{0.8cm}

\section{Introduction}

The metaplectic group $\Mp$ acts on the group ring $\C[D]$ of a discriminant form $D$. This representation is called the Weil representation. It is a special case of representations of $\mathrm{Sp}_{2n}$ constructed in \cite{W}, which play an important role in the theory of automorphic forms.

Modular forms for the Weil representation are often induced from modular forms for smaller discriminant forms. Since these are much easier to understand, it is important to know when this happens. In the present paper we describe for which discriminant forms all modular forms for the Weil representation are linear combinations of lifts and for which discriminant forms there exist modular forms which are not induced from smaller discriminant forms. If one restricts to modular forms of a specific weight $k\in\frac{1}{2}\Z$, then the list of discriminant forms for which modular forms not induced from smaller discriminant forms exist gets smaller and it is in general much more difficult to determine this smaller list. For the case of weight $0$, i.e. the invariants of the Weil representation, it was shown in \cite{MS} that all invariants are induced from $5$ fundamental discriminant forms. 

We describe our results in more detail. A discriminant form is a finite abelian group $D$ with a non-degenerate quadratic form $\q : D \to \Q/\Z$. The level of $D$ is the smallest positive integer $N$ such that $N\q(\gamma) \in \Z$ for all $\gamma \in D$. Every discriminant form can be realized as the dual quotient $L'/L$ of an even lattice $L$. The signature of $L$ is unique modulo $8$. We can even assume that $L$ is positive definite. The vector valued theta function $\theta_L$ of $L$ takes values on the group algebra $\C[D]$, generated by the formal basis $(e^\gamma)_{\gamma\in D}$. It is given by
\[ \theta(\tau) = 
\sum_{\gamma \in D} \theta_{\gamma}(\tau) e^{\gamma} \]
with $\theta_{\gamma}(\tau) = \sum_{\al \in \gamma+L} q^{\al^2/2}$.
The Poisson summation formula implies that $\theta$ transforms as a vector valued modular form under the metaplectic cover $\Mp$ of $\SL$. The corresponding representation $\rho_D$ of $\Mp$ on the group algebra $\C[D]$ is called the Weil representation of $\Mp$. We say that a function $F:\mathbb{H}\to\C[D]$ is a modular form of weight $\ka\in\frac{1}{2}\Z$ for the Weil representation if $F$ is holomorphic, holomorphic at the cusps and satisfies
\begin{align*}
    F(M\tau) = \phi(\tau)^{2\ka}\rho_D(M,\phi)F(\tau)
\end{align*}
for all $(M,\phi)\in\Mp$, where $M\tau = \frac{a\tau+b}{c\tau+d}$ denotes the M\"obius transformation of $M=\left(\begin{smallmatrix}
	a&b\\c&d
\end{smallmatrix}\right)\in\mathrm{SL}_2(\Z)$. We denote the space of modular forms for the Weil representation of $D$ by $\mathrm{M}_\ka(D)$.

Let $H$ be an isotropic subgroup of $D$. Then $H^{\perp}/H$ is a discriminant form of the same signature as $D$ and of order $|H^{\perp}/H| = |D|/|H|^2$. There is an isotropic lift $\uparrow_H : \C[H^{\perp}/H] \to \C[D]$ which commutes with the corresponding Weil representations (see Section \ref{sec:Definitions}). In particular $\uparrow_H$ maps modular forms to modular forms. Our goal is to determine a list of discriminant forms such that modular forms of the form $\uparrow_H(F)$, where $H^\bot/H$ is isomorphic to a discriminant form in said list generate all modular forms for $D$. For this purpose we introduce the notion of small type:\\
First assume that $D$ is a discriminant form of level a power of a prime $p$. If $p$ is odd we say that $D$ is of small type if one of the following conditions holds:
\begin{enumerate}[(i)]
	\item $D$ has rank two or less.
	\item $D$ has rank three and at least one Jordan component is of level $p$.
	\item $D$ has rank four and is of type $p^{-\epsilon2}q_1^{\pm1}q_2^{\pm1}$, where $\epsilon = \leg{-1}{p}$ and $q_1,q_2$ are powers of $p$ and can also be $p$.
	\item $D$ has rank five and is of level $p$.
\end{enumerate}
If $p=2$ there is a similar characterization (cf. Section \ref{sec:MainTheorem}). If $D$ has level $N$ and $N=\prod_{p|N} p^{\nu_p}$ is the prime decomposition of $N$, then $D$ decomposes into the orthogonal sum of its $p$-subgroups 
\[  D = \bigoplus_{p|N} D_{p^{\nu_p}} \,   \]
where $D_{p^{\nu_p}}$ is the kernel of $\gamma\mapsto p^{\nu_p}\cdot\gamma$. We say that a discriminant form of level $N$ is of small type if for all $p\mid N$ the $p$-subgroups $D_{p^{\nu_p}}$ of $D$ are of small type. We remark that any discriminant form of rank $\geq7$ is not of small type. Now the main result of the present paper is Theorem \ref{thm:mainthm}:

\medskip

{\em \mainthm}

\medskip

This result is one ingredient for a theory of vector valued new forms for the Weil representation, which is still to be developed.

We remark that in his Ph.D.\ Thesis \cite{Wr} Werner showed that for a discriminant form $D$ of level $N$ all modular forms are linear combinations of isotropically lifted modular forms if $|D|\geq N^9$. In contrast to Werner's result Theorem \ref{thm:mainthm} is sharp.

We sketch the proof of the theorem. We say that a modular form $F\in\mathrm{M}_\ka(D)$ is a linear combination of isotropically lifted modular forms if we can write
\[ F = \sum_{0\not= H}\uparrow_H(F_H) \]
for suitable modular forms $F_H\in\mathrm{M}_\ka(H^\bot/H)$. Here we sum over all non-trivial isotropic subgroups. We show that for a discriminant form $D$ all modular forms are linear combinations of isotropically lifted modular forms if and only if $D$ is not of small type. Using the fact that the lifts are transitive, Theorem \ref{thm:mainthm} then follows by induction.

First we show that for a modular form being a linear combination of isotropically lifted modular forms is actually a pointwise property: $F$ is a linear combination of isotropically lifted modular forms if and only if for every $\tau$ the point $F(\tau)\in\C[D]$ is a linear combination of isotropically lifted points $v\in\C[H^\bot/H]$. One direction of this equality is trivial, the other one is proved in Proposition \ref{prp:iTildeIsI}. 
For $\ka$ large enough every subrepresentation of $\rho_D$ contains some non-trivial modular form of weight $\ka$. We deduce that all modular forms for all weights are linear combinations of isotropically lifted modular forms if and only if the space generated by the images of the maps $\uparrow_H$ is all of $\C[D]$ (see Theorem \ref{thm:MainThmNeedsAllCD}). The latter question can be reduced to the $p$-subgroups.

So it remains to show that the isotropic lifts generate $\C[D]$ if and only if $D$ is not of small type. The idea of the proof is to find some condition on $\gamma\in D$ which is equivalent to $e^\gamma$ being a linear combination of isotropic lifts. For $p$ odd this condition says that $\gamma^\bot$ contains some isotropic subgroup isomorphic to $(\Z/p\Z)^2$ (see Proposition \ref{prp:eGammaIff}). When $D$ contains an isotropic subgroup isomorphic to $(\Z/p\Z)^3$, then this is the case for all $\gamma\in D$. Interestingly the other direction holds as well for all discriminant forms except for $p^{-\epsilon6}$ with $\epsilon = \leg{-1}{p}$. This form does not contain an isotropic subgroup isomorphic to $(\Z/p\Z)^3$, but still for every $\gamma\in D$ the subgroup $\gamma^\bot$ contains an isotropic subgroup isomorphic to $(\Z/p\Z)^2$. The form $p^{-\epsilon6}$ is the only discriminant form with this property.

In the case $p=2$ the same condition also implies that $e^\gamma\in\image(\uparrow)$, however it is too strong. We will use a result from graph theory for a sharper condition (see Proposition \ref{prp:eGammaIff21}).

In both cases we determine the discriminant forms that do not contain an isotropic subgroup isomorphic to $(\Z/p\Z)^3$ and then check for which of them the condition holds for all $\gamma\in D$ and for which there exists a $\gamma\in D$, where the condition does not hold (see Theorem \ref{thm:mainp} and \ref{thm:main2}). We remark that, if a $p$-adic discriminant form is of small type, then it does not contain an isotropic subgroup isomorphic to $(\Z/p\Z)^3$. Furthermore, there are very few $p$-adic discriminant forms that are not of small type and do not contain an isotropic subgroup isomorphic to $(\Z/p\Z)^3$.


\section{Definitions}\label{sec:Definitions}

In this section we recall some results on discriminant forms, the Weil representation and modular forms.  References are \cite {AGM}, \cite{Bo2}, \cite{CS}, \cite{N}, \cite{S1} and \cite{Sk}.

\medskip


A discriminant form is a finite abelian group $D$ with a quadratic form $\q : D \to \Q/\Z$ such that $(\beta,\gamma) = \q(\beta + \gamma)- \q(\beta) - \q(\gamma) \bmod 1$ is a non-degenerate symmetric bilinear form. The level of $D$ is the smallest positive integer $N$ such that
$N\q(\gamma) = 0 \bmod 1$ for all $\gamma \in D$. An element $\gamma\in D$ is called isotropic if $\q(\gamma) = 0\bmod1$ and anisotropic otherwise.



If $L$ is an even lattice then $L'/L$ is a discriminant form with the quadratic form given by $\q(\gamma) = \gamma^2/2 \bmod 1$. Conversely, every discriminant form can be obtained in this way. The corresponding lattice can be chosen to be positive definite. The signature $\sign(D) \in \Z/8\Z$ of a discriminant form $D$ is defined as the signature modulo $8$ of any even lattice with that discriminant form.

 
Every discriminant form decomposes into a sum of Jordan components and every Jordan component can be written as a sum of indecomposable Jordan components (usually not uniquely). For the Jordan components we use the notation introduced by Conway and Sloane (cf.\ \cite{CS}, chapter 15). Recall that for $q$ a power of an odd prime $p$ the symbol $q^{\pm n}$ denotes a discriminant form that, as a group, is isomorphic to $(\Z/q\Z)^n$ and has level $q$. For $q$ a power of $2$ the symbol $q_{I\!I}^{\pm 2n}$ denotes a discriminant form that, as a group, is isomorphic to $(\Z/q\Z)^{2n}$ and has level $q$ and $q_{t}^{\pm n}$ denotes a discriminant form that, as a group, is isomorphic to $(\Z/q\Z)^{n}$ and has level $2q$. The former are called the even $2$-adic components, the latter the odd $2$-adic components.

\medskip

The metaplectic group $\Mp$ is a double cover of $\SL$. It can be described as the group of pairs $\left(\left(\begin{smallmatrix} a & b \\ c & d \end{smallmatrix}\right),\phi(\tau)\right)$, where $\left(\begin{smallmatrix} a & b \\ c & d \end{smallmatrix}\right)\in\SL$ and $\phi$ is a holomorphic function on the upper half plane such that $\phi(\tau)^2 = c\tau+d$. The product of $(M_1, \phi_1(\tau)), \ (M_2, \phi_2(\tau)) \in \Mp$ is given by
\begin{align*}
	(M_1,\phi_1(\tau))(M_2,\phi_2(\tau)) = (M_1M_2,\phi_1(M_2\tau)\phi_2(\tau)),
\end{align*}
where $M\tau = \frac{a\tau+b}{c\tau+d}$ denotes the usual M\"obius transform on $\mathbb{H}$. The standard generators of $\Mp$ are
$S = \left(\left( \begin{smallmatrix} 0 & -1 \\ 1 & 0 \end{smallmatrix} \right),\sqrt{\tau}\right)$ and 
$T = \left(\left( \begin{smallmatrix} 1 &  1 \\ 0 & 1 \end{smallmatrix} \right),1\right)$. Let $D$ be a discriminant form of level $N$ and let $\C [D]$ be its group ring generated by a formal basis $(e^\gamma)_{\gamma\in D}$. Then the \textit{Weil representation} of $\Mp$ is defined as (cf. \cite{Bo1})
\begin{align*} 
\rho_D(T) e^{\gamma}  & = e(\q(\gamma))\, e^{\gamma} \\
\rho_D(S) e^{\gamma}  & = \frac{e(-\sign(D)/8)}{\sqrt{|D|}}
                  \sum_{\beta\in D} e(-(\gamma,\beta))\, e^{\beta},
\end{align*}
where $e(z) := e^{2\pi i z}$. For discriminant forms of even signature the inverse image of the group $\Gamma(N) = \{M\in\SL\mid M=I\bmod N\}$ under the covering map $\Mp\rightarrow\SL$ acts trivially in the Weil representation. For odd signature there is a unique section $s$ such that $s(\Gamma(N))$ acts trivially. We denote the corresponding group in both cases by $\mathrm{Mp}_2(N)$. The quotient $\Mp/\mathrm{Mp}_2(N)$ is isomorphic to $\SLn{N}$ for even signature and to a double cover of $\SLn{N}$ for odd signature.

We define a scalar product on the group ring $\C[D]$ which is linear in the first and antilinear in the second variable by
\begin{align*}
	\langle e^{\gamma},e^{\beta}\rangle= \delta_{\gamma \beta}.
\end{align*}
The Weil representation is unitary with respect to this scalar product.

Let $\ka\in\frac{1}{2}\Z$ and $f$ be a function from $\mathbb{H}$ to a complex vector space. For $(M,\phi)\in\Mp$ we define the Petersson-slash operator $|_\ka$ by
\begin{equation*}
    (f|_\ka(M,\phi))(\tau) = \phi(\tau)^{-2\ka}f(M\tau).
\end{equation*}
A function $F:\mathbb{H} \to \C[D]$ is called a modular form of weight $\ka$ with respect to $\rho_D$ and $\Mp$ if
\begin{enumerate}[(i)]
	\item $F|_\ka(M,\phi) = \rho_D(M,\phi)F$ for all $(M,\phi)\in\Mp$,
	\item $F$ is holomorphic on $\mathbb{H}$,
	\item $F$ is holomorphic at the cusp $\infty$.
\end{enumerate}
Here condition (iii) means that $F$ has a Fourier expansion of the form
\begin{equation*}
    F(\tau) = \sum_{\gamma\in D}\sumstack{n\in\Z+\q(\gamma) \\ n\geq0}^\infty c(\gamma,n)e(n\tau)e^\gamma.
\end{equation*}
Moreover, if all $c(\gamma, n)$ with $n = 0$ vanish, then $F$ is called a cusp form. The
$\C$-vector space of modular forms of weight $\ka$ with respect to $\rho_D$ and $\Mp$ is denoted by $\mathrm{M}_\ka(D)$, the subspace of cusp forms by $\mathrm{S}_\ka(D)$.

We furthermore denote by $\mathrm{M}_\ka(\mathrm{Mp}_2(N))$ the space of scalar-valued modular forms for the subgroup $\mathrm{Mp}_2(N)$. Let $f\in\mathrm{M}_\ka(\mathrm{Mp}_2(N))$ and $v\in\C[D]$. Then
\begin{equation*}
    F_{f,v} := \frac{1}{|\mathrm{Mp}_2(N)\backslash\Mp|}\sumstack{(M,\phi)\in \\ \mathrm{Mp}_2(N)\backslash\Mp}f|_\ka(M,\phi)\rho_D(M,\phi)^{-1}v\in\mathrm{M}_\ka(D)
\end{equation*}
is a modular form for the Weil representation. These modular forms span $\mathrm{M}_\ka(D)$, when $f$ ranges over $\mathrm{M}_\ka(\mathrm{Mp}_2(N))$ and $v$ ranges over $\C[D]$ (This is a classical idea, cf. e.g. \cite{Se}, in the case of the Weil representation of $\SL$ see \cite{S2}). 

\medskip

Let $D$ be a discriminant form and $H$ an isotropic subgroup of $D$. Then $H^\bot/H$ is a discriminant form of the same signature as $D$ and order $|H^\bot/H| = |D|/|H|^2.$ There is an \textit{isotropic lift}
\[ \uparrow_H \ := \ \uparrow_H^D \ : \C[H^{\perp}/H] \to \C[D]  \]
defined by
\[  \uparrow_H(e^{\gamma+H}) = \sum_{\mu \in H} e^{\gamma + \mu}   \]
for $\gamma\in H^{\bot}$ and an \textit{isotropic descent} 
\[ \downarrow_H \ := \ \downarrow_H^D \ : \C[D] \to \C[H^{\bot}/H] \]
defined by
\[ \downarrow_H(e^{\gamma}) = 
\begin{cases}
\, e^{\gamma+H} & \text{if $\gamma\in H^{\perp}$}, \\
\,          0 & \text{otherwise} 
\end{cases} 
\]
(cf. e.g. \cite{Br}, \cite{S3}). The following results are easy to prove (see \cite{MS}).

\begin{prp}
Let $D$ be a discriminant form of even signature and $H$ an isotropic subgroup of $D$. Then the maps $\uparrow_H$ and $\downarrow_H$ are adjoint with respect to the inner products on $\C[H^{\perp}/H]$ and $\C[D]$ and commute with the Weil representations $\rho_{H^{\perp}/H}$ and $\rho_D$. In particular they map modular forms to modular forms. They also commute with $F_{f,v}$ as maps in $v$ on $\C[H^{\perp}/H]$ and $\C[D]$ respectively.
\end{prp}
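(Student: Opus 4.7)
The plan is to verify each of the four assertions directly from the definitions on basis elements, exploiting throughout that $H$ is isotropic and $H \subseteq H^\bot$.

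For adjointness, I would evaluate on basis elements: for $\gamma+H \in H^\bot/H$ and $\beta \in D$,
\[
\langle \uparrow_H\!(e^{\gamma+H}), e^\beta\rangle = \sum_{\mu\in H}\langle e^{\gamma+\mu}, e^\beta\rangle
\]
equals $1$ if $\beta \in \gamma+H$ and $0$ otherwise; and $\langle e^{\gamma+H}, \downarrow_H\!(e^\beta)\rangle$ equals $1$ if $\beta \in H^\bot$ and $\beta+H = \gamma+H$, and $0$ otherwise. These agree because $\beta \in \gamma+H \subseteq H^\bot$.

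Next, to show that $\uparrow_H$ intertwines $\rho_{H^\bot/H}$ and $\rho_D$, it suffices to check on the generators $S,T$ of $\Mp$. For $T$, the identity is immediate from $\q(\gamma+\mu) \equiv \q(\gamma) \pmod{1}$ whenever $\gamma\in H^\bot$ and $\mu\in H$. For $S$, applying $\rho_D(S)$ to $\uparrow_H\!(e^{\gamma+H})$ produces a double sum whose inner piece $\sum_{\mu\in H} e(-(\mu,\beta))$ is the standard character sum, equal to $|H|\cdot \mathbf{1}_{\beta\in H^\bot}$; this collapses the expression to a single sum over $H^\bot$. Comparing with $\uparrow_H\!(\rho_{H^\bot/H}(S)e^{\gamma+H})$, the factor $|H|$ produced by the character sum matches the change of normalization $1/\sqrt{|H^\bot/H|} = |H|/\sqrt{|D|}$, and the signature prefactors agree because $\sign(H^\bot/H) = \sign(D)$. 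The corresponding statement for $\downarrow_H$ then follows either by an analogous direct computation (the character sum now killing terms where $\gamma\notin H^\bot$) or, more economically, by taking adjoints of the already established intertwining for $\uparrow_H$, using the adjointness from the first step together with unitarity of the Weil representations.

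The remaining assertions are immediate. Both $\uparrow_H$ and $\downarrow_H$ are $\C$-linear and act componentwise on Fourier expansions, so they preserve holomorphy on $\mathbb{H}$ and at the cusp; combined with the intertwining property this shows they send $\mathrm{M}_\ka$ to $\mathrm{M}_\ka$. Commutativity with $F_{f,v}$ as a map in $v$ is then purely formal: since $\uparrow_H$ is linear and commutes with each $\rho_D(M,\phi)^{-1}$, it can be pulled past the averaging sum defining $F_{f,v}$, and likewise for $\downarrow_H$. No real obstacle arises; the only step demanding any care is the bookkeeping with the factor $|H|$ produced by the character sum and its cancellation against the normalization of $\rho_{H^\bot/H}(S)$, together with the use of $\sign(H^\bot/H) = \sign(D)$.
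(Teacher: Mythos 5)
Your proof is correct: the direct verification on basis elements (adjointness, the $T$- and $S$-intertwining via the character sum $\sum_{\mu\in H}e(-(\mu,\beta))=|H|\cdot\mathbf{1}_{\beta\in H^\perp}$ cancelling against $\sqrt{|H^\perp/H|}=\sqrt{|D|}/|H|$, and $\sign(H^\perp/H)=\sign(D)$) is exactly the standard argument; the paper itself gives no proof but defers to \cite{MS}, where the same computation is carried out. Your handling of $\downarrow_H$ by adjointness and unitarity, and of the statements about modular forms and $F_{f,v}$ as formal consequences of linearity and intertwining, is likewise the intended route.
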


The isotropic lift is transitive.

\begin{prp}\label{prp:liftIsTransitive}
Let $D$ be a discriminant form of even signature and $H \subset K$ isotropic subgroups of $D$. Then 
$H \subset K \subset K^{\perp} \subset H^{\perp}$ and $K/H$ is an isotropic subgroup of $H^{\perp}/H$ with orthogonal complement $K^{\perp}/H$. Moreover
\begin{align*}
    \uparrow_H^D \, \circ \, \uparrow_{K/H}^{H^{\perp}/H} \quad &= \quad \uparrow_K^D \text{ and} \\
    \downarrow_{K/H}^{H^{\perp}/H} \, \circ \, \downarrow_H^D \quad &= \quad \downarrow_K^D.
\end{align*}
\end{prp}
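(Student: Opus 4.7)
The plan is to verify everything on the standard basis vectors $e^\gamma$, after first disentangling the various quotient identifications.

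First I would establish the chain of inclusions and identify the relevant subquotient. Since $K$ is isotropic, $K\subset K^\perp$; since $H\subset K$, dualising reverses the inclusion to give $K^\perp\subset H^\perp$, so we have $H\subset K\subset K^\perp\subset H^\perp$ as claimed. The quadratic form on $H^\perp/H$ is $\q(\gamma+H)=\q(\gamma)$ (well-defined because $H$ is isotropic and $\gamma\in H^\perp$), so elements of $K$, which are isotropic in $D$, remain isotropic modulo $H$; hence $K/H$ is an isotropic subgroup of $H^\perp/H$. The bilinear form on $H^\perp/H$ is similarly inherited, so $\gamma+H\in (K/H)^\perp$ iff $(\gamma,\kappa)=0$ for all $\kappa\in K$, i.e.\ iff $\gamma\in K^\perp$; this gives $(K/H)^\perp=K^\perp/H$. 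Finally the second isomorphism theorem gives the canonical identification $(K^\perp/H)/(K/H)\cong K^\perp/K$, under which I will freely identify $e^{\gamma+K}$ with $e^{(\gamma+H)+(K/H)}$.

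Next I verify the lift identity on a basis element. Let $\gamma\in K^\perp$. Applying the definition of $\uparrow_{K/H}^{H^\perp/H}$ to $e^{(\gamma+H)+(K/H)}$ gives a sum over coset representatives $\mu+H\in K/H$ of $e^{(\gamma+\mu)+H}$. Applying $\uparrow_H^D$ term by term produces
\begin{equation*}
\sum_{\mu+H\in K/H}\sum_{h\in H}e^{\gamma+\mu+h}.
\end{equation*}
As $\mu$ ranges over a transversal of $H$ in $K$ and $h$ over $H$, the element $\mu+h$ ranges bijectively over $K$, so the double sum collapses to $\sum_{\kappa\in K}e^{\gamma+\kappa}=\uparrow_K^D(e^{\gamma+K})$.

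For the descent, I would similarly chase an arbitrary $e^\gamma\in\C[D]$. The composition $\downarrow_{K/H}^{H^\perp/H}\circ\downarrow_H^D$ sends $e^\gamma$ to $e^{(\gamma+H)+(K/H)}$ when $\gamma\in H^\perp$ and $\gamma+H\in (K/H)^\perp=K^\perp/H$, and to $0$ otherwise. Under the identification above this is $e^{\gamma+K}$ exactly when $\gamma\in K^\perp$ (the extra hypothesis $\gamma\in H^\perp$ being automatic from $K^\perp\subset H^\perp$), which matches $\downarrow_K^D(e^\gamma)$. Since both sides are linear, this suffices.

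There is no real obstacle beyond bookkeeping; the only thing to be careful about is the identification of $(K/H)^\perp/(K/H)$ with $K^\perp/K$, which makes the two sides of the lift identity live in the same space and makes the sum-rearrangement step legal.
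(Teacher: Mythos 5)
Your proof is correct, and it is the standard direct verification (the paper itself states this proposition without proof, referring to \cite{MS}, where the argument is exactly this kind of check on basis vectors). The coset bookkeeping is handled properly: the identification $(K^\perp/H)/(K/H)\cong K^\perp/K$, the bijection $K\ni\kappa=\mu+h$ with $\mu$ a transversal element and $h\in H$ in the lift computation, and the observation that $K^\perp\subset H^\perp$ makes the extra condition in the descent automatic are precisely the points that need to be said, so nothing is missing.
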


Let $\mathcal{H}$ be the set of all non-trivial isotropic subgroups $H$ of $D$. To simplify the notation we introduce the following subsets of $\C[D]$:
\begin{align*}
	\image(\uparrow) &= \sum_{H\in\mathcal{H}}\image(\uparrow_H)\quad \text{and} \\
	\ker(\downarrow) &= \bigcap_{H\in\mathcal{H}}\ker(\downarrow_H).
\end{align*}
Because $\uparrow_H$ and $\downarrow_H$ are adjoint we find that
\begin{align}\label{eq:imageIBot}
    \begin{split}
    \image(\uparrow)^\bot &= \ker(\downarrow)\quad \text{and} \\
    \image(\uparrow) &= \ker(\downarrow)^\bot
    \end{split}
\end{align}
with respect to the inner product on $\C[D]$.

\section{Reduction to lifting pointwise}
In this section we show that a modular form $F$ is a linear combination of isotropically lifted modular forms if and only if for every $\tau$ on the upper half plane $F(\tau)$ is a linear combination of lifts. In Theorem \ref{thm:MainThmNeedsAllCD} we will argue further that for a discriminant form $D$ all modular forms are linear combinations of isotropically lifted modular forms if and only if all of $\C[D]$ is in the image of the lifts.
\begin{prp}\label{prp:Fdecomposes}
Let $F\in\mathrm{M}_\ka(D)$ and let $V = \spann_{\tau\in\mathbb{H}}(F(\tau))\subset \C[D]$ and $(v_i)_{i\in I}$ an orthonormal basis of $V$ with respect to $\langle\cdot,\cdot\rangle$. Then there exist $(f_i)_{i\in I}$ with $f_i\in\mathrm{M}_\ka(\mathrm{Mp}_2(N))$ such that
\begin{equation*}
    F = \sum_{i\in I}f_iv_i.
\end{equation*}
In fact $f_i(\tau) = \langle F(\tau),v_i\rangle$.
\end{prp}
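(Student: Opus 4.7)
The plan is to define $f_i$ by the formula the proposition suggests, $f_i(\tau) := \langle F(\tau),v_i\rangle$, and then verify the three modularity conditions directly. The decomposition $F(\tau)=\sum_i f_i(\tau)v_i$ is then immediate: by hypothesis $F(\tau)\in V$ for every $\tau$, and $(v_i)_{i\in I}$ is an orthonormal basis of the finite-dimensional space $V\subset\C[D]$, so $F(\tau)$ is its own Fourier expansion in that basis. Note that $I$ is automatically finite since $\dim V\leq|D|$.

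For holomorphy on $\mathbb{H}$, observe that the inner product is linear in the first slot, so $\langle\,\cdot\,,v_i\rangle$ is a $\C$-linear functional $\C[D]\to\C$. Hence $f_i$ is a $\C$-linear combination of the component functions $\tau\mapsto\langle F(\tau),e^\gamma\rangle$, each of which is holomorphic because $F$ is.

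For the transformation law, recall from Section \ref{sec:Definitions} that $\mathrm{Mp}_2(N)$ is by definition the subgroup of $\Mp$ that acts trivially in $\rho_D$. Thus for $(M,\phi)\in\mathrm{Mp}_2(N)$ we have $F|_\ka(M,\phi)=\rho_D(M,\phi)F=F$. Since the slash operator only multiplies the vector $F(M\tau)$ by the scalar $\phi(\tau)^{-2\ka}$, and the inner product is linear in the first slot,
\begin{equation*}
 f_i|_\ka(M,\phi)(\tau)
 = \phi(\tau)^{-2\ka}\langle F(M\tau),v_i\rangle
 = \langle F|_\ka(M,\phi)(\tau),v_i\rangle
 = \langle F(\tau),v_i\rangle
 = f_i(\tau),
\end{equation*}
so $f_i$ is invariant under $|_\ka$ by $\mathrm{Mp}_2(N)$.

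The only mildly non-routine step is holomorphy at the cusps of $\mathrm{Mp}_2(N)$. For arbitrary $(M,\phi)\in\Mp$, using that $\rho_D$ is unitary,
\begin{equation*}
 f_i|_\ka(M,\phi)(\tau)
 = \langle \rho_D(M,\phi)F(\tau),v_i\rangle
 = \langle F(\tau),\rho_D(M,\phi)^{-1}v_i\rangle.
\end{equation*}
Expanding $\rho_D(M,\phi)^{-1}v_i=\sum_{\gamma\in D}c_\gamma\,e^\gamma$ turns this into a finite $\C$-linear combination of the component functions $\langle F(\tau),e^\gamma\rangle=\sum_{n\geq 0}c(\gamma,n)e(n\tau)$, which by condition (iii) on $F$ have Fourier expansions supported on non-negative exponents. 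Hence $f_i|_\ka(M,\phi)$ is holomorphic at $\infty$ for every $(M,\phi)\in\Mp$, which means $f_i$ is holomorphic at every cusp of $\mathrm{Mp}_2(N)$. Combining the three steps, $f_i\in\mathrm{M}_\ka(\mathrm{Mp}_2(N))$, and the claimed decomposition holds. I do not anticipate any real obstacle; the argument is essentially bookkeeping with the definitions, with the only point to watch being the translation of the vector-valued cusp condition into the scalar one via the unitary operator $\rho_D(M,\phi)^{-1}$.
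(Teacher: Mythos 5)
Your proposal is correct and follows essentially the same route as the paper: define $f_i(\tau)=\langle F(\tau),v_i\rangle$, use the pointwise expansion in the orthonormal basis, and deduce the $\mathrm{Mp}_2(N)$-invariance and the holomorphy conditions from $f_i|_\ka(M,\phi)=\langle F(\tau),\rho_D(M,\phi)^{-1}v_i\rangle$ together with the modularity and holomorphy of $F$. Your treatment of holomorphy at the cusps merely spells out in components what the paper states more briefly.
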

\begin{proof}
Since for all $\tau\in\mathbb{H}$ the point $F(\tau)$ is in $V$ and $(v_i)_{i\in I}$ is a orthonormal basis of $V$, the equality
\begin{equation*}
    \sum_{i\in I}\langle F(\tau),v_i\rangle v_i = F(\tau)
\end{equation*}
holds pointwise for all $\tau\in\mathbb{H}$. Hence, we only need to show that $f_i:=\langle F,v_i\rangle\in\mathrm{M}_\ka(\mathrm{Mp}_2(N))$: For any $(M,\phi)\in\Mp$ we have
\begin{align}\label{eq:rho_DSlash}
\begin{split}
    f_i|_\ka(M,\phi) &= \langle F(\tau)|_\ka(M,\phi),v_i\rangle \\
    &= \langle \rho_D(M,\phi)F(\tau),v_i\rangle \\
    &= \langle F(\tau),\rho_D(M,\phi)^{-1}v_i\rangle,
\end{split}
\end{align}
so if $(M,\phi)\in\mathrm{Mp}_2(N)$, then $\rho_D(M,\phi)^{-1}$ acts trivially on $v_i$ and $f_i$ is invariant under $\mathrm{Mp}_2(N)$. The holomorphicity of $F$ on $\mathbb{H}$ and at $\infty$ together with (\ref{eq:rho_DSlash}) implies holomorphicity of $f_i$ on $\mathbb{H}$ and at the cusps.
\end{proof}
Now we can show that we can restrict to pointwise lifting.
\begin{prp}\label{prp:iTildeIsI}
Let $D$ be a discriminant form and let $F\in \mathrm{M}_\ka(D)$. Then $F$ is a linear combination of isotropically lifted modular forms if and only if $F(\tau)\in\image(\uparrow)$ for all $\tau\in\mathbb{H}$.
\end{prp}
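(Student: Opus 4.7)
The ``only if'' direction is immediate, since $\uparrow_H$ is a pointwise linear map: if $F = \sum_{0\neq H} \uparrow_H(F_H)$ with $F_H \in \mathrm{M}_\ka(H^\bot/H)$, then $F(\tau) = \sum_H \uparrow_H(F_H(\tau)) \in \image(\uparrow)$ for every $\tau$.

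For the converse, suppose $F(\tau) \in \image(\uparrow)$ for all $\tau \in \mathbb{H}$. I would apply Proposition \ref{prp:Fdecomposes} to get an orthonormal basis $(v_i)_{i \in I}$ of $V = \spann_{\tau}(F(\tau))$ and scalar forms $f_i = \langle F, v_i\rangle \in \mathrm{M}_\ka(\mathrm{Mp}_2(N))$ with $F = \sum_i f_i v_i$ pointwise. By hypothesis every $F(\tau)$ lies in $\image(\uparrow)$, hence so does $V$, and each basis vector decomposes as $v_i = \sum_{H \in \mathcal{H}} \uparrow_H(w_{i,H})$ with $w_{i,H} \in \C[H^\bot/H]$.

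The main obstacle is that the individual products $f_i v_i$ are not modular for $\rho_D$ (the functions $f_i$ are only invariant under $\mathrm{Mp}_2(N)$), so the decomposition of $v_i$ cannot be transplanted to $f_i v_i$ directly. The idea is to upgrade $f_i v_i$ to the averaged form $F_{f_i, v_i} \in \mathrm{M}_\ka(D)$. Using (\ref{eq:rho_DSlash}) we have $f_i|_\ka(M,\phi)(\tau) = \langle F(\tau), \rho_D(M,\phi)^{-1}v_i\rangle$, and because $V$ is $\Mp$-invariant and $\rho_D$ is unitary, $(\rho_D(M,\phi)^{-1}v_i)_i$ is again an orthonormal basis of $V$ for every $(M,\phi)\in\Mp$. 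Hence
\begin{align*}
    \sum_{i\in I} F_{f_i, v_i}(\tau)
    &= \frac{1}{|\mathrm{Mp}_2(N)\backslash\Mp|}\sumstack{(M,\phi)\in \\ \mathrm{Mp}_2(N)\backslash\Mp}\sum_i \langle F(\tau), \rho_D(M,\phi)^{-1}v_i\rangle\, \rho_D(M,\phi)^{-1}v_i \\
    &= F(\tau),
\end{align*}
since the inner sum is Parseval's identity for $F(\tau) \in V$ expressed in the basis $(\rho_D(M,\phi)^{-1}v_i)_i$. Thus $F = \sum_i F_{f_i, v_i}$ as an identity of modular forms in $\mathrm{M}_\ka(D)$.

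To finish, I would use that $\uparrow_H$ commutes with $F_{f,v}$ as a map in $v$: the decomposition of $v_i$ gives
\[ F_{f_i, v_i} = \sum_{H \in \mathcal{H}} F_{f_i, \uparrow_H(w_{i,H})} = \sum_{H \in \mathcal{H}} \uparrow_H\bigl(F_{f_i, w_{i,H}}^{H^\bot/H}\bigr), \]
where each $F_{f_i, w_{i,H}}^{H^\bot/H}$ lies in $\mathrm{M}_\ka(H^\bot/H)$. Substituting back and regrouping the double sum by $H$ exhibits $F$ as a linear combination of isotropically lifted modular forms, completing the proof.
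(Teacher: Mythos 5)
Your argument is correct and is essentially the paper's proof: both decompose $F=\sum_i f_iv_i$ via Proposition \ref{prp:Fdecomposes}, identify $F=\sum_i F_{f_i,v_i}$, write each $v_i$ as a sum of lifts, and pull $\uparrow_H$ out of the average using that $\uparrow_H$ commutes with $F_{f,v}$ in $v$ (equivalently, with $\rho_D(M,\phi)^{-1}$). The only cosmetic difference is how the identity $F=\sum_i F_{f_i,v_i}$ is reached: you use Parseval for the rotated orthonormal basis $(\rho_D(M,\phi)^{-1}v_i)_i$ of $V$ (legitimate, since $\rho_D(M,\phi)F(\tau)=\phi(\tau)^{-2\ka}F(M\tau)$ shows $V$ is $\rho_D$-invariant and $\rho_D$ is unitary), whereas the paper simply substitutes the pointwise decomposition into the tautological average $F=|\mathrm{Mp}_2(N)\backslash\Mp|^{-1}\sum_{(M,\phi)}\rho_D(M,\phi)^{-1}F|_\ka(M,\phi)$.
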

\begin{proof}
If $F$ is a linear combination of isotropically lifted modular forms, then it is clear that $F(\tau)\in\image(\uparrow)$. So let us assume that $F(\tau)\in\image(\uparrow)$ holds for all $\tau\in\mathbb{H}$. By Proposition \ref{prp:Fdecomposes} we can write
\begin{equation}\label{eq:FInBasis}
    F = \sum_{i\in I}f_iv_i,
\end{equation}
where $f_i\in\mathrm{M}_\ka(\mathrm{Mp}_2(N))$ and $(v_i)_{i\in I}$ is a basis of $V = \spann_{\tau\in\mathbb{H}}(F(\tau))$. But by assumption $V\subset\image(\uparrow)$. So we can write
\begin{equation}\label{eq:vAsLifts}
    v_i = \sum_{H\in\mathcal{H}}\uparrow_H(v_{i,H})
\end{equation}
for suitable $v_{i,H}\in\C[H^\bot/H]$. Since $F$ is a modular form for $\rho_D$ we have
\begin{align*}
    F &= \frac{1}{|\mathrm{Mp}_2(N)\backslash\Mp|}\sum_{(M,\phi)\in\mathrm{Mp}_2(N)\backslash\Mp}\rho_D(M,\phi)^{-1}F|_\ka(M,\phi).
\end{align*}
For better readability we denote $A := |\mathrm{Mp}_2(N)\backslash\Mp|$. Applying (\ref{eq:FInBasis}) and (\ref{eq:vAsLifts}) we get
\begin{align*}
    F &= A^{-1}\cdot\sum_{(M,\phi)}\sum_{i\in I}f_i|_\ka(M,\phi)\rho_D(M,\phi)^{-1}v_i \\
    &= A^{-1}\cdot\sum_{(M,\phi)}\sum_{i\in I}f_i|_\ka(M,\phi)\sum_{H\in\mathcal{H}}\rho_D(M,\phi)^{-1}\uparrow_H(v_{i,H}) \\
    &= \sum_{H\in\mathcal{H}}\uparrow_H\left(A^{-1}\cdot\sum_{(M,\phi)}\sum_{i\in I}f_i|_\ka(M,\phi)\rho_{H^\bot/H}(M,\phi)^{-1}v_{i,H}\right),
\end{align*}
where
\begin{align*}
    A^{-1}\cdot\sum_{(M,\phi)}\sum_{i\in I}f_i|_\ka(M,\phi)\rho_{H^\bot/H}(M,\phi)^{-1}v_{i,H} = \sum_{i\in I}F_{f_i,v_{i,H}}
\end{align*}
is a modular form for $\rho_{H^\bot/H}$.
\end{proof}
We remark that \cite[Theorem 115]{Wr} is equivalent to Proposition \ref{prp:iTildeIsI}, however our argument is shorter.

Clearly if $\image(\uparrow) = \C[D]$ then all $F\in\mathrm{M}_\ka(D)$ are linear combinations of isotropic lifts. We want to argue that the other direction holds as well.
\begin{prp}\label{prp:allVectorsAreNeeded}
Let $D$ be a discriminant form. Then
\begin{equation*}
    \spann\{F(\tau)\mid F\in\mathrm{M}_\ka(D),\ \tau\in\mathbb{H},\ \ka\in\textstyle{\frac{1}{2}}\Z\} = \C[D].
\end{equation*}
\end{prp}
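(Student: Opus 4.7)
The plan is to verify that $V := \spann\{F(\tau) \mid F \in \mathrm{M}_\ka(D),\ \tau \in \mathbb{H},\ \ka \in \textstyle{\frac{1}{2}}\Z\}$ is $\rho_D$-invariant and then derive a contradiction from the assumption $V \subsetneq \C[D]$ by combining this with the standard fact, recalled in the introduction, that every nonzero subrepresentation of $\rho_D$ contains a nonzero modular form of sufficiently large weight.

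Invariance is immediate from the transformation law: for any $F \in \mathrm{M}_\ka(D)$ and any $(M,\phi) \in \Mp$ one has $\rho_D(M,\phi) F(\tau) = \phi(\tau)^{-2\ka} F(M\tau) \in V$. Since $\rho_D$ is unitary with respect to $\langle \cdot,\cdot\rangle$, the orthogonal complement $V^\perp$ is $\Mp$-invariant as well. Suppose for contradiction $V \subsetneq \C[D]$, so that $V^\perp$ is a nonzero subrepresentation of $\rho_D$. Applying the quoted fact to $V^\perp$ produces a weight $\ka$ and a nonzero modular form $F \in \mathrm{M}_\ka(D)$ with $F(\tau) \in V^\perp$ for every $\tau \in \mathbb{H}$. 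But by the very definition of $V$, each $F(\tau)$ lies in $V$ as well, so $F(\tau) \in V \cap V^\perp = \{0\}$ for all $\tau$, contradicting $F \neq 0$. Hence $V = \C[D]$.

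The only non-formal ingredient is the existence statement: every nonzero $\rho_D$-invariant subspace $W \subset \C[D]$ contains a nonzero modular form in some sufficiently large weight. This follows from classical Poincar\'e series theory: since $\rho_D(T)$ has finite order, $W$ contains a $T$-eigenvector $v$ with $\rho_D(T) v = e(a) v$ for some $a \in \Q$, and for any $n \in a + \Z_{>0}$ and any $\ka > 2$ the Poincar\'e series with principal part $e(n\tau)\,v$ converges absolutely, defines a nonzero modular form in $\mathrm{M}_\ka(D)$, and takes values in the $\Mp$-invariant subspace $W$. This is really the main obstacle in the proof; everything else is a formal manipulation with the transformation law and the orthogonal decomposition $\C[D] = V \oplus V^\perp$.
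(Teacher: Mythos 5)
Your argument is essentially the paper's own proof: the paper also forms $W=V^\perp$, uses unitarity of $\rho_D$ to see $W$ is a subrepresentation, and invokes the existence of a nonzero modular form of sufficiently large weight valued in any nonzero subrepresentation (cited there to Knopp--Mason for integral weight and Candelori--Franc--Kopp, Proposition 3.3, in general) to reach the same contradiction $F(\tau)\in V\cap V^\perp$. Your closing Poincar\'e series sketch of that cited fact is fine in spirit, but ``any $\ka>2$'' is slightly too strong: the series vanishes identically unless $2\ka$ is compatible with $\sign(D)$ modulo the action of the center $(I,-1)$, and nonvanishing is guaranteed only for $\ka$ sufficiently large --- which is all the statement requires.
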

\begin{proof}
Define $V:=\spann\{F(\tau)\mid F\in\mathrm{M}_\ka(D),\ \tau\in\mathbb{H},\ \ka\in\frac{1}{2}\Z\}$ and suppose that $V\subsetneq\C[D]$. Then $W:=V^\bot$ is non-trivial. Since $V$ is invariant under $\rho_D$ and the Weil representation is a unitary action for the scalar product, also $W$ is invariant under $\rho_D$. Therefore, $(W,\rho_D)$ is a representation of $\Mp$ with $W\not=\{0\}$. For $\ka$ large enough there exists a non-zero modular form $F$ for $(W,\rho_D)$ (This was shown in \cite[section 3]{KM} for the integral weight case, the general case is treated in \cite[Proposition 3.3]{CFK}). But then also $F\in\mathrm{M}_\ka(D)$ and $F(\tau)\in W$ for all $\tau\in\mathbb{H}$, which contradicts the definition of $W$.
\end{proof}
We get
\begin{thm}\label{thm:MainThmNeedsAllCD}
Let $D$ be a discriminant form. Then all modular forms for the Weil representation 
are linear combinations of isotropically lifted modular forms if and only if $\image(\uparrow) = \C[D]$.
\end{thm}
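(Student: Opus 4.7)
The theorem is essentially a synthesis of the two previous results, so my plan is to assemble them rather than to do anything new.

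For the easy direction, assume $\image(\uparrow) = \C[D]$. Then for any modular form $F \in \mathrm{M}_\ka(D)$ and any $\tau \in \mathbb{H}$, trivially $F(\tau) \in \C[D] = \image(\uparrow)$. By Proposition \ref{prp:iTildeIsI} this already forces $F$ to be a linear combination of isotropically lifted modular forms, so every element of $\mathrm{M}_\ka(D)$ has the desired form.

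For the converse, suppose every modular form (in every weight) is a linear combination of isotropic lifts. Fix an arbitrary $F \in \mathrm{M}_\ka(D)$ and an arbitrary $\tau \in \mathbb{H}$. By assumption $F$ is a linear combination of isotropic lifts, and then the trivial direction of Proposition \ref{prp:iTildeIsI} gives $F(\tau) \in \image(\uparrow)$. Since this holds for all $F$ and all $\tau$, we conclude
\[
\spann\{F(\tau) \mid F \in \mathrm{M}_\ka(D),\ \tau \in \mathbb{H},\ \ka \in \textstyle\frac{1}{2}\Z\} \subset \image(\uparrow).
\]
But the left-hand side equals $\C[D]$ by Proposition \ref{prp:allVectorsAreNeeded}, so $\image(\uparrow) = \C[D]$.

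There is no real obstacle here: both directions are one-line applications of results already established. The only subtlety worth flagging is that the converse direction genuinely needs the hypothesis to hold across all weights simultaneously (Proposition \ref{prp:allVectorsAreNeeded} uses that in some sufficiently large weight every nonzero subrepresentation of $\rho_D$ supports a nonzero modular form); if one restricted the hypothesis to a single weight $\ka$ the argument would break down, which is consistent with the paper's remark that the list of problematic discriminant forms becomes harder to determine if a weight is fixed.
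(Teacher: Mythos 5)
Your proof is correct and uses exactly the paper's two ingredients, Propositions \ref{prp:iTildeIsI} and \ref{prp:allVectorsAreNeeded}; the only difference is cosmetic, namely that you prove the converse directly (values of all modular forms span $\C[D]$ and lie in $\image(\uparrow)$, hence $\image(\uparrow)=\C[D]$), whereas the paper argues contrapositively by producing a non-trivial modular form with values in $\image(\uparrow)^\bot$. Your observation that the hypothesis must range over all weights $\ka\in\frac{1}{2}\Z$ is also in line with the paper's use of these results.
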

\begin{proof}
By Proposition \ref{prp:iTildeIsI}, $\image(\uparrow)=\C[D]$ implies that all modular forms for the Weil representation are linear combinations of isotropically lifted modular forms. If $\image(\uparrow)\subsetneq\C[D]$ then by Proposition \ref{prp:allVectorsAreNeeded} there exists some non-trivial modular form on $\image(\uparrow)^\bot$. By Proposition \ref{prp:iTildeIsI} this can not be a linear combination of isotropically lifted modular forms.
\end{proof}

We remark that the line of reasoning in this section can also be applied to other finite groups and representations (Recall that the Weil representation of $\SL$ descends to a representation of the finite group $\mathrm{Mp}_2(N)\backslash\Mp$).

\section{Image of the lifts}
In this section we want to investigate when $\image(\uparrow)$ contains all of $\C[D]$. We will show under what conditions we can write $e^\gamma$ as a linear combination of lifts for any $\gamma\in D$. Since $D$ and also any isotropic subgroup of $D$ decomposes into its $p$-subgroups we can restrict ourselves to the case where the level of $D$ is a power of a prime $p$. Therefore, in this section $p$ is some fixed prime and $D$ will always be a discriminant form of level a power of $p$.

The following lemma combines Lemmas 120 and 121 in \cite{Wr} and was also proved in \cite[Proposition 7.1]{MS}.
\begin{lem}\label{lem:eGammaInImage}
Let $\gamma \in D$. Suppose that $\gamma^{\bot}$ contains an isotropic subgroup $H$ isomorphic to $(\Z/p\Z)^2$. Then $e^{\gamma}\in\image(\uparrow)$.
\end{lem}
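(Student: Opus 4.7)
The plan is to write $e^{\gamma}$ explicitly as a rational linear combination of isotropic lifts from the $p+1$ subgroups of $H$ of order $p$ together with the lift from $H$ itself. The key combinatorial observation is that $H\cong(\Z/p\Z)^{2}$ is a two-dimensional vector space over $\mathbb{F}_{p}$, so it has exactly $p+1$ one-dimensional subspaces $K_{1},\dots,K_{p+1}$, and these partition $H\setminus\{0\}$ into $p+1$ disjoint sets of size $p-1$.

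First I would check that the relevant lifts are defined: each $K_{i}$ is isotropic as a subgroup of the isotropic group $H$, and $\gamma\in K_{i}^{\perp}$ because $K_{i}\subset H\subset\gamma^{\perp}$. Hence $\uparrow_{K_{i}}\!(e^{\gamma+K_{i}})$ is a well-defined element of $\image(\uparrow)$, as is $\uparrow_{H}\!(e^{\gamma+H})$.

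Next I would compute, directly from the definition of the lift,
\[
\sum_{i=1}^{p+1}\uparrow_{K_{i}}\!(e^{\gamma+K_{i}})=\sum_{i=1}^{p+1}\sum_{\mu\in K_{i}}e^{\gamma+\mu}=(p+1)\,e^{\gamma}+\sum_{0\neq\mu\in H}e^{\gamma+\mu}=p\,e^{\gamma}+\uparrow_{H}\!(e^{\gamma+H}),
\]
where the middle equality uses that $0\in H$ lies in all $p+1$ of the $K_{i}$ while each nonzero $\mu\in H$ lies in the unique line $\mathbb{F}_{p}\mu$. Rearranging yields
\[
e^{\gamma}=\frac{1}{p}\Bigl(\sum_{i=1}^{p+1}\uparrow_{K_{i}}\!(e^{\gamma+K_{i}})-\uparrow_{H}\!(e^{\gamma+H})\Bigr)\in\image(\uparrow).
\]

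I do not expect any real obstacle here: the entire argument reduces to the combinatorial fact that the $p+1$ lines through the origin of the plane $(\Z/p\Z)^{2}$ partition its nonzero vectors, combined with the explicit formula for $\uparrow_{K}$. Division by $p$ is harmless since we work over $\C$, and the argument is insensitive to whether $p$ is odd or $p=2$.
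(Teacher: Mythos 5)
Your argument is correct: each line $K_i$ and the group $H$ itself are non-trivial isotropic subgroups with $\gamma$ in their orthogonal complements, so all the lifts you use are legitimate, and the counting (the $p+1$ lines cover $0$ with multiplicity $p+1$ and each non-zero element of $H$ exactly once) gives $p\,e^{\gamma}=\sum_{i=1}^{p+1}\uparrow_{K_i}(e^{\gamma+K_i})-\uparrow_{H}(e^{\gamma+H})$, hence $e^{\gamma}\in\image(\uparrow)$; the argument indeed works for $p=2$ as well. Note that the paper does not prove this lemma internally but refers to \cite[Lemmas 120, 121]{Wr} and \cite[Proposition 7.1]{MS}, so your self-contained proof is a welcome addition rather than a duplicate. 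The only friction is with the remark preceding Lemma \ref{lem:iPrimeIsI}, where the paper states that the construction behind Lemma \ref{lem:eGammaInImage} uses only subgroups of order $p$, whereas your combination also invokes the order-$p^2$ lift $\uparrow_H$. This is harmless for the statement as such (membership in $\image(\uparrow)$ permits any non-trivial isotropic subgroup, and Lemma \ref{lem:iPrimeIsI} is proved independently via adjointness and transitivity of the descents), but if you want to match that remark you can eliminate the $H$-lift by observing that $\uparrow_{H}(e^{\gamma+H})=\uparrow_{K_1}\bigl(\sum_{\nu\in H/K_1}e^{\gamma+\nu+K_1}\bigr)$, since $H\subset K_1^{\bot}$ and the cosets $\nu+K_1$ with $\nu$ running over representatives of $H/K_1$ exactly cover $H$; with this substitution your identity expresses $e^{\gamma}$ using lifts from order-$p$ subgroups only.
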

A simple condition on $D$ that ensures that the condition of Lemma \ref{lem:eGammaInImage} is satisfied for all $\gamma\in D$ is given in
\begin{lem}\label{lem:eGammaInImageHoch3}
If there exists an isotropic subgroup $H\subset D$ isomorphic to $(\Z/p\Z)^3$ then for all $\gamma\in D$ the group $\gamma^\bot$ contains an isotropic subgroup $H\subset D$ isomorphic to $(\Z/p\Z)^2$.
\end{lem}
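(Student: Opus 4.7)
The plan is to use the isotropic subgroup $H\cong(\Z/p\Z)^3$ as a three-dimensional $\F_p$-vector space on which pairing with $\gamma$ is a linear functional, so a codimension-one kernel (hence dimension at least two) drops out automatically.

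More precisely, fix $\gamma\in D$ and consider the map
\[
\phi_\gamma : H \to \Q/\Z, \qquad h\mapsto (\gamma,h).
\]
Since $H\cong(\Z/p\Z)^3$ has exponent $p$, for every $h\in H$ we have $p\,\phi_\gamma(h)=\phi_\gamma(ph)=0$, so $\phi_\gamma$ takes values in the subgroup $\tfrac{1}{p}\Z/\Z\cong\F_p$. The bilinearity of $(\cdot,\cdot)$ then makes $\phi_\gamma$ an $\F_p$-linear map from a three-dimensional $\F_p$-vector space to a one-dimensional $\F_p$-vector space.

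By rank-nullity the kernel $K:=\ker(\phi_\gamma)=H\cap\gamma^\bot$ has $\F_p$-dimension at least $2$. Pick any two-dimensional $\F_p$-subspace $K_0\subset K$; then $K_0\cong(\Z/p\Z)^2$ as a group. Because $K_0\subset H$ and $H$ is isotropic, $K_0$ is isotropic as well, and $K_0\subset\gamma^\bot$ by construction. This produces the desired isotropic subgroup inside $\gamma^\bot$.

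There is no real obstacle here: the only point worth flagging is that one must observe at the outset that $H$ is actually an $\F_p$-vector space (because it has exponent $p$) and that the pairing with $\gamma$ lands in $\F_p$, so that linear-algebraic dimension counting over $\F_p$ applies rather than a coarser index estimate in $D$.
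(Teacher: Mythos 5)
Your proof is correct and is essentially the paper's argument in more abstract clothing: the paper also exploits that $(\gamma,\cdot)$ restricted to $H$ takes values in $\frac{1}{p}\Z/\Z$, and its explicit choice of $x,y$ with $\alpha+x\mu,\beta+y\mu\perp\gamma$ is just a hands-on construction of a two-dimensional subspace of the kernel that your rank-nullity argument produces directly. No gaps; your observation that a subgroup of an isotropic group is isotropic and that a $2$-dimensional $\mathbb{F}_p$-subspace is isomorphic to $(\Z/p\Z)^2$ completes the proof exactly as in the paper.
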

\begin{proof}
Let $\gamma\in D$ and $H\subset D$ be an isotropic subgroup isomorphic to $(\Z/p\Z)^3$. If $\gamma\in H^\bot$, take any subgroup of $H$ isomorphic to $(\Z/p\Z)^2$. If $\gamma\not\in H^\bot$, we can choose generators $\alpha$, $\beta$, $\mu$ of $H$ such that $(\gamma,\mu)\not=0\bmod1$. Since $\alpha$, $\beta$ and $\mu$ are of order $p$ it follows that $(\alpha,\gamma)$, $(\beta,\gamma)$ and $(\mu,\gamma)$ take values in $\frac{1}{p}\Z$. This implies that we can choose $x,y\in\Z$ such that
\begin{align*}
    (\alpha + x\mu,\gamma) = (\alpha,\gamma) + x(\mu,\gamma) &= 0\mod1\text{ and} \\
    (\beta + y\mu,\gamma) = (\beta,\gamma) + y(\mu,\gamma) &= 0\mod1.
\end{align*}
Then $\langle\alpha + x\mu, \beta + y\mu\rangle$ is the desired group.
\end{proof}
\begin{cor}\label{cor:global}
If there exists an isotropic subgroup $H\subset D$ isomorphic to $(\Z/p\Z)^3$ then $\image(\uparrow) = \C[D]$.
\end{cor}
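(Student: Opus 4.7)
The plan is to simply chain the two preceding lemmas together and then use the fact that $(e^\gamma)_{\gamma\in D}$ is a basis of $\C[D]$.

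First I would fix an arbitrary $\gamma\in D$. Since by hypothesis there exists an isotropic subgroup of $D$ isomorphic to $(\Z/p\Z)^3$, Lemma \ref{lem:eGammaInImageHoch3} applies and yields an isotropic subgroup of $\gamma^\bot$ isomorphic to $(\Z/p\Z)^2$. Then Lemma \ref{lem:eGammaInImage} immediately gives $e^\gamma\in\image(\uparrow)$.

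Since $\gamma\in D$ was arbitrary, the spanning set $\{e^\gamma\mid\gamma\in D\}$ of $\C[D]$ is entirely contained in $\image(\uparrow)$, so $\image(\uparrow)=\C[D]$.

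There is no real obstacle here: the corollary is a direct combination of the two lemmas that precede it, and all the substantive work has already been done in proving Lemma \ref{lem:eGammaInImage} (which produces $e^\gamma$ from a $(\Z/p\Z)^2$ in $\gamma^\bot$) and Lemma \ref{lem:eGammaInImageHoch3} (the linear-algebra trick of adjusting two of three generators of the $(\Z/p\Z)^3$ by multiples of the third to kill the pairing with $\gamma$). The proof should be one or two sentences long.
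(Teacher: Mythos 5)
Your proof is correct and is exactly the argument the paper intends: the corollary is stated without proof precisely because it follows immediately by combining Lemma \ref{lem:eGammaInImageHoch3} with Lemma \ref{lem:eGammaInImage} and observing that the vectors $e^\gamma$ span $\C[D]$.
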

We will also need to understand when $e^\gamma\not\in\image(\uparrow)$. Recall that $\mathcal{H}$ denotes the set of non-trivial isotropic subgroups. We define $\mathcal{H}' = \{H\in\mathcal{H}\mid |H| = p\}$ and
\begin{align*}
    \image(\uparrow') &:= \sum_{H\in\mathcal{H}'}\image(\uparrow_H) \\
    \ker(\downarrow') &:= \bigcap_{H\in\mathcal{H}'}\ker(\downarrow_H).
\end{align*}
Similar to equation (\ref{eq:imageIBot}) also $\image(\uparrow') = \ker(\downarrow')^\bot$. For the construction of $e^\gamma$ as a linear combination of isotropic lifts in the proof of Lemma \ref{lem:eGammaInImage}, only subgroups of order $p$ are used. It turns out that this always suffices:
\begin{lem}\label{lem:iPrimeIsI}
One has $\image(\uparrow') = \image(\uparrow)$ and $\ker(\downarrow') = \ker(\downarrow)$.
\end{lem}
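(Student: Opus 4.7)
The plan is to prove the inclusion $\image(\uparrow)\subset\image(\uparrow')$; the reverse inclusion is immediate from $\mathcal H'\subset\mathcal H$. The second equality $\ker(\downarrow')=\ker(\downarrow)$ can then be obtained for free by taking orthogonal complements via (\ref{eq:imageIBot}), so essentially only one inclusion needs work.

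For the nontrivial inclusion, I would fix an arbitrary $H\in\mathcal H$ and try to factor $\uparrow_H^D$ through a lift along a subgroup of order $p$. The key fact is that, since $D$ has level a power of $p$, any non-trivial isotropic subgroup $H$ is a non-trivial finite abelian $p$-group and therefore contains an element $\alpha$ of order exactly $p$: pick any nonzero $x\in H$ of order $p^k$ and set $\alpha=p^{k-1}x$. Then $K:=\langle\alpha\rangle$ is cyclic of order $p$, is contained in $H$, and is isotropic (as a subgroup of the isotropic subgroup $H$), so $K\in\mathcal H'$.

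Now I invoke Proposition \ref{prp:liftIsTransitive} applied to $K\subset H$: the quotient $H/K$ is an isotropic subgroup of $K^\bot/K$, and
\begin{equation*}
\uparrow_H^D \;=\; \uparrow_K^D\,\circ\,\uparrow_{H/K}^{K^\bot/K}.
\end{equation*}
Consequently $\image(\uparrow_H^D)\subset\image(\uparrow_K^D)\subset\image(\uparrow')$. Summing over $H\in\mathcal H$ gives $\image(\uparrow)\subset\image(\uparrow')$, and combined with the obvious reverse inclusion this proves the first equality.

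For the second equality, the cleanest route is to use $\image(\uparrow)^\bot=\ker(\downarrow)$ and $\image(\uparrow')^\bot=\ker(\downarrow')$ from (\ref{eq:imageIBot}) (the latter being established by exactly the same adjointness argument restricted to $\mathcal H'$), so that the orthogonal complement of the first equality immediately yields $\ker(\downarrow)=\ker(\downarrow')$. Alternatively one could argue directly by the dual transitivity formula $\downarrow_H^D=\downarrow_{H/K}^{K^\bot/K}\circ\downarrow_K^D$, which shows that $\ker(\downarrow_K)\subset\ker(\downarrow_H)$ for any $K\subset H$, and then use the same construction of a subgroup of order $p$ inside each $H\in\mathcal H$. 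No real obstacle is anticipated; the only thing to be careful about is the justification that $H$, being a non-trivial abelian $p$-group, always contains an element of order exactly $p$, which is an elementary structural fact.
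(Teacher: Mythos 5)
Your proposal is correct and is essentially the paper's own argument run in the dual direction: the paper first proves $\ker(\downarrow')=\ker(\downarrow)$ by choosing an order-$p$ isotropic subgroup $H'$ inside an arbitrary $H\in\mathcal{H}$ and using the descent transitivity $\downarrow_H^D=\downarrow_{H/H'}^{H^{\prime\bot}/H'}\circ\downarrow_{H'}^D$ from Proposition \ref{prp:liftIsTransitive}, and then gets $\image(\uparrow')=\image(\uparrow)$ by taking orthogonal complements via (\ref{eq:imageIBot}) — exactly the alternative you mention at the end. Your primary route (lift transitivity first, then complements) rests on the same two ingredients, so the proofs coincide up to which of the two dual statements is established directly.
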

\begin{proof}
First we show that $\ker(\downarrow') = \ker(\downarrow)$. The direction $\ker(\downarrow') \supset \ker(\downarrow)$ is clear, so let $v\in\ker(\downarrow')$ and let $H$ be any non-trivial isotropic subgroup. Then $H$ contains an isotropic subgroup of order $p$ say $H'$ and $H/H'$ is an isotropic subgroup in $H^{\prime\bot}/H'$. Therefore
\begin{equation*}
    \downarrow_H^D(v) = \downarrow_{H/H'}^{H^{\prime\bot}/H'}(\downarrow_{H'}^D(v)) = \downarrow_{H/H'}^{H^{\prime\bot}/H'}(0) = 0
\end{equation*}
and $v\in\ker(\downarrow)$. Because $\image(\uparrow) = \ker(\downarrow)^\bot = \ker(\downarrow')^\bot = \image(\uparrow')$, we automatically get the other equality.
\end{proof}
We will later see that in some cases the condition in Lemma \ref{lem:eGammaInImage} is necessary for $e^\gamma$ to be in $\image(\uparrow)$. For this we will need
\begin{lem}\label{lem:elementInKernel}
Let $\gamma\in D$. Assume that $\gamma^\bot$ contains no isotropic subgroup $H\subset D$ isomorphic to $(\Z/p\Z)^2$. Then for
\begin{equation*}
    v := e^\gamma - \frac{1}{p-1}\sumstack{H'\in\mathcal{H}' \\ H'\subset\gamma^\bot}\sum_{\mu\in H'\setminus\{0\}}e^{\gamma+\mu}
\end{equation*}
we have $\downarrow_H(v) = 0$ for all non-trivial isotropic subgroups $H\subset\gamma^\bot$ and $\langle v,e^\gamma\rangle = 1$.
\end{lem}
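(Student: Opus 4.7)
The second claim, $\langle v, e^\gamma\rangle = 1$, is essentially immediate: in the defining expression for $v$, every term other than $e^\gamma$ is of the form $e^{\gamma+\mu}$ with $\mu\in H''\setminus\{0\}$, hence $\gamma+\mu\neq\gamma$, so by orthogonality of the basis $(e^\beta)_{\beta\in D}$ only the leading $e^\gamma$ contributes. I would dispatch this in one line and move on.

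For the first claim, my plan is to reduce to the case $H = H' \in \mathcal{H}'$ with $H' \subset \gamma^\bot$. Given any non-trivial isotropic $H \subset \gamma^\bot$, since $D$ (and hence $H$) is a $p$-group, $H$ contains some subgroup $H'$ of order $p$; this $H'$ lies in $\gamma^\bot$ and is isotropic, and by transitivity of descent (Proposition \ref{prp:liftIsTransitive}),
\begin{equation*}
\downarrow_H(v) \;=\; \downarrow_{H/H'}^{H^{\prime\bot}/H'}\!\bigl(\downarrow_{H'}(v)\bigr),
\end{equation*}
so it suffices to prove $\downarrow_{H'}(v) = 0$ for every such $H'$.

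Fix such an $H'$. The main computation is to apply $\downarrow_{H'}$ termwise and identify which summands survive. We have $\downarrow_{H'}(e^\gamma) = e^{\gamma+H'}$ since $H'\subset\gamma^\bot$ means $\gamma\in H^{\prime\bot}$. For a summand $e^{\gamma+\mu}$ with $\mu\in H''\setminus\{0\}$, the element $\gamma+\mu$ lies in $H^{\prime\bot}$ iff $\mu\in H^{\prime\bot}$, which (since $H''$ is cyclic of order $p$) holds iff $H''\subset H^{\prime\bot}$. The crucial use of the hypothesis is now: if $H''\neq H'$ then $\langle H', H''\rangle$ is a subgroup of $\gamma^\bot$ of order $p^2$; were $H''\subset H^{\prime\bot}$, this subgroup would be isotropic and isomorphic to $(\Z/p\Z)^2$, contradicting our assumption on $\gamma^\bot$. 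Hence only $H''=H'$ contributes to $\downarrow_{H'}(v)$, and for those terms $\downarrow_{H'}(e^{\gamma+\mu}) = e^{(\gamma+\mu)+H'} = e^{\gamma+H'}$. The sum over $\mu\in H'\setminus\{0\}$ therefore gives $(p-1)\,e^{\gamma+H'}$, and the normalization $\tfrac{1}{p-1}$ was precisely chosen so that
\begin{equation*}
\downarrow_{H'}(v) \;=\; e^{\gamma+H'} - \frac{1}{p-1}(p-1)\,e^{\gamma+H'} \;=\; 0.
\end{equation*}

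The only real obstacle is verifying the dichotomy for $H''$, i.e.\ that $H''\neq H'$ combined with $H''\subset H^{\prime\bot}$ forces $\langle H',H''\rangle\cong(\Z/p\Z)^2$ to be an isotropic subgroup of $\gamma^\bot$; this amounts to checking that two distinct order-$p$ isotropic subgroups which are mutually orthogonal span an isotropic $(\Z/p\Z)^2$, which is straightforward from bilinearity of $(\cdot,\cdot)$ and $\q$.
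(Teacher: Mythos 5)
Your proof is correct and takes essentially the same approach as the paper: the key step --- that a summand $e^{\gamma+\mu}$ with $\mu\in H''\setminus\{0\}$ survives the descent only when $H''=H'$, since otherwise $\langle H',H''\rangle$ would be an isotropic subgroup of $\gamma^\bot$ isomorphic to $(\Z/p\Z)^2$, after which the $(p-1)$ surviving terms cancel against the normalization --- is exactly the paper's argument. The only cosmetic difference is that you first reduce to subgroups of order $p$ via transitivity of $\downarrow$, whereas the paper handles a general non-trivial isotropic $H\subset\gamma^\bot$ directly by noting it contains exactly one subgroup of order $p$.
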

\begin{proof}
The assertion $\langle v,e^\gamma\rangle = 1$ is clear from the construction. Let $H$ be any non-trivial isotropic subgroup with $H\subset\gamma^\bot$. Because of the assumption, $H$ contains exactly one subgroup of order $p$ say $H'$. The subgroup $H'$ appears in the sum defining $v$. For any $K\in\mathcal{H}'$ with $K\subset\gamma^\bot$ and $\mu\in K\setminus\{0\}$ we get the following equivalence:
\begin{align*}
    \gamma+\mu\in H^{\bot} \Leftrightarrow \mu\in H^{\bot}\Leftrightarrow \mu\in H'\Leftrightarrow K= H',
\end{align*}
because otherwise $\langle \mu,H'\rangle$ would contradict the assumption of the lemma. So
\begin{align*}
    \downarrow_H(v) &= \downarrow_H(e^\gamma) - \frac{1}{p-1}\sum_{\mu\in H'\setminus\{0\}}\downarrow_H(e^{\gamma+\mu}) \\
    &= e^{\gamma+H} - \frac{1}{p-1}\sum_{\mu\in H'\setminus\{0\}}e^{\gamma+H} \\
    &= 0.
\end{align*}
\end{proof}

\subsection*{The case where $p$ is an odd prime}

In this subsection we assume that $p$ is odd.

First we want to give a necessary condition for $e^\gamma\in\image(\uparrow)$:
\begin{lem}\label{lem:onlyOneIso}
Let $\gamma\in D$. If $e^\gamma\in\image(\uparrow)$ then $\gamma^\bot$ contains at least two isotropic subgroups of order $p$.
\end{lem}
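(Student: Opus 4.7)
The plan is to argue the contrapositive: assume $\gamma^\bot$ contains at most one isotropic subgroup of order $p$, and derive $e^\gamma \notin \image(\uparrow)$. By Lemma~\ref{lem:iPrimeIsI} it suffices to rule out $e^\gamma \in \image(\uparrow')$, so I would suppose for contradiction that $e^\gamma = \sum_{H' \in \mathcal{H}'} \uparrow_{H'}(v_{H'})$ for some $v_{H'} \in \C[H^{\prime\bot}/H']$ and compare coefficients on the two sides. Since $\uparrow_{H'}(v_{H'})$ is supported on $H^{\prime\bot}$ and $\gamma \in H^{\prime\bot}$ is equivalent to $H' \subset \gamma^\bot$, only the $H'$ contained in $\gamma^\bot$ can contribute to the $e^\gamma$-coefficient. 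If $\gamma^\bot$ contains no isotropic subgroup of order $p$ at all, the right-hand side has $e^\gamma$-coefficient $0$, contradicting the left-hand side and settling that subcase.

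In the remaining case $\gamma^\bot$ has a unique isotropic subgroup $K_0$ of order $p$. Only $\uparrow_{K_0}$ contributes to the $e^\gamma$-coefficient, forcing $v_{K_0}(\gamma + K_0) = 1$; since $\uparrow_{K_0}(v_{K_0})$ is constant on $K_0$-cosets it then contributes coefficient $1$ to $e^{\gamma + \mu}$ for every $\mu \in K_0$. The goal is to find a $\mu \in K_0 \setminus \{0\}$ for which no other $\uparrow_{H'}(v_{H'})$ can contribute to the $e^{\gamma + \mu}$-coefficient; the right-hand side would then have coefficient $1$ on $e^{\gamma+\mu}$ while the left-hand side has $0$, which is the contradiction. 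A short bilinear-form computation shows that for $H' \neq K_0$ such a contribution is possible only when $H' \not\subset K_0^\bot$, and then only for the unique $\mu_0(H') \in K_0 \setminus \{0\}$ determined by $(h_0,\gamma) + (h_0,\mu_0(H')) = 0$ for a generator $h_0$ of $H'$. The task thus reduces to showing that the set $\{\mu_0(H') : H' \in \mathcal{H}',\ H' \neq K_0,\ H' \not\subset K_0^\bot\}$ is a proper subset of $K_0 \setminus \{0\}$.

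Establishing this properness is, I expect, the main obstacle. When $\gamma$ itself is isotropic of order $p^k$ one must have $K_0 = \langle p^{k-1}\gamma\rangle$, and the choice $\mu = p^{k-1}\gamma$ yields $\gamma + \mu = (1+p^{k-1})\gamma$, a unit multiple of $\gamma$; hence $(\gamma+\mu)^\bot = \gamma^\bot$ still contains only $K_0$ as an isotropic subgroup of order $p$, and no $H'$ in the image maps to this $\mu$. The non-isotropic case is more delicate because $\gamma \notin K_0$ and the translates $\gamma + \mu$ generally have different orthogonal complements from $\gamma$; one has to exploit the uniqueness of $K_0$ as an isotropic subgroup of order $p$ in $\gamma^\bot$, together with the observation that any $H'$ with $\mu_0(H') = \mu^*$ lies in $(\gamma+\mu^*)^\bot$ and, paired with $K_0$, produces further isotropic structure in $D$, to rule out $\mu_0$ being surjective onto $K_0\setminus\{0\}$. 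This last structural argument is the technical heart of the proof.
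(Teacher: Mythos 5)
Your framework is sound: comparing the $e^{\gamma+\mu}$-coefficients of $e^\gamma=\sum_{H'}\uparrow_{H'}(v_{H'})$ is exactly the dual of the paper's strategy of exhibiting an element of $\ker(\downarrow')$ that pairs nontrivially with $e^\gamma$ (an uncovered $\mu^*$ gives $w=e^\gamma-e^{\gamma+\mu^*}\in\ker(\downarrow')=\ker(\downarrow)$ with $\langle w,e^\gamma\rangle=1$), the reduction to $\mathcal{H}'$ via Lemma \ref{lem:iPrimeIsI} is legitimate, and your treatment of the cases where $\gamma^\bot$ has no isotropic order-$p$ subgroup, or where $K_0\subset\langle\gamma\rangle$ so that the translates $\gamma+\mu$ are unit multiples of $\gamma$, is correct. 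But you have explicitly deferred the remaining case as "the technical heart", and the hint you give for it does not close the gap: if $H'\ne K_0$ lies in $(\gamma+\mu^*)^\bot$, then $H'$ and $K_0$ do \emph{not} generate an isotropic subgroup (you showed yourself that $H'\not\subset K_0^\bot$), and the presence of a second isotropic order-$p$ subgroup in $(\gamma+\mu^*)^\bot$ contradicts nothing about $\gamma^\bot$ on its own. So "further isotropic structure" is not forthcoming by that route, and the properness of the image of $\mu_0$ remains unproven.

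The missing ingredient is a different one. Write $\ord(\gamma)=n$, $\q(\gamma)=j/n$; the useful dichotomy is not isotropic versus anisotropic but $p\mid j$ versus $p\nmid j$. If $p\mid j$ (and $n>1$) then $n/p\cdot\gamma$ is isotropic and orthogonal to $\gamma$, so $K_0=\langle n/p\cdot\gamma\rangle$ and every translate $\gamma+\mu$, $\mu\in K_0$, is a unit multiple of $\gamma$ (or zero, when $n=p$) --- this absorbs your "easy" case and also the anisotropic elements of order $>p$ with $p\mid j$, which your case split would otherwise send to the hard case. If $p\nmid j$, the bilinear form restricted to $\langle\gamma\rangle$ is nondegenerate, so $D=\langle\gamma\rangle\oplus\langle\gamma\rangle^\bot$ with $\gamma^\bot=\langle\gamma\rangle^\bot$; since $\ord(\gamma+\mu)=n$ and $\q(\gamma+\mu)=\q(\gamma)$ for $\mu\in K_0$, one also has $D=\langle\gamma+\mu\rangle\oplus\langle\gamma+\mu\rangle^\bot$ with $\langle\gamma+\mu\rangle\cong\langle\gamma\rangle$, hence $(\gamma+\mu)^\bot\cong\gamma^\bot$ and $(\gamma+\mu)^\bot$ likewise contains exactly one isotropic subgroup of order $p$ (necessarily $K_0$, as $K_0\subset(\gamma+\mu)^\bot$). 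Consequently no $H'\ne K_0$ is contained in any $(\gamma+\mu)^\bot$: the set $\{\mu_0(H')\}$ is actually empty, which is stronger than the properness you were aiming for. Without this splitting argument the proof is incomplete.
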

\begin{proof}
We show the contraposition: Assume that $\langle\mu\rangle\subset\gamma^\bot$ is the only isotropic subgroup of order $p$ in $\gamma^\bot$. Let $\ord(\gamma) = n$ and $\q(\gamma) = j/n$. \\
If $\gamma=0$ it is clear because then $\gamma^\bot = D$, so assume $n>1$. First assume that $n=p$ and $j=0\bmod p$. Then $\langle\gamma\rangle = \langle-\gamma\rangle = \langle\mu\rangle$ and so $\downarrow_{\langle\mu\rangle}(e^\gamma-e^{-\gamma}) = 0$. Since for all other subgroups $H$ of order $p$ we have $\gamma,-\gamma\not\in H^\bot$, by definition also $\downarrow_H(e^\gamma-e^{-\gamma}) = 0$. So $e^\gamma-e^{-\gamma}\in\ker(\downarrow)$. Since $\langle e^\gamma-e^{-\gamma},e^\gamma\rangle = 1$,
\begin{equation*}
    e^\gamma\not\in\ker(\downarrow)^\bot = \image(\uparrow).
\end{equation*}
Now we assume that $n>p$ or $(j,p)=1$. We first want to show that $(\gamma+\mu)^\bot$ contains only one isotropic subgroup of order $p$ as well, which then must also be $\langle\mu\rangle$: \\
If $j=0\bmod p$ and $n>p$ we have
\begin{align*}
    \q(n/p\cdot\gamma) = n^2/p^2\cdot j/n = \frac{nj}{p^2} = 0 &\mod1 \quad \text{and} \\
    (n/p\cdot\gamma,\gamma) = 2n/p\cdot\q(\gamma) = 2n/p\cdot j/n = 0&\mod1.
\end{align*}
Hence $\langle\mu\rangle = \langle n/p\cdot\gamma\rangle$ and so $\gamma + \mu = (1+kn/p)\cdot\gamma$ for some $k=1,\hdots,p-1$ and
\begin{equation*}
    ((1+kn/p)\cdot\gamma)^\bot = \gamma^\bot.
\end{equation*}
If $(j,p) = 1$, we can write
\begin{equation*}
    D = \langle\gamma\rangle\oplus \langle\gamma\rangle^\bot.
\end{equation*}
Note that $\ord(\gamma+\mu) = n$ and $\q(\gamma+\mu) = \q(\gamma)$, so we can also write
\begin{equation*}
    D = \langle\gamma+\mu\rangle\oplus\langle\gamma+\mu\rangle^\bot.
\end{equation*}
Since $\langle\gamma\rangle\cong\langle\gamma+\mu\rangle$, also $\langle\gamma\rangle^\bot\cong\langle\gamma+\mu\rangle^\bot$. \\
Now $v$ from Lemma \ref{lem:elementInKernel} is simply
\begin{equation*}
    v = e^\gamma- \frac{1}{p-1}\sum_{k=1}^{p-1}e^{\gamma+k\mu}
\end{equation*}
and $\langle\mu\rangle$ is the only isotropic subgroup of order $p$ orthogonal to any component of $v$. So $\downarrow_{\langle\mu\rangle}(v) = 0$ implies $v\in\ker(\downarrow)$. Since $\langle v,e^\gamma\rangle = 1$
\begin{equation*}
    e^\gamma\not\in\ker(\downarrow)^\bot = \image(\uparrow).
\end{equation*}
\end{proof}
Now we can give a condition that is equivalent to $e^\gamma\in\image(\uparrow)$ in many cases:
\begin{prp}\label{prp:eGammaIff}
Let $\gamma\in D$ be of order $n$ with $\q(\gamma) = j/n$. Assume that if $n>p$, then $j=0\bmod p$. Then $e^\gamma\in\image(\uparrow)$ if and only if $\gamma^\bot$ contains an isotropic subgroup $H\subset D$ isomorphic to $(\Z/p\Z)^2$.
\end{prp}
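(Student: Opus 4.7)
The ``if'' direction follows immediately from Lemma~\ref{lem:eGammaInImage}. For the converse, I argue contrapositively: assume $\gamma^\bot$ contains no isotropic subgroup isomorphic to $(\Z/p\Z)^2$ and exhibit $v\in\ker(\downarrow)$ with $\langle v,e^\gamma\rangle\neq0$; by (\ref{eq:imageIBot}) this gives $e^\gamma\notin\image(\uparrow)$, and by Lemma~\ref{lem:iPrimeIsI} it suffices to verify $\downarrow_H(v)=0$ for $H\in\mathcal{H}'$ only.

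If $\gamma^\bot$ contains at most one isotropic subgroup of order~$p$, then Lemma~\ref{lem:onlyOneIso} applies directly. The hypothesis ``$n>p\Rightarrow p\mid j$'' is precisely what is needed in this reduction: in the case $n=p^s>p$ it guarantees that $p^{s-1}\gamma$ is isotropic and lies in $\gamma^\bot$, furnishing a canonical iso order-$p$ subgroup $\langle p^{s-1}\gamma\rangle$ together with the identity $(\gamma+k\mu)^\bot=\gamma^\bot$ for $\mu$ in that subgroup, both already exploited in Lemma~\ref{lem:onlyOneIso}'s proof. The case $n=p$ is handled there by a direct calculation using $\q(\gamma+\mu)=\q(\gamma)$.

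The principal obstacle is the complementary case: $\gamma^\bot$ contains at least two isotropic subgroups of order~$p$, which the no-iso-$(\Z/p\Z)^2$ hypothesis forces to be pairwise non-orthogonal. Here the element $v$ of Lemma~\ref{lem:elementInKernel} still kills $\downarrow_H$ for every $H\subset\gamma^\bot$ and satisfies $\langle v,e^\gamma\rangle=1$, but it generally fails $\downarrow_H(v)=0$ for $H\not\subset\gamma^\bot$, so $v$ must be augmented by terms supported outside the coset $\gamma+\gamma^\bot$. In the hardest subcase---$\gamma$ anisotropic of order $p$, so $D=\langle\gamma\rangle\oplus\gamma^\bot$ splits orthogonally---I would adopt the ansatz $v=\sum_{a\in\Z/p\Z}e^{a\gamma}\otimes v_a$ with each $v_a\in\ker(\downarrow^{\gamma^\bot})$. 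Such $v_a$ are supplied by Lemma~\ref{lem:elementInKernel} applied to $\gamma'=0$ in $\gamma^\bot$, whose hypothesis---the absence of iso $(\Z/p\Z)^2$ in $\gamma^\bot$---holds by assumption. The remaining freedom in the $v_a$ is then tuned to cancel the coupled $\downarrow_H$ conditions for $H\not\subset\gamma^\bot$ while preserving $\langle v,e^\gamma\rangle=1$; the no-iso-$(\Z/p\Z)^2$ hypothesis should ensure the resulting finite-dimensional linear system is consistent.
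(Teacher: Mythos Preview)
Your framework is sound through the reduction to Lemma~\ref{lem:onlyOneIso}, but the final step is where the actual content lies, and you have not carried it out.

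First, a simplification you overlook: under the hypothesis of the proposition, the ``complementary case'' (at least two isotropic order-$p$ subgroups in $\gamma^\bot$) can occur \emph{only} when $n=p$ and $(j,p)=1$. Indeed, if $n>1$ and $p\mid j$ then $(n/p)\gamma$ is isotropic of order $p$ and lies in $\gamma^\bot$; any other isotropic $\mu$ of order $p$ in $\gamma^\bot$ is automatically orthogonal to $(n/p)\gamma$ (being orthogonal to $\gamma$), so $\langle\mu,(n/p)\gamma\rangle$ would be an isotropic $(\Z/p\Z)^2$ in $\gamma^\bot$, contrary to assumption. Hence Lemma~\ref{lem:onlyOneIso} already disposes of every case with $p\mid j$, and Lemma~\ref{lem:elementInKernel} handles $\gamma=0$ since $0^\bot=D$. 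Your ``hardest subcase'' is therefore the only remaining case, not one among several.

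Second, and more seriously: in that remaining case your ansatz $v=\sum_a e^{a\gamma}\otimes v_a$ with $v_a\in\ker(\downarrow^{\gamma^\bot})$ is reasonable---the paper's construction is an instance of it---but the sentence ``the no-iso-$(\Z/p\Z)^2$ hypothesis should ensure the resulting finite-dimensional linear system is consistent'' is not a proof. You have not written down the system, let alone shown it has a solution with $\langle v,e^\gamma\rangle\ne0$. The paper makes a specific choice: with $v_\gamma$ the element of Lemma~\ref{lem:elementInKernel} for $\gamma$ and $v_{-\gamma}$ the analogous element for $-\gamma$ (note $(-\gamma)^\bot=\gamma^\bot$), set $v=v_\gamma-v_{-\gamma}$. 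For $H=\langle\mu\rangle$ with $(\gamma,\mu)\ne0$ one then exhibits an explicit bijection matching each surviving term $\gamma+\mu_1+H$ in $\downarrow_H(v_\gamma)$ with a term $-\gamma+\mu_2+H$ in $\downarrow_H(v_{-\gamma})$: solve $\mu_2=2\gamma+\mu_1+l\mu$ for the unique $l\bmod p$ making $\mu_2$ isotropic, and verify $\mu_2\in\gamma^\bot\setminus\{0\}$ using $\q(\gamma)=j/p$ with $(j,p)=1$. This pairing via the symmetry $\gamma\leftrightarrow-\gamma$ is the missing idea; without it, or an equivalent explicit verification, your argument is incomplete.
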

\begin{proof}
If $\gamma^\bot$ contains an isotropic subgroup $H\subset D$ isomorphic to $(\Z/p\Z)^2$ we can apply Lemma \ref{lem:eGammaInImage}. So assume on the other hand that $\gamma^\bot$ contains no isotropic subgroup $H\subset D$ isomorphic to $(\Z/p\Z)^2$. We construct a $v\in\ker(\downarrow)$ with $\langle v,e^\gamma\rangle = 1$. Then
\begin{equation*}
    e^\gamma\not\in\ker(\downarrow)^\bot = \image(\uparrow).
\end{equation*}
If $\gamma = 0$, then $\gamma^\bot = D$ and Lemma \ref{lem:elementInKernel} yields $v$. So assume that $n>1$. First consider the case $j=0\bmod p$: as in the previous lemma $n/p\cdot\gamma$ is isotropic and orthogonal to $\gamma$. And for any isotropic subgroup $H$ with $\gamma\in H^\bot$, also $n/p\cdot\gamma\in H^\bot$. Hence $\langle n/p\cdot\gamma\rangle$ must be the only isotropic subgroup of order $p$ in $\gamma^\bot$, because if $\gamma^\bot$ contained any other isotropic subgroup of order $p$, say $H$, then $\langle H,n/p\cdot\gamma\rangle$ would contradict the assumption on $\gamma$. But then by Lemma \ref{lem:onlyOneIso} $e^\gamma\not\in\image(\uparrow)$. \\
Now assume $(j,p)=1$ and so $n=p$ by the assumption of the proposition. Since $(\gamma,\beta) = 0$ is equivalent to $(-\gamma,\beta) = 0$, also $-\gamma$ satisfies our assumption. Let $v_1$ and $v_2$ be the elements from Lemma \ref{lem:elementInKernel} for $\gamma$ and $-\gamma$ respectively. We define
\begin{equation*}
    v := v_1 - v_2.
\end{equation*}
For any non-trivial isotropic subgroup $H\subset\gamma^\bot$ we have
\begin{equation*}
    \downarrow_H(v) = \downarrow_H(v_1) - \downarrow_H(v_2) = 0 - 0 = 0.
\end{equation*}
Now we want to show that also $\downarrow_H(v)=0$ when $H\not\subset\gamma^\bot$. By Lemma \ref{lem:iPrimeIsI} it suffices to consider isotropic subgroups of order $p$. So let $\mu$ be any isotropic element of order $p$ with $(\gamma,\mu)\not=0\bmod1$. We show that whenever $(\gamma+\mu_1,\mu) = 0\bmod1$ for some isotropic $\mu_1\in\gamma^\bot$ with $\ord(\mu_1) = p$, then there exists exactly one isotropic $\mu_2\in\gamma^\bot$ with $\ord(\mu_2) = p$ such that
\begin{align*}
    \gamma + \mu_1 &= -\gamma + \mu_2 \mod \langle\mu\rangle\text{, i.e.} \\
    \gamma + \mu_1 + l\cdot\mu &= -\gamma + \mu_2
\end{align*}
for some $l=0,\hdots,p-1$. We can reverse the roles of $\gamma$ and $-\gamma$. This shows that the terms in $\downarrow_{(\mu)}(v_1)$ and $\downarrow_{(\mu)}(v_2)$ cancel each other. So assume that $(\gamma,\mu)\not=0\bmod1$, but $(\gamma+\mu_1,\mu)=0\bmod1$. We need to find a suitable $l$ such that
\begin{equation*}
    \mu_2 := 2\gamma + \mu_1 + l\cdot\mu
\end{equation*}
is isotropic, orthogonal to $\gamma$ and of order $p$. We have
\begin{align*}
    \q(\mu_2) &= \q(2\gamma + \mu_1 + l\cdot\mu) \\
    &= 4\q(\gamma) + 2l(\gamma,\mu) + l(\mu_1,\mu) \\
    &= 4\q(\gamma) + l(\gamma+\mu_1,\mu) + l(\gamma,\mu) \\
    &= 4j/p + l(\gamma,\mu).
\end{align*}
Since $(\gamma,\mu)\not=0\bmod1$, there exists exactly one $l\bmod p$ such that $\mu_2$ is isotropic. With said $l$ we have
\begin{align*}
    (\gamma,\mu_2) &= (\gamma,2\gamma + \mu_1 + l\cdot\mu) \\
    &= 4\q(\gamma) + (\gamma,\mu_1) + l(\gamma,\mu) \\
    &= 4j/p + l(\gamma,\mu) = 0\mod 1.
\end{align*}
Clearly $p\cdot\mu_2=0$. Furthermore, $\mu_2$ cannot be $0$, because then $l\cdot\mu = -(2\gamma + \mu_1)$ and 
\begin{equation*}
    0 = \q(l\mu) = 4\q(\gamma) = 4j/p\mod1,
\end{equation*}
which is a contradiction.
\end{proof}
We will later see an example of a $\gamma$ of order $n>p$ and norm $j/p$ with $(j,p) = 1$ where $\gamma^\bot$ contains no isotropic subgroup isomorphic to $(\Z/p\Z)^2$ but still $e^\gamma\in\image(\uparrow)$. Since for components of level higher than $p$, elements of order $p$ are always isotropic, it is useful to decompose $D = A \oplus B$ where $A$ denotes the sum over the irreducible components of order $p$ and $B$ the sum over the remaining components. So $A=p^{\epsilon n}$ for some $\epsilon=\pm1$ and $n\geq0$. To better understand the isotropic subgroups of $p^{\pm n}$ we need
\begin{prp}
	Let $D=p^{\epsilon n}$ and let $H\subset D$ be a maximal isotropic subgroup of $D$. Then $H\cong(\Z/p\Z)^r$ with
	\[ r = \begin{cases}
		n/2 & \text{if $n$ is even and $\epsilon=\leg{-1}{p}^{n/2}$} \\
		(n-1)/2 & \text{if $n$ is odd} \\
		(n-2)/2 & \text{if $n$ is even and $\epsilon=-\leg{-1}{p}^{n/2}$}
		\end{cases}. \]
\end{prp}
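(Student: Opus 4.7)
The plan is to exploit the fact that $D=p^{\epsilon n}$ has level $p$, so every element has order dividing $p$ and $D$ is naturally an $n$-dimensional $\F_p$-vector space. The quadratic form $\q$ takes values in $\tfrac{1}{p}\Z/\Z\cong\F_p$, and the associated bilinear form is non-degenerate by the definition of a discriminant form. Thus $(D,\q)$ is a non-degenerate quadratic space over $\F_p$, and any maximal isotropic subgroup of $D$ is precisely a maximal totally isotropic $\F_p$-subspace, whose dimension is by definition the Witt index of the form. So the task reduces to computing the Witt index of a non-degenerate quadratic space of dimension $n$ over $\F_p$ with ``sign'' $\epsilon$.

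I would then invoke the standard Witt decomposition: every non-degenerate quadratic space over $\F_p$ ($p$ odd) splits as an orthogonal sum of hyperbolic planes and an anisotropic kernel of dimension at most $2$, because any ternary quadratic form over $\F_p$ is isotropic (Chevalley--Warning, or a direct pigeonhole on the squares). This already pins down the Witt index in each parity case up to one remaining ambiguity:
\begin{itemize}
\item If $n=2m+1$ is odd, the anisotropic kernel has dimension $1$, so the Witt index is $m=(n-1)/2$, independently of $\epsilon$.
\item If $n=2m$ is even, the anisotropic kernel has dimension $0$ or $2$, so the Witt index is either $m$ or $m-1$.
\end{itemize}

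To resolve the even case I would translate the Conway--Sloane sign $\epsilon$ into the discriminant modulo squares. By the conventions recalled in Section~\ref{sec:Definitions}, $\epsilon=\leg{d}{p}$ where $d\in\F_p^*/(\F_p^*)^2$ is the determinant of a Gram matrix for the bilinear form on $D$. A hyperbolic plane has discriminant $-1$ modulo squares, so the orthogonal sum of $m$ hyperbolic planes has discriminant $(-1)^m$. Because discriminant modulo squares is a complete invariant of non-degenerate quadratic forms over $\F_p$ of fixed dimension, $D$ is hyperbolic (Witt index $m$) exactly when $\epsilon=\leg{-1}{p}^{m}$, and otherwise the anisotropic kernel is $2$-dimensional and the Witt index is $m-1$. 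Combining the three cases yields precisely the claimed formula for $r$.

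The only subtle point I expect is the bookkeeping identification $\epsilon=\leg{d}{p}$: one must confirm that the Conway--Sloane convention used in the paper agrees with the determinant-mod-squares interpretation (equivalently, that a diagonal form $\operatorname{diag}(a_1,\dots,a_n)/p$ on $(\Z/p\Z)^n$ has $\epsilon=\leg{a_1\cdots a_n}{p}$). Once this is in place the proof is a direct application of Witt's theorem and the classification of anisotropic forms over $\F_p$; the rest is arithmetic.
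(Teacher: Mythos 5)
Your proof is correct, but it takes a genuinely different route from the paper's. The paper stays inside the discriminant-form formalism: if $H$ is maximal isotropic then $H^\bot/H$ contains no non-trivial isotropic elements, the only anisotropic $p$-adic discriminant forms are $\{0\}$, $p^{\pm1}$ and $p^{-\varepsilon2}$ with $\varepsilon=\leg{-1}{p}$, the order formula $|H^\bot/H|=|D|/|H|^2$ gives $\rk(H^\bot/H)=n-2r$, and the even case is settled by the invariance $\sign(H^\bot/H)=\sign(D)\bmod8$, which distinguishes $\{0\}$ from $p^{-\varepsilon2}$. You instead read $D$ as a non-degenerate quadratic space over $\mathbb{F}_p$, compute the Witt index via Witt decomposition (anisotropic kernel of dimension at most $2$ by Chevalley--Warning), and decide the even case by the discriminant modulo squares together with the dictionary $\epsilon=\leg{d}{p}$. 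Both arguments ultimately rest on the same classification of forms over $\mathbb{F}_p$: the paper's version quotes the list of anisotropic discriminant forms and the signature invariance as known facts and thereby never has to unwind the Conway--Sloane sign convention, whereas yours is self-contained over $\mathbb{F}_p$ but needs the bookkeeping you rightly flag, namely that for rescaled bilinear Gram matrix $\operatorname{diag}(a_1,\dots,a_n)$ one has $\epsilon=\leg{a_1\cdots a_n}{p}$; this is indeed Conway--Sloane's convention (the Legendre symbol of the determinant of the unimodular Jordan block), and since you only use the discriminant in even rank, the factor $2^n$ between the Gram matrices of the bilinear and quadratic forms is harmless there. Two small points worth making explicit: for $p$ odd the vanishing of $\q$ on a subgroup is equivalent to the vanishing of the bilinear form, so ``isotropic subgroup'' really is ``totally isotropic subspace''; and it is Witt's theorem that guarantees all maximal totally isotropic subspaces have the same dimension, which is what makes $r$ well defined for an arbitrary maximal $H$.
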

\begin{proof}
	Since $H$ is maximal, the discriminant form $H^\bot/H$ contains no non-trivial isotropic elements. It is well-known and not difficult to prove that the only $p$-adic discriminant forms with this property are $\{0\}$, $p^{\pm1}$ and $p^{-\varepsilon2}$ with $\varepsilon = \leg{-1}{p}$. Because $|H^\bot/H| = |D|/|H|^2$, we find that
	\[ \rk(H^\bot/H) = n-2r. \]
	For $n$ odd, this proves the claim, for even $n$ it follows from $\sign(H^\bot/H) = \sign(D)\bmod8$.
\end{proof}
Now we want to see for which discriminant forms $e^\gamma\in\image(\uparrow)$ for all $\gamma$. First we find those discriminant forms that satisfy the general condition of Corollary \ref{cor:global}
\begin{prp}\label{prp:conditionsHoch3}
Assume that $D$ contains no isotropic subgroup isomorphic to $(\Z/p\Z)^3$. Then $D$ satisfies one of the following conditions:
\begin{enumerate}[(i)]
    \item $D$ has rank two or less.
    \item $D$ has rank three and at least one Jordan component is of level $p$.
    \item $D$ has rank four and $D=p^{-\epsilon2}q_1^{\pm1}q_2^{\pm1}$, where $\epsilon = \leg{-1}{p}$ and $q_1,q_2$ are powers of $p$ and can also be $p$.
    \item $D$ has rank five and $D=p^{-4}q^{\pm1}$, where $q$ is a power of $p$ and can also be $p$.
    \item $D$ has rank six and $D=p^{-\epsilon6}$, where $\epsilon = \leg{-1}{p}$.
\end{enumerate}
\end{prp}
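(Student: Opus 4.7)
Write $D = A \oplus B$, where $A = p^{\epsilon n}$ is the (single) Jordan component of level exactly $p$, of rank $n$ and sign $\epsilon \in \{\pm 1\}$, and $B$ collects all Jordan components of higher level $p^k$ with $k \geq 2$. Let $m := \rk(B)$.

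The first step is to observe that the $p$-torsion subgroup $B[p]$ is a totally isotropic subgroup of $D$ of rank $m$. Any element of $B[p]$ is a sum of elements of the form $p^{k-1}\beta$ with $\beta$ in a Jordan block of level $p^k$, $k \geq 2$; then $\q(p^{k-1}\beta) = p^{2(k-1)}\q(\beta) \in p^{k-2}\Z/\Z = 0$, and an identical calculation shows the bilinear form on $B[p]$ vanishes modulo $1$. Combining $B[p]$ with a maximal isotropic subgroup of $A$ (whose rank $r_A$ is given by the preceding proposition), we obtain an isotropic subgroup of $D$ isomorphic to $(\Z/p\Z)^{r_A + m}$. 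The hypothesis that $D$ admits no isotropic $(\Z/p\Z)^3$ therefore forces $r_A + m \leq 2$.

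The second step is a case analysis on $(m, n)$, using the explicit formula for $r_A$. If $m \geq 3$ the bound already fails, so $m \in \{0,1,2\}$. For $m = 0$, the constraint $r_A \leq 2$ permits $n \leq 5$ with no sign restriction, or $n = 6$ only when $\epsilon = -\leg{-1}{p}$; these give, respectively, condition (i) for $n \leq 2$, (ii) for $n = 3$, (iii) with $q_1 = q_2 = p$ after regrouping $p^{\pm 4} = p^{-\epsilon 2} \oplus p^{\pm 1} \oplus p^{\pm 1}$, (iv) with $q = p$ after regrouping $p^{\pm 5} = p^{-4} \oplus p^{\pm 1}$, and (v). For $m = 1$, with $B = (p^k)^{\pm 1}$, the bound $r_A \leq 1$ allows $n \leq 3$ unconditionally and $n = 4$ only for $A = p^{-4}$; each case fits (i)--(iv) after a similar regrouping. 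For $m = 2$, the bound $r_A \leq 0$ forces $n \leq 1$ unconditionally and $n = 2$ only for $A = p^{-\epsilon 2}$; since $p$ is odd the rank-$2$ group $B$ splits into two rank-one blocks $q_1^{\pm 1} q_2^{\pm 1}$, matching (iii).

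The main obstacle is the bookkeeping in the second step: in each surviving $(m, n, \epsilon)$ combination one must verify that the Jordan decomposition can be regrouped into the prescribed normal form of (iii)--(v), in particular in the borderline cases where the sign $\epsilon$ interacts non-trivially with $\leg{-1}{p}^{n/2}$ through the rank formula for a maximal isotropic of $A$. Once these regroupings are checked, the five listed conditions exhaust all possibilities.
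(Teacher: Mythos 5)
Your proposal is correct and follows essentially the same route as the paper: split $D=A\oplus B$ into the level-$p$ part and the higher-level part, use that the order-$p$ elements of $B$ form an isotropic $(\Z/p\Z)^{\rk B}$ together with the preceding proposition on maximal isotropic subgroups of $p^{\epsilon n}$, and then verify the resulting list against conditions (i)--(v). Your single inequality $r_A+m\leq 2$ is just a compact restatement of the paper's case split $\rk B=2,1,0$, and your regrouping checks match the paper's final verification.
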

\begin{proof}
We choose a Jordan decomposition of $D$ and write $D = A \oplus B$ where $A$ denotes the sum over the irreducible components of order $p$ and $B$ the sum over the remaining components. Since all elements of order $p$ in $B$ are isotropic and orthogonal to each other, $B$ must have rank less than three. If it has rank two, $p^{\pm n}$ cannot contain any non-trivial isotropic element, so it is equal to one of
\begin{equation*}
    \{0\},\ p^{\pm1},\ p^{-\leg{-1}{p}2}.
\end{equation*}
If $B$ has rank one, $p^{\pm n}$ cannot contain any isotropic subgroup isomorphic to $(\Z/p\Z)^2$, so it is equal to one of
\begin{equation*}
    \{0\},\ p^{\pm1},\ p^{\pm 2}, \ p^{\pm3},\ p^{-4}.
\end{equation*}
If $B$ is trivial, $p^{\pm n}$ cannot contain any isotropic subgroup isomorphic to $(\Z/p\Z)^3$, so it is equal to one of
\begin{equation*}
    \{0\},\ p^{\pm1},\ p^{\pm 2}, \ p^{\pm3},\ p^{\pm4}, \ p^{\pm5},\ p^{-\leg{-1}{p}6}.
\end{equation*}
It is easy to check that the discriminant forms in this list are exactly the ones obtained from the conditions stated in the proposition.
\end{proof}

Finally we check for which discriminant forms appearing in Proposition \ref{prp:conditionsHoch3} indeed $\image(\uparrow)=\C[D]$. 
\begin{thm}\label{thm:mainp}
Let $D$ be a discriminant form of level a power of an odd prime $p$. Then $\image(\uparrow) = \C[D]$ unless $D$ is one of the following:
\begin{enumerate}[(i)]
    \item $D$ has rank two or less.
    \item $D$ has rank three and at least one Jordan component is of level $p$.
    \item $D$ has rank four and $D=p^{-\epsilon2}q_1^{\pm1}q_2^{\pm1}$, where $\epsilon = \leg{-1}{p}$ and $q_1,q_2$ are powers of $p$ and can also be $p$.
    \item $D$ has rank five and is of level $p$.
\end{enumerate}
\end{thm}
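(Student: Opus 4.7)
The plan is to combine Proposition \ref{prp:conditionsHoch3} with a case-by-case analysis. If $D$ is not in the list of that proposition, then $D$ contains an isotropic $(\Z/p\Z)^3$ and Corollary \ref{cor:global} immediately gives $\image(\uparrow)=\C[D]$. So it remains to check, for each of the five types (i)--(v) of Proposition \ref{prp:conditionsHoch3}, whether $\image(\uparrow)=\C[D]$.

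To prove $\image(\uparrow)\subsetneq\C[D]$ in the theorem's cases (i)--(iv), I will exhibit a $\gamma\in D$ with $e^\gamma\notin\image(\uparrow)$. For case (i), rank $\leq 2$ forces $D$ itself to contain no isotropic $(\Z/p\Z)^2$, so $\gamma=0$ works via Proposition \ref{prp:eGammaIff}. For cases (ii) and (iii), I split by Jordan structure: if $D$ itself has no isotropic $(\Z/p\Z)^2$ (e.g.\ pure $p^{\pm 3}$, or $p^{-\epsilon 2}$ is present with the other summands of level $>p$), then $\gamma=0$ suffices; if the isotropic $(\Z/p\Z)^2$ comes from a level-$p$ Jordan summand, I take $\gamma$ anisotropic of order $p$ inside that summand, which forces $\gamma^\bot$ restricted there to become rank-one anisotropic and so destroys all isotropic $(\Z/p\Z)^2$ in $\gamma^\bot$; and in the remaining sub-case where the Jordan structure contains two summands $q_1^{\pm 1},q_2^{\pm 1}$ with both $q_i>p$ (where Proposition \ref{prp:eGammaIff} no longer applies to the natural candidate), I construct $v=e^\gamma-e^{\gamma+\mu_2}\in\ker(\downarrow)$ directly with $\gamma$ a generator of $q_1^{\pm 1}$ and $\mu_2$ a generator of $q_2[p]$; the verification that $\downarrow_H(v)=0$ for every isotropic line $H$ is a short case distinction (every such $H$ lies in $q_1[p]\oplus q_2[p]$, and for $H=\langle\mu_2\rangle$ both terms of $v$ have the same coset while for the other $p$ lines neither $\gamma$ nor $\gamma+\mu_2$ is in $H^\bot$), and Lemma \ref{lem:iPrimeIsI} then gives $v\in\ker(\downarrow)$. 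Finally for case (iv), $D=p^{\pm 5}$, I pick $\gamma$ anisotropic of order $p$ whose quadratic sign is chosen so that $\gamma^\bot$ becomes the rank-four form of maximal isotropic rank one ($p^{-4}$ when $D=p^{+5}$, $p^{+4}$ when $D=p^{-5}$); Proposition \ref{prp:eGammaIff} concludes.

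To prove $\image(\uparrow)=\C[D]$ in case (v) $D=p^{-\epsilon 6}$, I show that every $\gamma^\bot$ contains an isotropic $(\Z/p\Z)^2$ and invoke Lemma \ref{lem:eGammaInImage}. For $\gamma=0$ this is the statement that $p^{-\epsilon 6}$ has maximal isotropic rank two, from the proposition preceding Proposition \ref{prp:conditionsHoch3}. For $\gamma$ anisotropic of order $p$, $\gamma^\bot$ is non-degenerate of type $p^{\pm 5}$ with maximal isotropic rank two. For $\gamma$ non-zero isotropic, $\gamma^\bot/\langle\gamma\rangle$ has rank four and the signature of $D$, hence is of type $p^{\pm 4}$; both $p^{+4}$ and $p^{-4}$ contain a non-zero isotropic element, which together with $\gamma$ yields the required rank-two isotropic subgroup of $\gamma^\bot$.

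The main obstacle is the remaining case $D=p^{-4}q^{\pm 1}$ with $q>p$, where $\image(\uparrow)=\C[D]$ must be proved but Lemma \ref{lem:eGammaInImage} fails for every $\gamma=(\alpha,\beta)$ with $\beta$ of full order $q$. Indeed, every isotropic $(\Z/p\Z)^2\subset D$ must contain $q[p]$ (since $D[p]$ has radical $q[p]$ and its non-degenerate quotient $p^{-4}$ has maximal isotropic rank one), but $q[p]\not\subset\gamma^\bot$ precisely when $\beta$ has full order. For such $\gamma$ my plan is to construct $e^\gamma\in\image(\uparrow)$ by combining the lifts $\uparrow_{\langle(\alpha_0,0)\rangle}(e^{(\alpha,\beta)+\langle\alpha_0\rangle})=\sum_{i=0}^{p-1}e^{(\alpha+i\alpha_0,\beta)}$ over all isotropic lines $\langle\alpha_0\rangle\subset\alpha^\bot|_{p^{-4}}$, together with the $\rho_D$-invariance of $\image(\uparrow)$ to generate further relations, and solving the resulting linear system. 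Checking non-degeneracy, which reduces to a combinatorial identity on the isotropic lines of $p^{-4}$ (respectively of $\alpha^\bot|_{p^{-4}}$ when $\alpha\neq 0$), is the most delicate technical step, and the place where the specific structure of $p^{-4}$ becomes essential.
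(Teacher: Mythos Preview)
Your overall strategy matches the paper's and your treatment of $p^{-\epsilon 6}$ is correct. There are two small slips and one genuine gap.

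The small slips: in case~(i) the claim that rank $\le 2$ forces $D$ to contain no isotropic $(\Z/p\Z)^2$ is false --- for $D=q_1^{\pm1}q_2^{\pm1}$ with both $q_i>p$ the subgroup $q_1[p]\oplus q_2[p]$ is isotropic of rank two. Your own construction $v=e^\gamma-e^{\gamma+\mu_2}$ (equivalently Lemma~\ref{lem:onlyOneIso}, since $\gamma^\bot$ then has a unique isotropic line) handles this, so you only need to extend that sub-case to rank $\le 2$. In case~(iv) the choice for $D=p^{-5}$ is wrong: $p^{+4}$ has maximal isotropic rank $2$, not $1$, so Proposition~\ref{prp:eGammaIff} would put $e^\gamma$ \emph{into} $\image(\uparrow)$. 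You want $\gamma^\bot\cong p^{-4}$ for both signs of $D$; for $D=p^{-5}$ this means taking $\langle\gamma\rangle\cong p^{+1}$.

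The real gap is the plan for $D=p^{-4}q^{\pm1}$ with $q>p$. The lifts $\uparrow_{\langle(\alpha_0,0)\rangle}$ you propose, with $\alpha_0$ ranging over isotropic lines of $p^{-4}$, have total image exactly $\image(\uparrow)_{p^{-4}}\otimes\C[q^{\pm1}]$, because $\langle\alpha_0\rangle^\bot/\langle\alpha_0\rangle$ splits as $(\alpha_0^\bot|_{p^{-4}}/\langle\alpha_0\rangle)\oplus q^{\pm1}$. Since $p^{-4}$ itself is of small type (it is case~(iii) with $q_1=q_2=p$), this tensor factor is a \emph{proper} subspace of $\C[p^{-4}]$, so your lifts cannot reach every $e^{(\alpha,\beta)}$. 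Invoking $\rho_D$-invariance adds nothing: each $\image(\uparrow_H)$ is already $\rho_D$-stable because $\uparrow_H$ intertwines the two Weil representations, so the span of your lifts is already the subrepresentation it generates. To succeed you must also use isotropic lines that mix the two summands, namely $\langle(q/p)\delta+\alpha_0\rangle$ with $\delta$ a generator of $q^{\pm1}$ and $\alpha_0\in p^{-4}$ of the appropriate norm. The paper does exactly this: it builds three families $v,w,u_l$ of lifts --- $v$ from lines in $p^{-4}$, $w$ from the mixed lines, $u_l$ from lines in $p^{-4}$ at shifted base points $(1+lq/p)\gamma+\beta$ --- computes the multiplicities $a_l,b_l$ of each target vector using that $\Aut(p^{-4})$ acts transitively on elements of fixed nonzero norm, and writes down an explicit linear combination equal to $e^\gamma$.
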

\begin{proof}
If $D$ contains an isotropic subgroup isomorphic to $(\Z/p\Z)^3$, then by Corollary \ref{cor:global} $\image(\uparrow) = \C[D]$. Therefore, we consider the remaining discriminant forms, described in Proposition \ref{prp:conditionsHoch3}. We show that in all cases except $p^{-\leg{-1}{p}6}$ and $p^{-4}q^{\pm1}$, there is an element $e^\gamma\not\in\image(\uparrow)$. For $p^{-\leg{-1}{p}6}$ and $p^{-4}q^{\pm1}$, we show that $\image(\uparrow) = \C[D]$. We begin with the latter two cases. \\
So let $\gamma\in p^{-\leg{-1}{p}6}$. We first assume that $\gamma\not=0$. If $\q(\gamma) = 0\bmod1$ we can write
\begin{equation*}
    D = \langle\gamma,\mu\rangle\oplus p^{-4},
\end{equation*}
for some $\mu\in D$ where $\langle\gamma,\mu\rangle \cong p^{\leg{-1}{p}2}$. The component $p^{-4}$ contains a non-trivial isotropic element, say $\beta$. Then $\langle\gamma,\beta\rangle$ is an isotropic subgroup isomorphic to $(\Z/p\Z)^2$ in $\gamma^\bot$ and we can apply Lemma \ref{lem:eGammaInImage}. Now assume that $\q(\gamma) = x/p$ with $\leg{2x}{p} = \epsilon=\pm1$. Then we can write
\begin{equation*}
    D = \langle\gamma\rangle\oplus p^{+4}\oplus p^{-\epsilon\leg{-1}{p}}.
\end{equation*}
The component $p^{+4}$ contains an isotropic subgroup isomorphic to $(\Z/p\Z)^2$ and we can again apply Lemma \ref{lem:eGammaInImage}. We have seen that $p^{-\leg{-1}{p}6}$ contains isotropic subgroups $H$ isomorphic to $(\Z/p\Z)^2$. Obviously $H\subset 0^\bot = D$, hence, also $e^0\in\image(\uparrow)$. \\
Now let $D=p^{-4}q^{\pm1}$ with $q>p$ and write
\begin{equation*}
    D=p^{-4}\oplus\langle\gamma\rangle.
\end{equation*}
Let $\q(\gamma) = j/q$ with $(j,p) = 1$. It is not difficult to see that for any $\beta\in p^{-4}$, there exists a non-trivial isotropic $\mu\in p^{-4}\cap\beta^\bot$. And so for any element of the form $\beta+x\gamma$ with $x=0\bmod p$, the isotropic group $H=\langle\mu,q/p\cdot\gamma\rangle$ is isomorphic to $(\Z/p\Z)^2$ and in $(\beta+x\gamma)^\bot$ because
\[ (q/p\cdot\gamma,\beta+x\gamma) = 2xq/p\cdot\q(\gamma) = 0\mod 1. \]
If $(x,p) = 1$, we can w.l.o.g. assume that $\beta=0$, because one can also decompose $D=p^{-4}\oplus\langle x\gamma+\beta\rangle$. Let $I_p$ be the set of isotropic elements in $p^{-4}$ of order $p$. We define
\begin{align*}
    v &:= \sum_{\mu\in I_p}\uparrow_{\langle\mu\rangle}(e^{\gamma+\langle\mu\rangle}) \\
    w &:= \sumstack{\mu,\beta\in I_p \\ (\mu,\beta)=-2j/p}\uparrow_{\langle(q/p)\cdot\gamma+\beta\rangle}(e^{\gamma+\mu+\langle(q/p)\cdot\gamma+\beta\rangle}) \\
    u_l &:= \sumstack{\mu\in I_p,\beta\in p^{-4} \\ (\mu,\beta)=0 \\ \q(\beta)=-2lj/p}\uparrow_{\langle\mu\rangle}(e^{(1+lq/p)\gamma+\beta+\langle\mu\rangle})
\end{align*}
for $l=1,\hdots,p-1$. Then we have
\begin{align*}
    v = |I_p|\cdot e^\gamma + (p-1)\sum_{\mu\in I_p}e^{\gamma+\mu}.
\end{align*}
The terms in $w$ are of the form $e^{(1+lq/p)\gamma+\alpha}$, where $\alpha=\mu+l\beta$ with $\q(\alpha) = l(\mu,\beta)=-2lj/p$ and $(\alpha,\mu) = -2lj/p$. If we on the other hand start with an $l=1,\hdots,p-1$ and $\alpha\in p^{-4}\setminus\{0\}$ with $\q(\alpha) = -2lj/p$, then we can find a $\mu\in I_p$ with $(\alpha,\mu) = -2lj/p$. Furthermore $\beta:=l^{-1}(\alpha-\mu)\in I_p$ and satisfies $(\mu,\beta)=-2j/p$ and $\alpha=\mu+l\beta$. We denote the number of such $\mu$ for a given $\alpha$ by $a_l$. This number does not depend on the choice of $\alpha$ because all such $\alpha$ are in the same orbit under the automorphism group of $p^{-4}$. For $l=0$ let us denote the number of $\beta\in I_p$ with $(\beta,\mu)=-2j/p\bmod1$ for a given $\mu\in I_p$ by $a_0>0$, which again does not depend on the choice of $\mu$. Then we find
\begin{align*}
    w = \sum_{l=0}^{p-1}a_l\sumstack{\alpha\in p^{-4}\setminus\{0\} \\ \q(\alpha)=-2lj/p}e^{(1+lq/p)\gamma + \alpha}.
\end{align*}
The terms in $u_l$ are of the form $e^{(1+lq/p)\gamma+\alpha}$, where $\alpha=m\mu+\beta$ with $\q(\alpha) = \q(\beta)=-2lj/p$ and $(\alpha,\mu) = 0$. Again for any given $l=1,\hdots,p-1$ and $\alpha\in p^{-4}\setminus\{0\}$ with $\q(\alpha) = -2lj/p$ we find a $\mu\in I_p$ with $(\alpha,\mu) = 0$. Then for every $m=0,\hdots,p-1$ we find that $\beta:=\alpha-m\mu\in p^{-4}$ satisfies $(\mu,\beta)=0$, $\q(\beta)=-2lj/p$ and $\alpha=m\mu+\beta$. We denote the number of such $\mu$ for a given $\alpha$ by $b_l>0$, which again does not depend on the choice of $\alpha$. Then we find
\begin{align*}
    u_l = pb_l\sumstack{\alpha\in p^{-4}\setminus\{0\} \\ \q(\alpha)=-2lj/p}e^{(1+lq/p)\gamma + \alpha}.
\end{align*}
Together we get
\begin{align*}
    \frac{1}{|I_p|}\left[v - \frac{p-1}{a_0}\cdot w + \sum_{l=1}^{p-1}\frac{(p-1)a_l}{a_0pb_l}\cdot u_l\right] = e^\gamma.
\end{align*}
Now we go through the four conditions provided in the theorem and show that in each case we can always find a $\gamma$ for which $e^\gamma$ is not in $\image(\uparrow)$: \\
If $D$ has rank two or less, we can write $D=\langle\beta\rangle\oplus\langle\gamma\rangle$, where $\beta$ and $\gamma$ are either anisotropic or trivial. So $\gamma^\bot = \langle\beta\rangle$ contains at most one isotropic subgroup of order $p$. By Lemma \ref{lem:onlyOneIso} $e^\gamma\not\in\image(\uparrow)$. \\
Now let $D = \langle\mu\rangle\oplus\langle\beta\rangle\oplus\langle\gamma\rangle$, where $\mu,\beta$ and $\gamma$ are all anisotropic and $\q(\mu) = x/p\bmod1$ for some integer $x$ coprime to $p$. Then $\gamma^\bot = \langle\mu\rangle\oplus\langle\beta\rangle$ and $\beta^\bot=\langle\mu\rangle\oplus\langle\gamma\rangle$. If $\ord(\beta) = n > p$, then $\langle n/p\cdot\beta\rangle$ is the only isotropic subgroup of order $p$ in $\gamma^\bot$ and so $e^\gamma\not\in\image(\uparrow)$. If $\ord(\beta) = p$, the only subgroup in $\beta^\bot$ isomorphic to $(\Z/p\Z)^2$ is $\langle\mu\rangle\oplus\langle\ord(\gamma)/p\cdot\gamma\rangle$, which is not isotropic since $\mu$ is not. So by Proposition \ref{prp:eGammaIff} $e^\beta\not\in\image(\uparrow)$. \\
Now let $D=p^{-\leg{-1}{p}2}\oplus\langle\beta\rangle\oplus\langle\gamma\rangle$, where $\beta$ and $\gamma$ are anisotropic. Similar to before, if $\beta$ is of order $n>p$, then $\langle n/p\cdot\beta\rangle$ is the only isotropic subgroup of order $p$ in $\gamma^\bot$ and we can apply Lemma \ref{lem:onlyOneIso}. If $\ord(\beta) = p$, then we can assume that $\gamma$ has order $p$ as well, because otherwise we are in the same situation as before just with the roles of $\beta$ and $\gamma$ reversed. And so $\beta^\bot\cong p^{\pm3}$, which contains no isotropic subgroup isomorphic to $(\Z/p\Z)^2$ and so by Proposition \ref{prp:eGammaIff} $e^\beta\not\in\image(\uparrow)$. \\
Finally let $D = p^{-4}\oplus\langle\gamma\rangle$, with $\gamma$ of order $p$. Then $\gamma^\bot = p^{-4}$, which contains no isotropic subgroup isomorphic to $(\Z/p\Z)^2$ and so again by Proposition \ref{prp:eGammaIff} $e^\gamma\not\in\image(\uparrow)$.
\end{proof}

\subsection*{The case where $p=2$}

Now we consider the prime $p=2$. The situation is similar to $p$ odd, but more complicated. \\
We assume that $D$ is a discriminant form of level a power of $2$. We define a graph $G_D=(V,E)$ with set of vertices $V$ and set of edges $E$ by
\begin{align*}
    V &:= D, \\
    E &:= \{\{\gamma,\beta\}\subset V\mid\mu:=\gamma-\beta,\ord(\mu)=2, \\
    &\qquad\qquad\q(\mu) = (\mu,\gamma) = (\mu,\beta) = 0\bmod1\},
\end{align*}
i.e. there is an edge between $\gamma$ and $\beta$ if and only if $\{\gamma,\beta\}$ is a coset in $H^\bot/H$ for some isotropic subgroup $H$ of order $2$. If for $\gamma_1,\hdots,\gamma_n\in V$ there is an edge between $\gamma_i$ and $\gamma_{i+1}$ for $i=1,\hdots,n-1$ and an edge between $\gamma_n$ and $\gamma_1$, we call $(\gamma_1,\hdots,\gamma_n)$ an \textit{isotropic cycle} of length $n$. Recall that a graph $G=(V,E)$ is called \textit{bipartite} if one can partition $V$ into two disjoint sets $A$ and $B$ such that for no $e\in E$ we have $e\subset A$ or $e\subset B$. We have

\begin{prp}\label{prp:eGammaIff21}
Let $\gamma\in D$. Then the following are equivalent:
\begin{enumerate}[(i)]
    \item $e^\gamma\in\image(\uparrow)$
    \item The connected component of $G_D$ containing $\gamma$ is not bipartite
    \item $G_D$ contains an isotropic cycle containing $\gamma$ that has odd length
\end{enumerate}
\end{prp}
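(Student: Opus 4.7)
The plan is to route everything through the adjoint description $\image(\uparrow) = \ker(\downarrow)^\bot = \ker(\downarrow')^\bot$ from Lemma \ref{lem:iPrimeIsI} and \eqref{eq:imageIBot}, and to translate the kernel condition into a sign condition on the graph $G_D$. Writing a general element as $v = \sum_{\alpha \in D} c_\alpha e^\alpha$ and recalling that $\downarrow_{\langle \mu \rangle}(e^\alpha) = e^{\alpha + \langle \mu \rangle}$ if $\alpha \perp \mu$ and $0$ otherwise, one sees that $v \in \ker(\downarrow')$ exactly when $c_\alpha + c_{\alpha + \mu} = 0$ for every isotropic order-two element $\mu$ and every $\alpha \in \langle \mu \rangle^\bot$. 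By the very definition of the edges of $G_D$, this is the statement that the function $\alpha \mapsto c_\alpha$ alternates signs along every edge of $G_D$.

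Since $\langle e^\gamma, v \rangle = \overline{c_\gamma}$, condition (i) is equivalent to the assertion that every such sign-alternating $c$ has $c_\gamma = 0$. I would prove (i) $\Leftrightarrow$ (ii) by comparing with bipartiteness of the connected component $C$ of $\gamma$. If $C$ admits a bipartition $A \sqcup B$ with $\gamma \in A$, setting $c_\alpha = 1$ for $\alpha \in A$, $c_\alpha = -1$ for $\alpha \in B$, and $c_\alpha = 0$ outside $C$ yields a sign-alternating function with $c_\gamma = 1$, so $e^\gamma \notin \image(\uparrow)$. Conversely, an odd cycle $(\delta_1, \ldots, \delta_n, \delta_1)$ in $C$ forces $c_{\delta_1} = (-1)^n c_{\delta_1} = -c_{\delta_1}$, hence $c_{\delta_1} = 0$; propagating this vanishing along any $G_D$-path from $\delta_1$ to $\gamma$ forces $c_\gamma = 0$ for every sign-alternating $c$, so $e^\gamma \in \image(\uparrow)$.

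For (ii) $\Leftrightarrow$ (iii), the definition given in the excerpt treats an \emph{isotropic cycle} as a closed walk, without requiring that the vertices be distinct. With this reading, (iii) $\Rightarrow$ (ii) is immediate, since any closed walk of odd length in $G_D$ implies the existence of an odd cycle in the strict sense and hence non-bipartiteness of the component containing $\gamma$. For (ii) $\Rightarrow$ (iii), I would take any strict odd cycle $(\delta_1, \ldots, \delta_n, \delta_1)$ in $C$ together with a $G_D$-path $\gamma = \gamma_0, \gamma_1, \ldots, \gamma_k = \delta_1$, and concatenate the path, the cycle and the reversed path; this yields an isotropic closed walk of odd length $2k + n$ passing through $\gamma$.

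I expect the main conceptual step to be the initial dictionary identifying $\ker(\downarrow')$ with the space of functions on $D$ that alternate signs along every edge of $G_D$; once that is set up, everything else is routine bipartite-graph bookkeeping together with the adjoint formula $\image(\uparrow) = \ker(\downarrow')^\bot$. No new structural input on discriminant forms beyond Lemma \ref{lem:iPrimeIsI} is needed.
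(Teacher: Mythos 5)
Your proof is correct, and it differs from the paper's in one substantive place. The opening dictionary is valid: for $v=\sum_\alpha c_\alpha e^\alpha$ one has $\downarrow_{\langle\mu\rangle}(v)=\sum(c_\alpha+c_{\alpha+\mu})\,e^{\alpha+\langle\mu\rangle}$, summed over the cosets $\{\alpha,\alpha+\mu\}\subset\mu^\perp$, and these cosets are exactly the edges of $G_D$; so by Lemma \ref{lem:iPrimeIsI} and \eqref{eq:imageIBot}, $e^\gamma\in\image(\uparrow)$ iff every edge-alternating function vanishes at $\gamma$. Your (i)$\Rightarrow$(ii) direction then coincides with the paper's: the $\pm1$ indicator of a bipartition of the component, extended by zero, is precisely the vector $v$ the paper feeds into $\ker(\downarrow)$. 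Where you genuinely diverge is the converse: the paper proves (iii)$\Rightarrow$(i) by exhibiting $e^\gamma$ explicitly as the telescoping sum $-\tfrac12\sum_i(-1)^i\uparrow_{\langle\mu_i\rangle}(e^{\gamma_i})$ over an odd isotropic cycle, whereas you argue dually that on a non-bipartite component every alternating function is identically zero (an odd cycle forces a zero, connectivity propagates it), hence $e^\gamma\perp\ker(\downarrow')=\ker(\downarrow)$ and $e^\gamma\in\image(\uparrow)$. Both arguments are complete; yours is more uniform, turning (i)$\Leftrightarrow$(ii) into a single statement about alternating functions, while the paper's explicit construction has the advantage of producing the actual lifting datum, which matters if one wants to write down a preimage. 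Your reading of ``isotropic cycle'' as a closed walk is the intended one (the paper's own path--cycle--reversed-path concatenation in (ii)$\Rightarrow$(iii) repeats vertices), so that step is the same standard graph-theoretic argument in both treatments.
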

\begin{proof}
We first show (i) implies (ii) by contraposition. Suppose that the connected component $G'=(V',E')$ of $G_D$ containing $\gamma$ was bipartite with decomposition $V' = A\cup B$. We can assume that $\gamma\in A$ and define
\begin{equation*}
    v := \sum_{\beta\in A}e^\beta - \sum_{\beta\in B}e^\beta.
\end{equation*}
Now let $H$ be any isotropic subgroup of order $2$. If $\beta\in H^\bot$ for some $\beta\in A$, then $\beta+H$ is an edge in $G'$ and so the other element in this coset is in $B$. By the same reasoning if $\beta\in B$ the other element of $\beta+H$ is in $A$. Therefore
\begin{align*}
    \downarrow_{H}(v) &= \sum_{\beta\in A}\downarrow_{H}(e^\beta) - \sum_{\beta\in B}\downarrow_{H}(e^\beta) \\
    &= \sum_{\beta\in A}\downarrow_{H}(e^\beta) - \sum_{\beta\in A}\downarrow_{H}(e^\beta) \\
    &= 0.
\end{align*}
By Lemma \ref{lem:iPrimeIsI} this implies that $v\in\ker(\downarrow)$. Since $\langle v,e^\gamma\rangle = 1$, we know
\begin{equation*}
    e^\gamma\not\in\ker(\downarrow)^\bot = \image(\uparrow).
\end{equation*}
Now assume that the connected component $G'=(V',E')$ of $G_D$ containing $\gamma$ is not bipartite. We want to show that $G'$ contains an isotropic cycle containing $\gamma$ that has odd length. It is a well-known fact from graph theory that a graph is bipartite if and only if it contains no cycle of odd length. So let $(\beta_1,\hdots,\beta_n)$ with $n$ odd be an isotropic cycle in $G'$, which must exist by assumption. Since $G'$ is connected there exists a path $(\gamma,\gamma_1,\hdots,\gamma_m,\beta_1)$ in $G'$. But then
\begin{align*}
    (\gamma,\gamma_1,\hdots,\gamma_m,\beta_1,\hdots,\beta_n,\beta_1,\gamma_m,\hdots,\gamma_1)
\end{align*}
is an isotropic cycle containing $\gamma$ of length $1+m+n+1+m = 2m+2+n$, which is odd. \\
Finally we want to show (iii) implies (i) so let $(\gamma_1,\hdots,\gamma_n)$ be an isotropic cycle with $n$ odd and $\gamma_1 = \gamma$ and set $\gamma_{n+1} = \gamma_1$. Let $\mu_i = \gamma_{i+1}-\gamma_i$ for $i=1,\hdots,n$. By definition $\mu_i$ is isotropic and orthogonal to $\gamma_i$, so 
\begin{equation*}
    -\frac{1}{2}\sum_{i=1}^n(-1)^i\uparrow_{\langle\mu_i\rangle}(e^{\gamma_i})
\end{equation*}
is well-defined and in $\image(\uparrow)$. We have
\begin{align*}
    -\frac{1}{2}\sum_{i=1}^n(-1)^i\uparrow_{\langle\mu_i\rangle}(e^{\gamma_i}) &= -\frac{1}{2}\sum_{i=1}^n(-1)^i(e^{\gamma_i} + e^{\gamma_{i}+\mu_i}) \\
    &= -\frac{1}{2}\sum_{i=1}^n(-1)^i(e^{\gamma_i} + e^{\gamma_{i+1}}) \\
    &= \frac{1}{2}(e^{\gamma_1} + e^{\gamma_{n+1}}) = e^{\gamma}.
\end{align*}
\end{proof}
If $\gamma^\bot$ contains an isotropic subgroup isomorphic to $(\Z/2\Z)^2$ then Lemma \ref{lem:eGammaInImage} implies that in this case an isotropic cycle of odd length containing $\gamma$ exists. In fact if
\begin{equation*}
    \{0,\mu_1,\mu_2,\mu_1+\mu_2\}
\end{equation*}
is such a group, then
\begin{equation*}
    (\gamma,\gamma+\mu_1,\gamma+\mu_1+\mu_2) 
\end{equation*}
is an isotropic cycle containing $\gamma$ of length $3$. \\
Lemma \ref{lem:onlyOneIso} also holds for $p=2$:
\begin{lem}\label{lem:onlyOneIso2}
	Let $\gamma\in D$. If $e^\gamma\in\image(\uparrow)$ then $\gamma^\bot$ contains at least two isotropic subgroups of order $2$.
\end{lem}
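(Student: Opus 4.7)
My plan is to prove the contrapositive: assuming $\gamma^{\bot}$ contains at most one isotropic subgroup of order $2$, I will show that $e^{\gamma}\notin\image(\uparrow)$. By Proposition \ref{prp:eGammaIff21} it suffices to show that the connected component of $\gamma$ in $G_D$ is bipartite. If $\gamma^{\bot}$ contains no isotropic subgroup of order $2$, then for every isotropic $\nu\in D$ of order $2$ we have $(\nu,\gamma)\neq 0$, so $\gamma$ is isolated in $G_D$ and its component $\{\gamma\}$ is trivially bipartite.

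Assume instead that $\langle\mu\rangle$ is the unique isotropic subgroup of order $2$ in $\gamma^{\bot}$. Then $\gamma+\mu$ is the only neighbor of $\gamma$, and the remaining neighbors of $\gamma+\mu$ are the vertices $\gamma+\mu+\nu$ where $\nu$ ranges over
\[
	E=\{\nu\in D\mid\nu\text{ isotropic of order }2,\ (\nu,\gamma)=(\nu,\mu)=\tfrac{1}{2}\}.
\]
I will show that the component of $\gamma$ is the star with centre $\gamma+\mu$, leaves $\gamma$ and $\{\gamma+\mu+\nu\}_{\nu\in E}$, hence a tree and therefore bipartite. As a preliminary step I verify that any two distinct $\nu_1,\nu_2\in E$ satisfy $(\nu_1,\nu_2)=\tfrac{1}{2}$: otherwise $\nu_1+\nu_2$ would be an isotropic element of order $\leq 2$ in $\gamma^{\bot}$, forcing $\nu_1+\nu_2\in\{0,\mu\}$; the first alternative contradicts distinctness and the second fails because $\q(\mu+\nu_1)=\tfrac{1}{2}$ would make $\nu_2=\mu+\nu_1$ non-isotropic.

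The main step is to show that each $\gamma+\mu+\nu$ with $\nu\in E$ has degree one in $G_D$. An isotropic $\beta$ of order $2$ lies in $(\gamma+\mu+\nu)^{\bot}$ precisely when $(\beta,\gamma)+(\beta,\mu)+(\beta,\nu)\equiv 0\bmod 1$, which leaves four possible sign patterns in $\{0,\tfrac{1}{2}\}^{3}$. The two patterns with $(\beta,\gamma)=0$ put $\beta\in\{0,\mu\}$ by hypothesis, and only $\beta=0$ survives the remaining pairing conditions (the value $(\mu,\nu)=\tfrac{1}{2}$ rules out $\beta=\mu$). The pattern $(\beta,\gamma)=(\beta,\mu)=\tfrac{1}{2},\ (\beta,\nu)=0$ places $\beta$ in $E$, and the preliminary observation then forces $\beta=\nu$. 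The genuinely delicate pattern is $(\beta,\gamma)=(\beta,\nu)=\tfrac{1}{2},\ (\beta,\mu)=0$; here I will consider $\alpha:=\mu+\nu+\beta$ and compute $\q(\alpha)=0$ and $(\alpha,\gamma)=0$, so $\alpha$ is an isotropic order-$\leq 2$ element of $\gamma^{\bot}$, hence $\alpha\in\{0,\mu\}$. But $\alpha=0$ would give $\beta=\mu+\nu$ which is non-isotropic, and $\alpha=\mu$ would give $\beta=\nu$, contradicting $(\beta,\mu)=0$. This pairing analysis is the main obstacle; once complete, the component of $\gamma$ is the star described above, and Proposition \ref{prp:eGammaIff21} yields $e^{\gamma}\notin\image(\uparrow)$.
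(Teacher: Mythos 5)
Your argument is correct, but it takes a genuinely different route from the paper's. Both proofs go through Proposition \ref{prp:eGammaIff21}: the paper uses characterization (iii) and argues by induction on the length of an odd isotropic cycle through $\gamma$, showing that under the uniqueness hypothesis any such cycle of length $n\geq 5$ can be shortened to one of length $n-2$, until a $3$-cycle produces an isotropic subgroup isomorphic to $(\Z/2\Z)^2$ in $\gamma^\bot$, a contradiction. You instead use characterization (ii) and determine the connected component of $\gamma$ in $G_D$ completely: it is the star with centre $\gamma+\mu$, hence a tree, hence bipartite. The local computations have the same flavour in both proofs (sums such as $\nu_1+\nu_2$ or $\mu+\nu+\beta$ are shown to be isotropic of order at most $2$ and orthogonal to $\gamma$, hence forced to be $0$ or $\mu$, which is then refuted via $\q(\mu+\nu)=\tfrac{1}{2}$), but your version yields strictly more information, namely the full structure of the component, and it also covers explicitly the degenerate case where $\gamma^\bot$ contains no isotropic element of order $2$, while the paper's cycle-shortening argument is somewhat shorter to write. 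Two cosmetic points, neither affecting correctness: an element $\beta$ of order $2$ is never $0$, so in your pattern analysis the right conclusion is that no admissible $\beta$ with $(\beta,\gamma)=0$ exists at all (rather than that ``only $\beta=0$ survives''); and in the pattern $(\beta,\gamma)=0$, $(\beta,\mu)=(\beta,\nu)=\tfrac{1}{2}$ it is $(\mu,\mu)=0$ rather than $(\mu,\nu)=\tfrac{1}{2}$ that rules out $\beta=\mu$.
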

\begin{proof}
	Suppose that $\mu_1$ is the only isotropic element of order $2$ in $\gamma^\bot$ and that $(\gamma_0,\gamma_1,\gamma_2,\hdots,\gamma_{n-1})$ is an isotropic cycle of length $n$ with $n$ odd and $\gamma_0=\gamma$. Then $\mu_1 = \gamma_1-\gamma_0$ and we set $\mu_i = \gamma_i - \gamma_{i-1}$ for $i=2,\hdots,n-1$ and $\mu_n = \gamma_0-\gamma_n$. Note that $\gamma_i = \gamma+\sum_{j=1}^i\mu_j$, $(\mu_i,\gamma_{i-1}) = (\mu_i,\gamma_{i}) = 0\bmod1$ and $\q(\gamma_i) = \q(\gamma)\bmod1$ for all $i=1,\hdots n$. If $n=3$, then $\{0,\mu_1,\mu_2,\mu_3\}$ is an isotropic subgroup isomorphic to $(\Z/2\Z)^2$ in $\gamma^\bot$. Hence, $n\geq5$. We show that we can construct an isotropic cycle of length $n-2$. Then, recursively we can find an isotropic cycle of length $3$, which is a contradiction. If $\mu_{i}=\mu_{i+1}$ for some $i=1,\hdots,n$, then $(\gamma_0,\hdots,\gamma_{i-1},\gamma_{i+2},\hdots,\gamma_n)$ is an isotropic cycle of length $n-2$. So assume that $\mu_{i}\not=\mu_{i+1}$, in particular $\mu_1\not=\mu_2$. This implies
	\[ (\mu_2,\mu_1) = (\mu_2,\gamma) = 1/2\mod1. \]
	If $\mu_3=\mu_1$, then $\gamma_3 = \gamma+\mu_2$ and so
	\[ (\gamma,\mu_2) = \q(\gamma_3) - \q(\gamma) - \q(\mu_2) = 0\mod1, \]
	which is a contradiction. Therefore, $(\mu_3,\gamma) = 1/2\bmod1$ and so $(\mu_3,\mu_1+\mu_2) = 1/2\bmod1$. This implies that $\mu_1+\mu_2+\mu_3$ is isotropic and in $\gamma^\bot$, i.e. equal to $\mu_1$. But then $(\gamma_0,\gamma_3,\hdots,\gamma_{n-1})$ is an isotropic cycle of length $n-2$.
\end{proof}

As we did for odd primes we want to find those discriminant forms that do not contain an isotropic subgroup isomorphic to $(\Z/2\Z)^3$. Again it is useful to decompose the discriminant form $D$ into those components where all elements of order $2$ are isotropic, and those where this is not the case. The former is the case for Jordan components of type $4_{I\!I}^{\pm n}$ and irreducible components generated by elements of order divided by $8$. The latter therefore consists of components of type $2_{I\!I}^{\pm n}$, $2_{t}^{\pm n}$ and $4_{s}^{\pm m}$. Furthermore note that
\begin{align*}
    2_t^{\epsilon}\cong2_{t+4}^{-\epsilon}
\end{align*}
and $\epsilon e(t/8)$ and $\epsilon\leg{t}{2}$ are invariant under this change of symbols. Therefore, we can always assume that $\epsilon=+1$ and $2_t^{\epsilon n}$ is generated by pairwise orthogonal elements $\mu_1,\hdots,\mu_n$ with $\q(\mu_i) = t_i/4\bmod1$, where $t_i=\pm 1$.

For discriminant forms of level $p$ odd it was easy to see, when they do not contain any isotropic subgroups of large rank. For discriminant forms consisting of components of type $2_{I\!I}^{\pm n}$, $2_{t}^{\pm n}$ and $4_{s}^{\pm m}$ it is more difficult. We will do this in the next three lemmata.


\begin{lem}\label{lem:D1}
Let $\mathcal{D}_1'$ be the set of discriminant forms $D$ such that $D$ is a sum of components of type $2_{I\!I}^{\pm n}$, $2_{t}^{\pm n}$ and $4_{s}^{\pm m}$ and $D$ contains no non-trivial isotropic elements. Then $\mathcal{D}_1'$ consists of the following discriminant forms:
\begin{align*}
    &\{0\}, \ 2_{I\!I}^{-2}, \ 2_{t}^{\pm1}, \\
    &2_{t}^{\pm2}, \ \text{where }t=2\bmod4, \\
    &2_{t}^{\epsilon3}, \ \text{where }\epsilon\leg{t}{2}=-1, \\
    &4_{t}^{\pm1}, \ 2_{t}^{\pm1}4_{s}^{\pm1}.
\end{align*}
\end{lem}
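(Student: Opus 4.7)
The plan is to fix a Jordan decomposition $D = A \oplus B \oplus C$ where $A$ collects the $2_{I\!I}^{\pm n}$ summands, $B$ the $2_t^{\pm n}$ summands, and $C$ the $4_s^{\pm m}$ summands, and then bound each piece using the constraint that $D$ contains no non-trivial isotropic element. Since Jordan decompositions are not unique, the list in the lemma is really a list of isomorphism classes, and the final step will be to absorb one mixed case into a rank-$3$ entry via a Jordan equivalence.

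First I would show $\rk(C) \le 1$: if $C$ contained two orthogonal generators $x_1,x_2$ of order $4$ with norms $s_i/8 \bmod 1$ (with $s_i$ odd), then $2x_1+2x_2 \ne 0$ has norm $(s_1+s_2)/2 \equiv 0 \bmod 1$, contradicting the assumption. Next, $A$ carries a non-degenerate quadratic form with values in $(1/2)\Z/\Z \cong \mathbb{F}_2$, and the only anisotropic such forms over $\mathbb{F}_2$ are $\{0\}$ and the rank-$2$ form $2_{I\!I}^{-2}$. For $B$, write $B = \bigoplus_i 2_{t_i}^{\epsilon_i 1}$ with orthogonal generators $\mu_i$ of norm $t_i/4$ where $t_i = \pm 1$; a non-empty subset sum $\sum_{i\in S}\mu_i$ is isotropic iff $\sum_{i\in S} t_i \equiv 0 \bmod 4$. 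Taking $|S|=2$ forces all $t_i$ to share one sign, and taking $|S|=4$ gives $\pm 4 \equiv 0$, so $\rk(B) \le 3$. In rank $3$ with uniform signs a direct computation on the Conway-Sloane symbol yields $\epsilon\leg{t}{2}=-1$, while in rank $2$ the condition $t_1 = t_2$ is equivalent to $t \equiv 2 \bmod 4$.

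For the cross-component analysis, if $A = 2_{I\!I}^{-2}$, then every non-zero element of $A$ has norm $1/2 \bmod 1$, so no element of $B \oplus C$ may have norm $1/2 \bmod 1$. This excludes $2x_1 \in C$ (norm $\pm 1/2$) and any pair sum $\mu_i+\mu_j\in B$ with same-sign generators (also $\pm 1/2$), forcing $C = \{0\}$ and $\rk(B) \le 1$. Symmetrically, if $C \ne \{0\}$, the element $2x_1$ of norm $\pm 1/2$ already forces $\rk(B) \le 1$ by the same argument. Enumerating the resulting isomorphism types and absorbing the mixed case $2_{I\!I}^{-2} \oplus 2_t^{\pm 1}$ into the rank-$3$ entry via the Jordan equivalence $2_{I\!I}^{-2} \oplus 2_t^{\pm 1} \cong 2_{t'}^{\epsilon' 3}$ (for appropriate $t', \epsilon'$ with $\epsilon'\leg{t'}{2}=-1$) yields exactly the seven isomorphism types in the lemma. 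The forward direction --- that each listed form is indeed anisotropic --- is a short check on the norms of non-zero subset sums in each diagonal form.

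The main obstacle I anticipate is the 2-adic genus-symbol bookkeeping: translating ``all diagonal entries share a sign'' into the invariant condition $\epsilon\leg{t}{2}=-1$ in the rank-$3$ case, and verifying the Jordan equivalence $2_{I\!I}^{-2} \oplus 2_t^{\pm 1} \cong 2_{t'}^{\epsilon' 3}$ used for the absorption. Both are classical but require careful attention to the signs and the $t$-parameters in the Conway-Sloane conventions; the rest of the argument is a finite case check once the rank bounds are in place.
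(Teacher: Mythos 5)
Your proposal is correct and proceeds essentially as the paper does: an elementary case analysis on the $2$-adic components, bounding each piece by exhibiting isotropic subset sums and then matching Conway--Sloane symbols (the level-$4$ enumeration that the paper dismisses as an easy exercise is exactly your uniform-sign/rank-$\leq3$ argument). The only organizational difference is that you split the scale-$2$ part into indecomposable blocks, so that $2_{I\!I}^{-2}$ and odd blocks can coexist and you must absorb $2_{I\!I}^{-2}\oplus 2_t^{\pm1}$ into $2_{t'}^{\epsilon'3}$ at the end, whereas the paper treats the scale-$2$ Jordan constituent as a single even or odd component and never needs that absorption; both routes are sound.
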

\begin{proof}
Let $D$ be a sum of components of type $2_{I\!I}^{\pm n}$, $2_{t}^{\pm n}$ and $4_{s}^{\pm m}$. First we assume that $D$ is of type $2_{I\!I}^{\pm n}$. By definition $2_{I\!I}^{+2}$ contains a non-trivial isotropic element and if $n\geq4$, we can write $2_{I\!I}^{\pm n}\cong2_{I\!I}^{+2}\oplus2_{I\!I}^{\pm (n-2)}$, so that only $\{0\}$ and $2_{I\!I}^{-2}$ are in $\mathcal{D}_1'$. \\
Now we assume that $D$ has level $4$, i.e. $D$ is of type $2_t^{\pm n}$. It is an easy exercise to see that $2_{t}^{\pm1}$, $2_{t}^{\pm2}$ with $t=2\bmod4$ and $2_{t}^{\epsilon3}$ with $\epsilon\leg{t}{2}=-1$ are the only discriminant forms that do not contain non-trivial isotropic elements. \\
Now we assume that $D$ has level $8$, so $D$ is of the form $4_s^{\pm n}$, $2_{I\!I}^{-2}4_s^{\pm n}$ or $2_t^{\pm m}4_s^{\pm n}$. If $n\geq2$ then it is again easy to see that there is an isotropic element of order $2$. So $n=1$ and $4_s^{\pm1}$ is generated by an element $\gamma$ and $\q(2\gamma) = 1/2\bmod1$. Therefore, the other component must not contain a non-trivial element of norm $0$ or $1/2$. This is the case only for $\{0\}$ and $2_t^{\pm 1}$.
\end{proof}

Now we consider isotropic subgroups isomorphic to $(\Z/2\Z)^2$.

\begin{lem}\label{lem:D2}
Let $\mathcal{D}_2'$ be the set of discriminant forms $D$ such that $D$ is a sum of components of type $2_{I\!I}^{\pm n}$, $2_{t}^{\pm n}$ and $4_{s}^{\pm m}$ and $D$ contains no isotropic subgroup isomorphic to $(\Z/2\Z)^2$. Then $\mathcal{D}_2'$ consists of the following discriminant forms:
\begin{align*}
    &\{0\}, \ 2_{I\!I}^{\pm2}, \ 2_{I\!I}^{-4}, \\
    &2_t^{\pm n}, \ \text{where } n\leq3 \\
    &2_t^{\epsilon 4}, \ \text{where } \epsilon e(t/8) \not= 1 \\
    &2_t^{\epsilon 5}, \ \text{where } \epsilon\leg{t}{2} = -1 \\
    &4_s^{\pm1}, \ 2_{I\!I}^{\pm2}4_s^{\pm1}, \\
    &2_t^{\pm n}4_s^{\pm1}, \ \text{where } n\leq3 \\
    &4_s^{\pm2}, \ 2_t^{\pm 1}4_s^{\pm2}.
\end{align*}
\end{lem}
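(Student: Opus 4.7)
Plan: The argument follows the same case-by-case template as Lemma \ref{lem:D1}. The key observation is that any isotropic subgroup isomorphic to $(\Z/2\Z)^2$ must lie in the $2$-torsion subgroup $D[2]=\{\gamma\in D\mid 2\gamma=0\}$, so I only need to analyse $D[2]$ with its induced quadratic form. Writing $D = D^{(2)}\oplus D^{(4)}\oplus D^{(8)}$ for its level-$2$, level-$4$, and level-$8$ parts (sums of $2_{I\!I}$, $2_t$, and $4_s$ components respectively), one has $D[2] = D^{(2)}\oplus D^{(4)}\oplus 2D^{(8)}$. A generator $\gamma$ of a $4_s^{\pm1}$ satisfies $\q(2\gamma)=s/2\bmod 1$, so $2D^{(8)}$ with its induced form is of $2_s$-type, and I can feed it back into the analysis of level-$2$ and level-$4$ data.

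Next I enumerate Jordan types. For $D=2_{I\!I}^{\pm n}$, the form $2_{I\!I}^{+2}$ is hyperbolic with only rank-one isotropic subgroups, $2_{I\!I}^{-2}$ is anisotropic, and a direct count of isotropic elements (equivalently, the Arf invariant) shows that $2_{I\!I}^{+4}$ contains a rank-two isotropic subgroup while $2_{I\!I}^{-4}$ does not; any $2_{I\!I}^{\pm n}$ with $n\geq 6$ splits off a copy of $2_{I\!I}^{+4}$. For $D=2_t^{\epsilon n}$ with orthogonal generators $\mu_i$ of norm $t_i/4$, $t_i=\pm1$, an element $\sum a_i\mu_i$ with $a_i\in\{0,1\}$ is isotropic iff $\sum a_it_i\equiv 0\pmod 4$; tabulating in terms of $(n_+,n_-)$, the number of $+1$s and $-1$s among the $t_i$, I check exactly when two linearly independent isotropic combinations with isotropic sum exist. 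This yields all $n\leq 3$, for $n=4$ the condition $\epsilon e(t/8)\neq 1$, for $n=5$ the condition $\epsilon\leg{t}{2}=-1$, and nothing for $n\geq 6$. Mixed forms $2_{I\!I}^{\pm 2}\oplus 2_t^{\pm m}$ are absorbed into the pure $2_t$ list via the standard oddity-fusion identities at $2$.

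For $D^{(8)}\neq 0$ I split by the rank $m$ of the $4_s$ part. If $m=1$, I replace $4_s^{\pm 1}$ by $\langle 2\gamma\rangle$ and apply the preceding analysis to $D^{(2)}\oplus D^{(4)}\oplus 2_s^{\pm 1}$; this produces the entries $4_s^{\pm 1}$, $2_{I\!I}^{\pm 2}4_s^{\pm 1}$, and $2_t^{\pm n}4_s^{\pm 1}$ for $n\leq 3$. If $m=2$, the $2$-torsion of $4_s^{\pm 2}$ is a rank-two quadratic space over $\mathbb{F}_2$ whose type is determined by the $s$-invariants; a direct check forces $D^{(2)}\oplus D^{(4)}$ to be trivial or $2_t^{\pm 1}$, giving $4_s^{\pm 2}$ and $2_t^{\pm 1}4_s^{\pm 2}$. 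If $m\geq 3$, then $4_s^{\pm 3}$ contains a rank-three quadratic space over $\mathbb{F}_2$ in its $2$-torsion, which always admits a rank-two isotropic subspace, so no entry with three or more $4_s$-blocks appears.

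The main obstacle is the careful book-keeping for the $n=4$ and $n=5$ subcases, where the parity information carried by $(n_+,n_-)$ must be translated into the genus invariants $\epsilon e(t/8)$ and $\epsilon\leg{t}{2}$ of $2_t^{\epsilon n}$, and more generally in ensuring that the non-uniqueness of the Jordan decomposition at $p=2$ does not cause isomorphic discriminant forms to be both listed multiple times and missed in the final enumeration; handling the $2_{I\!I}^{\pm 2}$ summands via oddity fusion is what keeps the resulting list free of duplicates.
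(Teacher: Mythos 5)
Your case division (pure level-$2$ forms, pure odd level-$4$ forms, then level-$8$ forms split by the number of $4_s$-blocks) matches the paper's, and your direct $(n_+,n_-)$-count on a diagonalized $2_t^{\epsilon n}$ is a legitimate alternative to the paper's route, which instead passes to $\langle\gamma\rangle^\bot/\langle\gamma\rangle$ for an isotropic $\gamma$ of order $2$ and invokes Lemma \ref{lem:D1}. The $m=2$ and $m\geq3$ cases are also fine.

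There is, however, a genuine gap in your $m=1$ case. The $2$-torsion $\langle 2\gamma\rangle$ of $4_s^{\pm1}$ is \emph{not} of $2_s$-type: its generator has $\q(2\gamma)=1/2\bmod 1$ (not $\pm1/4$) and pairs trivially with every element of order $2$, so inside $D[2]$ it lies in the radical of the bilinear form, unlike the generator of any $2_t^{\pm1}$. Feeding $D^{(2)}\oplus D^{(4)}\oplus 2_{s}^{\pm1}$ back into the level-$\leq4$ analysis therefore gives the wrong answer in both directions: $2_t^{\epsilon4}\oplus 2_{s'}^{\pm1}\cong 2_{t'}^{\epsilon'5}$ lies in $\mathcal{D}_2'$ for suitable parameters, whereas $2_t^{\epsilon4}4_s^{\pm1}$ never does (e.g.\ for signs $(+,+,+,-)$ the elements $\mu_1+\mu_4$ and $\mu_2+\mu_3+2\gamma$ are isotropic with isotropic sum $\mu_1+\mu_2+\mu_3+\mu_4+2\gamma$); conversely $2_t^{\pm3}4_s^{\pm1}$ is always in $\mathcal{D}_2'$ while $2_t^{\pm3}\oplus 2_{s'}^{\pm1}\cong 2_{t''}^{\pm4}$ sometimes is not. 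So your stated output for $m=1$ is correct but is not what your reduction produces. The criterion you actually need (and which the paper uses) is: $A'\oplus 4_s^{\pm1}$ contains an isotropic $(\Z/2\Z)^2$ if and only if $A'$ contains either such a subgroup or two distinct orthogonal elements $\mu_1,\mu_2$ with $\q(\mu_1)=\q(\mu_2)=1/2\bmod1$, in which case $\mu_1+2\gamma$ and $\mu_2+2\gamma$ span one. Running the finite list of level-$\leq4$ members of $\mathcal{D}_2'$ through this test eliminates exactly $2_{I\!I}^{-4}$, $2_t^{\epsilon4}$ and $2_t^{\epsilon5}$ and yields the entries of the lemma.
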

\begin{proof}
Let $D$ be a sum of components of type $2_{I\!I}^{\pm n}$, $2_{t}^{\pm n}$ and $4_{s}^{\pm m}$. First we assume that $D$ is of type $2_{I\!I}^{\pm n}$. Of course $2_{I\!I}^{\pm 2}\cong(\Z/2\Z)^2$, which contains an anisotropic element and so $2_{I\!I}^{\pm 2}\in\mathcal{D}_2'$. If $n\geq4$ we can write $2_{I\!I}^{\pm n}\cong2_{I\!I}^{+ 2}\oplus2_{I\!I}^{\pm (n-2)}$ and $2_{I\!I}^{\pm (n-2)}$ must be in $\mathcal{D}_1'$, so we only get $2_{I\!I}^{-4}\in\mathcal{D}_2'$. \\
Now we assume that $D$ has level $4$. Then $D$ is of type $2_t^{\epsilon n}$. If $D\in\mathcal{D}_1'$ then also $D\in\mathcal{D}_2'$. Otherwise $D$ contains an isotropic element $\gamma$ of order $2$ and $\langle\gamma\rangle^\bot/\langle\gamma\rangle\cong 2_t^{\epsilon (n-2)}$ or $\langle\gamma\rangle^\bot/\langle\gamma\rangle\cong 2_{I\!I}^{\epsilon' (n-2)}$, with $\epsilon' = \epsilon(-1)^{(t-4)/2}$. Now $D\in\mathcal{D}_2'$ if and only if $\langle\gamma\rangle^\bot/\langle\gamma\rangle\in\mathcal{D}_1'$. This shows that $D\in\mathcal{D}_2'$ when $n\leq3$. For $n=4$ we see that $D\in\mathcal{D}_2
$ if $\epsilon e(t/8) = e(\sign(D)/8) \not=1$ and for $n=5$ if $\epsilon\leg{t}{2} = -1$. \\
Now we assume that $D$ has level $8$ and so $D = 2_{I\!I}^{\pm m}4_s^{\pm n}$ or $D = 2_{t}^{\pm m}4_s^{\pm n}$. If $n=1$ then $4_s^{\pm 1}$ is generated by an element $\gamma$ of order $4$ and $\q(2\gamma) = 1/2\bmod1$. So $2_{I\!I}^{\pm m}$ respectively $2_{t}^{\pm m}$ must not contain any isotropic subgroup isomorphic to $(\Z/2\Z)^2$, but also no distinct $\mu_1,\mu_2$ with $(\mu_1,\mu_2)=0\bmod1$ and $\q(\mu_1)=\q(\mu_2) = 1/2\bmod1$, because in the latter case $\mu_1+2\gamma$ and $\mu_2+2\gamma$ generate an isotropic subgroup isomorphic to $(\Z/2\Z)^2$. We have already seen that only
\begin{align*}
    &\{0\}, \ 2_{I\!I}^{\pm2}, \ 2_{I\!I}^{-4}, \\
    &2_t^{\pm n}, \ \text{where } n\leq3 \\
    &2_t^{\epsilon 4}, \ \text{where } \epsilon e(t/8) \not= 1 \\
    &2_t^{\epsilon 5}, \ \text{where } \epsilon\leg{t}{2} = -1
\end{align*}
contain no isotropic subgroup isomorphic to $(\Z/2\Z)^2$. Going through this finite list of discriminant forms one finds that only $2_{I\!I}^{-4}$, $2_t^{\epsilon 4}$ and $2_t^{\epsilon 5}$ contain elements $\mu_1,\mu_2$ with the above stated norms. \\
Now we consider $n=2$. Then $4_s^{\pm 2}$ is generated by two anisotropic elements $\gamma,\beta$ of order $4$ orthogonal to each other and $\q(2\gamma+2\beta) = 0\bmod1$ and $\q(2\gamma) = \q(2\beta) = 1/2\bmod1$. And so the other component must not contain any non-trivial element $\mu$ of norm $0$ or $1/2$. This is the case only for $\{0\}$ and $2_t^{\pm 1}$. \\
Finally if $n\geq3$, $4_s^{\pm 3}$ is generated by three anisotropic elements $\gamma,\beta,\mu$ of order $4$ all pairwise orthogonal and $\q(\gamma) = s_1/8\bmod1$, $\q(\beta) = s_2/8\bmod1$ and $\q(\mu) = s_3/8\bmod1$. But then $2\gamma+2\beta$ and $2\gamma+2\mu$ generate an isotropic subgroup isomorphic to $(\Z/2\Z)^2$. This concludes the proof.
\end{proof}

Similarly one proves the result for $(\Z/2\Z)^3$. One gets

\begin{lem}\label{lem:D3}
Let $\mathcal{D}_3'$ be the set of discriminant forms $D$ such that $D$ is a sum of components of type $2_{I\!I}^{\pm n}$, $2_{t}^{\pm n}$ and $4_{s}^{\pm m}$ and $D$ contains no isotropic subgroup isomorphic to $(\Z/2\Z)^3$. Then $\mathcal{D}_3'$ consists of the following discriminant forms:
\begin{align*}
    &\{0\}, \ 2_{I\!I}^{\pm2}, \ 2_{I\!I}^{\pm4}, \ 2_{I\!I}^{-6}, \\
    &2_t^{\pm n}, \ \text{where } n\leq5 \\
    &2_t^{\epsilon 6}, \ \text{where } \epsilon e(t/8) \not= 1 \\
    &2_t^{\epsilon 7}, \ \text{where } \epsilon\leg{t}{2} = -1 \\
    &4_s^{\pm1}, \ 2_{I\!I}^{\pm2}4_s^{\pm1}, \ 2_{I\!I}^{\pm4}4_s^{\pm1}, \\
    &2_t^{\pm n}4_s^{\pm1}, \ \text{where } n\leq5 \\
    &4_s^{\pm2}, \ 2_{I\!I}^{\pm 2}4_s^{\pm2} \\
    &2_t^{\pm n}4_s^{\pm2}, \ \text{where } n\leq3 \\
    &4_s^{\pm3}, \ 2_t^{\pm 1}4_s^{\pm3}.
\end{align*}
\end{lem}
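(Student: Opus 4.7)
The plan is to mirror the proof of Lemma \ref{lem:D2}, using the inductive reduction $D\in\mathcal{D}_3'$ if and only if $\langle\gamma\rangle^\bot/\langle\gamma\rangle\in\mathcal{D}_2'$ for every isotropic $\gamma\in D$ of order $2$. This is the natural step up: an isotropic subgroup isomorphic to $(\Z/2\Z)^3$ in $D$ corresponds to an isotropic subgroup isomorphic to $(\Z/2\Z)^2$ in the quotient $\langle\gamma\rangle^\bot/\langle\gamma\rangle$ for some isotropic $\gamma$ of order $2$. I would split the argument by level.

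For level $2$, $D$ is of type $2_{I\!I}^{\pm n}$. Since $2_{I\!I}^{+2}$ has a non-trivial isotropic element, I would decompose $2_{I\!I}^{\pm n}\cong 2_{I\!I}^{+2}\oplus 2_{I\!I}^{\pm(n-2)}$ for $n\geq4$ and reduce to the condition $2_{I\!I}^{\pm(n-2)}\in\mathcal{D}_2'$, yielding the allowed ranks $0,2,4$ and, for rank $6$, only the negative sign. For level $4$, $D=2_t^{\epsilon n}$: if $D\in\mathcal{D}_2'$ then automatically $D\in\mathcal{D}_3'$; otherwise pick an isotropic $\gamma$ of order $2$ and check when $\langle\gamma\rangle^\bot/\langle\gamma\rangle$, which is of type $2_t^{\epsilon(n-2)}$ or $2_{I\!I}^{\epsilon'(n-2)}$ (with $\epsilon'=\epsilon(-1)^{(t-4)/2}$), lies in $\mathcal{D}_2'$. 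This forces the small rank cases together with $2_t^{\epsilon 6}$ when $\epsilon e(t/8)\neq1$ and $2_t^{\epsilon 7}$ when $\epsilon\leg{t}{2}=-1$.

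For level $8$, I would decompose $D$ as $2_{I\!I}^{\pm m}4_s^{\pm n}$ or $2_t^{\pm m}4_s^{\pm n}$ and treat the cases $n=1,2,3,\geq4$ separately, exactly as in Lemma \ref{lem:D2}. In the $n=1$ case, $4_s^{\pm1}$ is generated by $\gamma$ with $2\gamma$ isotropic of order $2$, so the $(\Z/2\Z)^3$ subgroups in $D$ correspond to $(\Z/2\Z)^2$ subgroups in the orthogonal complement of $2\gamma$ in the $2$-part, possibly shifted by $2\gamma$. This reduces to requiring that the $2$-part component lies in $\mathcal{D}_2'$ and in addition contains no pair of distinct orthogonal elements both of norm $1/2$; running through the $\mathcal{D}_2'$ list from Lemma \ref{lem:D2} picks out exactly the ones listed. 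For $n=2$, $4_s^{\pm2}$ already contributes three pairwise orthogonal norm-$1/2$ elements ($2\gamma$, $2\beta$, $2\gamma+2\beta$), and combined with the other component this will give $(\Z/2\Z)^3$ unless the $2$-part component has no non-trivial element of norm $0$ or $1/2$, which restricts to $\{0\}$ and $2_t^{\pm1}$, together with the higher-rank even components $2_{I\!I}^{\pm 2}$ and $2_t^{\pm n}$ with $n\leq 3$ once one also tracks the $(\Z/2\Z)^2$-in-$2$-part possibilities. For $n=3$, the three generators $\gamma,\beta,\mu$ of $4_s^{\pm3}$ already produce an isotropic $(\Z/2\Z)^2$ spanned by $2\gamma+2\beta$ and $2\gamma+2\mu$, so only the trivial extension by $2_t^{\pm1}$ or $\{0\}$ survives, and for $n\geq4$ no extension works.

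The main obstacle will be the bookkeeping in the level $8$ case with $n=1$: I have to be careful to track not just isotropic $(\Z/2\Z)^2$ subgroups of the $2$-part but also those subgroups spanned by elements shifted by $2\gamma$, which amounts to finding either an isotropic $(\Z/2\Z)^2$, or two orthogonal norm-$1/2$ elements, or one of each, inside the $2$-part. In particular the boundary cases $2_{I\!I}^{-4}4_s^{\pm1}$ and $2_t^{\epsilon4}4_s^{\pm1}$, $2_t^{\epsilon5}4_s^{\pm1}$ require checking whether the relevant norm-$1/2$ pair is present; the claim is that it is in all of these, so only $2_{I\!I}^{\pm2}4_s^{\pm1}$, $2_{I\!I}^{\pm4}4_s^{\pm1}$ and $2_t^{\pm n}4_s^{\pm1}$ with $n\leq5$ make it into $\mathcal{D}_3'$. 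Once the finite enumerations are carried out this way, the list in the lemma is obtained directly.
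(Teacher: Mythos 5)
Your overall strategy---quotienting along isotropic order-$2$ elements for the $2_{I\!I}$ and $2_t$ parts and treating the $4_s^{\pm n}$ component by hand---is the one the paper intends (it offers no proof beyond ``similarly one proves the result''), and your level-$2$, level-$4$, $n=2$, $n=3$ and $n\geq4$ cases are essentially sound. Two small cautions there: the opening ``if and only if'' reduction to $\langle\gamma\rangle^\bot/\langle\gamma\rangle\in\mathcal{D}_2'$ is only valid when the relevant preimages consist of elements of order $2$ (an isotropic $(\Z/2\Z)^2$ in the quotient lifts a priori only to an isotropic subgroup of order $8$, which need not be elementary abelian once order-$4$ elements are around), though this is harmless where you actually use it; and for $4_s^{\pm1}=\langle\gamma\rangle$ one has $\q(2\gamma)=4\q(\gamma)=1/2$, so $2\gamma$ is \emph{anisotropic}, not isotropic as you wrote---this is the whole point of the $4_s$ analysis.

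The genuine gap is in the level-$8$, $n=1$ case. Your stated criterion, ``$A\in\mathcal{D}_2'$ and $A$ contains no pair of orthogonal norm-$1/2$ elements,'' is the Lemma \ref{lem:D2} condition for $A\oplus 4_s^{\pm1}$ to contain no isotropic $(\Z/2\Z)^2$; it is not the right condition for $(\Z/2\Z)^3$, and it contradicts the list you then assert. Concretely, $A=2_{I\!I}^{+4}$ contains an isotropic $(\Z/2\Z)^2$, hence $A\notin\mathcal{D}_2'$, so your criterion would exclude $2_{I\!I}^{+4}4_s^{\pm1}$ from $\mathcal{D}_3'$---yet the lemma correctly includes $2_{I\!I}^{\pm4}4_s^{\pm1}$ (similarly for $2_t^{\epsilon4}4_s^{\pm1}$ with $\epsilon e(t/8)=1$ and $2_t^{\epsilon5}4_s^{\pm1}$ with $\epsilon\leg{t}{2}=1$). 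The correct reduction is: the $2$-torsion of $A\oplus4_s^{\pm1}$ is $A\oplus\langle2\gamma\rangle$ with $2\gamma$ in the radical of the bilinear form and $\q(2\gamma)=1/2$, so an isotropic $(\Z/2\Z)^3$ projects isomorphically onto a rank-$3$ subgroup $K\subset A$ that is pairwise orthogonal with $\q(K)\subset\{0,1/2\}$, and conversely every such $K$ lifts (correct each norm-$1/2$ element by $2\gamma$; this is a homomorphism since $\q$ is additive on $K$). Thus $A\oplus4_s^{\pm1}\in\mathcal{D}_3'$ if and only if $A$ has no three independent pairwise orthogonal elements of norm $0$ or $1/2$---equivalently, no isotropic $(\Z/2\Z)^3$ and no isotropic $(\Z/2\Z)^2$ extended by an independent orthogonal norm-$1/2$ element. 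This condition is independent of whether $A\in\mathcal{D}_2'$, and since for level-$\leq4$ components it amounts to bounding the dimension of a totally isotropic subspace for the bilinear form on $\mathbb{F}_2^{\rk A}$ by $2$, it yields exactly $2_{I\!I}^{\pm m}$ with $m\leq4$ and $2_t^{\pm n}$ with $n\leq5$, as in the statement. Without this correction your argument does not produce the claimed list.
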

Now we can say under which conditions a $2$-adic discriminant form contains no isotropic subgroup isomorphic to $(\Z/2\Z)^3$.

\begin{prp}\label{prp:conditionsHoch32}
Let $D$ be a discriminant form. We choose a Jordan decomposition of $D$ and write $D = A \oplus B$, where $A$ denotes the sum over the components of type $2_{I\!I}^{\pm n}$, $2_{t}^{\pm n}$ and $4_{s}^{\pm m}$ and $B$ the sum over the remaining components. Then $D$ contains no isotropic subgroup isomorphic to $(\Z/2\Z)^3$ if and only if $B$ has rank $r$ with $0\leq r<3$ and $A\in\mathcal{D}_{3-r}'$.
\end{prp}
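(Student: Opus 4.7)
The plan is to analyze isotropic $(\Z/2\Z)^3 \subset D$ by projecting onto the two orthogonal summands $A$ and $B$. The key preliminary observation, which I would verify first, is that the $2$-torsion subgroup $B_{[2]}$ is \emph{totally isotropic}, meaning every element is isotropic and any two are mutually orthogonal. This is a direct computation on each indecomposable component of $B$: for $4_{I\!I}^{\pm n}$ the $2$-torsion is $\{2\gamma\}$ and $\q(2\gamma)=4\q(\gamma)\in\Z$, $(2\gamma,2\gamma')=4(\gamma,\gamma')\in\Z$; for a $2$-adic component of level $q=2^k$ with $k\geq 3$ the $2$-torsion is $\{(q/2)\gamma\}$ and the analogous level bound makes $\q((q/2)\gamma)=(q/2)^2\q(\gamma)$ and $((q/2)\gamma,(q/2)\gamma')$ integral. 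Also note that $\rk(B_{[2]})=\rk(B)$ since on each component the $2$-torsion has the same rank as the component itself.

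For the ``if'' direction (assume $r:=\rk(B)<3$ and $A\in\mathcal{D}'_{3-r}$; conclude no isotropic $(\Z/2\Z)^3$), suppose for contradiction that $G\cong(\Z/2\Z)^3$ is isotropic in $D$. Since $G$ is $2$-torsion, $G\subset A_{[2]}\oplus B_{[2]}$. Let $\pi_A\colon G\to A_{[2]}$ be the projection and set $G_B=\ker(\pi_A)=G\cap B_{[2]}$. For $\mu=a+b\in G$, the preliminary step gives $\q(b)=0\bmod 1$, hence $\q(a)=\q(\mu)=0\bmod 1$; an identical computation shows the bilinear form on $A_{[2]}$ vanishes on pairs from $\pi_A(G)$. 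Thus $\pi_A(G)\subset A$ is isotropic of rank $3-\rk(G_B)\geq 3-r$ (using $G_B\subset B_{[2]}$ and $\rk(B_{[2]})=r$), and any rank-$(3-r)$ subgroup of $\pi_A(G)$ is an isotropic $(\Z/2\Z)^{3-r}$ in $A$, contradicting $A\in\mathcal{D}'_{3-r}$.

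For the ``only if'' direction I would argue contrapositively by exhibiting an isotropic $(\Z/2\Z)^3\subset D$ whenever the right-hand conditions fail. If $\rk(B)\geq 3$, any three independent elements of $B_{[2]}$ generate such a subgroup directly by Step 1. If $r=\rk(B)<3$ but $A$ contains an isotropic $I\cong(\Z/2\Z)^{3-r}$, then $I\oplus B_{[2]}\cong(\Z/2\Z)^3$ is isotropic in $D$: the summands are orthogonal since $A\perp B$, and each summand is itself totally isotropic (the $A$-summand by hypothesis, the $B$-summand by Step 1). The only subtle ingredient is the preliminary total isotropy of $B_{[2]}$, which is a routine level-theoretic check on the Jordan symbols; the rest is bookkeeping of ranks over $\mathbb{F}_2$.
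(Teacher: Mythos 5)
Your proof is correct and follows essentially the same route as the paper: the paper's argument is the one-line observation that the order-$2$ elements of $B$ form a totally isotropic $(\Z/2\Z)^r$, so $D$ has an isotropic $(\Z/2\Z)^3$ iff $A$ has an isotropic $(\Z/2\Z)^{3-r}$, after which Lemmas \ref{lem:D1}--\ref{lem:D3} finish the job. You merely make explicit the level-theoretic check on $B_{[2]}$ and the projection/rank bookkeeping that the paper leaves implicit.
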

\begin{proof}
The elements of order $2$ in $B$ generate an isotropic subgroup isomorphic to $(\Z/2\Z)^r$, so $D$ contains no isotropic subgroup isomorphic to $(\Z/2\Z)^3$ if and only if $A$ contains no isotropic subgroup isomorphic to $(\Z/2\Z)^{3-r}$. The previous lemmata prove the proposition.
\end{proof}

Before we can show for which $2$-adic discriminant forms one has $\image(\uparrow)=\C[D]$, we need

\begin{lem}\label{lem:IndependentOfq}
	Let $A$ be a discriminant form isomorphic to a sum of components of type $2_{I\!I}^{\pm n}$, $2_{t}^{\pm n}$ and $4_{s}^{\pm m}$. Let $q,\Tilde{q}\geq8$ be powers of $2$, $t,\Tilde{t}\in\{1,3,5,7\}$ and $\epsilon = \leg{t}{2},\Tilde{\epsilon} = \leg{\Tilde{t}}{2}$. Then $\image(\uparrow) = \C[A\oplus q_t^{\epsilon}]$ if and only if $\image(\uparrow) = \C[A\oplus \Tilde{q}_{\Tilde{t}}^{\Tilde{\epsilon}}]$.
\end{lem}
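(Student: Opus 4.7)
The plan is to apply Proposition \ref{prp:eGammaIff21}: this translates the statement $\image(\uparrow) = \C[D]$ into the statement that every connected component of the graph $G_D$ is non-bipartite, and I will show that this condition depends only on $A$ (not on $(q,t,\epsilon)$) once $q \geq 8$. Fix a generator $\gamma_0$ of $q_t^\epsilon$ with $\q(\gamma_0) = t/(2q)$, and set $\mu_0 := (q/2)\gamma_0$. Then $\mu_0$ has order $2$ and norm $qt/8$, which lies in $\Z$ exactly because $q \geq 8$; thus $\mu_0$ is isotropic, and it is the unique nonzero element of order $2$ inside the cyclic component $q_t^\epsilon$. Moreover $(\mu_0, \gamma_0) = t/2$.

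Writing each $\gamma \in D$ uniquely as $\alpha + k\gamma_0$ with $\alpha \in A$ and $k \in \Z/q\Z$, I classify the edges of $G_D$. Any edge $\{\alpha+k\gamma_0,\, \alpha'+k'\gamma_0\}$ requires the difference to have order $2$, which forces $k \equiv k' \bmod q/2$. In the intra-layer case $k = k'$, the edge condition reduces to $\alpha - \alpha'$ being isotropic of order $2$ in $A$ with $(\alpha'-\alpha, \alpha) \equiv 0 \bmod 1$. In the cross-layer case $k' = k + q/2$, a short computation using $\q(\mu_0) = 0$ and $(\mu_0, k\gamma_0) = kt/2$ shows that the edge condition becomes: $\alpha' - \alpha$ isotropic of order $\leq 2$ in $A$ with $(\alpha' - \alpha, \alpha) \equiv kt/2 \bmod 1$. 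Since $t$ is odd, this right-hand side equals $0$ when $k$ is even and $1/2$ when $k$ is odd.

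Consequently $G_D$ decomposes as a disjoint union of $q/2$ subgraphs indexed by $\bar k \in \Z/(q/2)\Z$, each supported on the pair of layers $\{\alpha + k\gamma_0\}\cup\{\alpha + (k+q/2)\gamma_0\}$. Via $\alpha + k\gamma_0 \mapsto (\alpha, 0)$ and $\alpha + (k+q/2)\gamma_0 \mapsto (\alpha, 1)$, each such subgraph is isomorphic to one of two ``model graphs'' $G_{A,+}$ (for even $k$) or $G_{A,-}$ (for odd $k$) on $A \times \{0,1\}$, both defined intrinsically from $A$ with no dependence on $(q, t, \epsilon)$. Since $q \geq 8$ makes $q/2$ even, both parities appear among $k = 0, \ldots, q/2 - 1$.

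Combining Proposition \ref{prp:eGammaIff21} with this decomposition, $\image(\uparrow) = \C[D]$ is equivalent to every connected component of $G_D$ being non-bipartite, equivalently to every component of both $G_{A,+}$ and $G_{A,-}$ being non-bipartite, a condition depending only on $A$. Hence the conclusion holds identically for $A \oplus q_t^\epsilon$ and $A \oplus \tilde q_{\tilde t}^{\tilde\epsilon}$ whenever $q, \tilde q \geq 8$. The main technical step is the edge classification above; beyond that, the argument is a routine graph-theoretic consequence of Proposition \ref{prp:eGammaIff21}.
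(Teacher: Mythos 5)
Your proof is correct and follows essentially the same route as the paper: both reduce via Proposition \ref{prp:eGammaIff21} to showing that the graph $G_D$ is, up to isomorphism, independent of $(q,t,\epsilon)$ once $q\geq 8$, the key computations being that $(q/2)\gamma_0$ is isotropic and pairs with $k\gamma_0$ to give $kt/2\equiv k/2\bmod 1$. The paper packages this as a bijection of isotropic order-two elements inducing isomorphisms of the relevant connected components, whereas you give a global decomposition of $G_D$ into two-layer model graphs depending only on $A$; the mathematical content is the same.
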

\begin{proof}
	Suppose that $\image(\uparrow) = \C[A\oplus q_t^{\epsilon}]$ and let $\Tilde{\gamma}\in A\oplus \Tilde{q}_{\Tilde{t}}^{\Tilde{\epsilon}}$ be arbitrary. We assume that $q_t^{\epsilon}$ is generated by an element $\beta$ with $\q(\beta) = \frac{t}{2q}\bmod1$ and $\Tilde{q}_{\Tilde{t}}^{\Tilde{\epsilon}}$ is generated by an element $\Tilde{\beta}$ with $\q(\Tilde{\beta}) = \frac{\Tilde{t}}{2\Tilde{q}}\bmod1$. Write $\Tilde{\gamma} = \alpha + x\Tilde{\beta}$ with $\alpha\in A$ and $x\in\Z$. By assumption, for $\gamma = \alpha + x\beta\in A\oplus q_t^{\epsilon}$ we have $e^{\gamma}\in\image(\uparrow)$. Note that the elements in the connected component of $\gamma$ in $G_{A\oplus q_t^{\epsilon}}$ are of the form $\gamma+\sum_i\mu_i$, where $\mu_i\in A\oplus q_t^{\epsilon}$ is isotropic of order $2$. Similarly the elements in the connected component of $\Tilde{\gamma}$ in $G_{A\oplus \Tilde{q}_{\Tilde{t}}^{\Tilde{\epsilon}}}$ are of the form $\Tilde{\gamma}+\sum_i\Tilde{\mu_i}$, where $\Tilde{\mu}_i\in A\oplus \Tilde{q}_{\Tilde{t}}^{\Tilde{\epsilon}}$ is isotropic of order $2$. The map
	\[ \mu = \mu' + (q/2)\beta\mapsto\Tilde{\mu} = \mu' + (\Tilde{q}/2)\Tilde{\beta} \]
	defines a bijection from the isotropic elements of order $2$ in $A\oplus q_t^{\epsilon}$ to those in $A\oplus \Tilde{q}_{\Tilde{t}}^{\Tilde{\epsilon}}$. For all isotropic $\mu_1,\mu_2\in A\oplus q_t^{\epsilon}$ of order $2$ we have
	\begin{align*}
		(\Tilde{\gamma},\Tilde{\mu_1}) &= (\alpha,\mu'_1) + x(\Tilde{q}/2)\frac{\Tilde{t}}{\Tilde{q}}\mod1 \\
		&=  (\alpha,\mu'_1) + x\frac{\Tilde{t}}{2}\mod1 \\
		&= (\alpha,\mu'_1) + x(q/2)\frac{t}{q}\mod1 \\
		&= (\gamma,\mu_1)\mod1
	\end{align*}
	and
	\begin{align*}
		(\Tilde{\mu_1},\Tilde{\mu_2}) &= (\mu'_1,\mu'_2) + (\Tilde{q}/2)(\Tilde{q}/2)\frac{\Tilde{t}}{\Tilde{q}}\mod1 \\
		&= (\mu'_1,\mu'_2)\mod1 \\
		&= (\mu_1,\mu_2)\mod1.
	\end{align*}
	This shows that the connected component of $\gamma$ and the connected component of $\Tilde{\gamma}$ are isomorphic graphs and hence $e^{\Tilde{\gamma}}\in\image(\uparrow)$ as well. Reversing the roles of $A\oplus q_t^{\epsilon}$ and $A\oplus \Tilde{q}_{\Tilde{t}}^{\Tilde{\epsilon}}$ proves the lemma.
\end{proof}

As for the $p$-adic case we refine the result of Proposition \ref{prp:conditionsHoch32} to get a precise statement when $\image(\uparrow)=\C[D]$.

\begin{thm}\label{thm:main2}
Let $\mathcal{D}_1=\mathcal{D}_1'$, i.e. the set of discriminant forms consisting of
\begin{align*}
	&\{0\}, \ 2_{I\!I}^{-2}, \ 2_{t}^{\pm1}, \\
	&2_{t}^{\pm2}, \ \text{where }t=2\bmod4, \\
	&2_{t}^{\epsilon3}, \ \text{where }\epsilon\leg{t}{2}=-1, \\
	&4_{t}^{\pm1}, \ 2_{t}^{\pm1}4_{s}^{\pm1},
\end{align*}
let $\mathcal{D}_2$ be the set of discriminant forms consisting of
\begin{align*}
    &\{0\}, \ 2_{I\!I}^{\pm2}, \\
    &2_t^{\pm n}, \ \text{where } n\leq3 \\
    &2_t^{\epsilon 4}, \ \text{where } \epsilon e(t/8) \not= 1 \\
    &4_s^{\pm1}, \ 2_{I\!I}^{\pm2}4_s^{\pm1}, \\
    &2_t^{\pm n}4_s^{\pm1}, \ \text{where } n\leq3 \\
    &4_s^{\pm2}, \ 2_t^{\pm 1}4_s^{\pm2}
\end{align*}
and let $\mathcal{D}_3$ be the set of discriminant forms consisting of
\begin{align*}
    &\{0\}, \ 2_{I\!I}^{\pm2}, \ 2_{I\!I}^{\pm4}, \\
    &2_t^{\pm n}, \ \text{where } n\leq5 \\
    &2_t^{\pm6}, \ \text{where } t = 2\bmod4 \\
    &4_s^{\pm1}, \ 2_{I\!I}^{\pm2}4_s^{\pm1}, \ 2_{I\!I}^{\pm4}4_s^{\pm1}, \\
    &2_t^{\pm n}4_s^{\pm1}, \ \text{where } n\leq5 \\
    &4_s^{\pm2}, \ 2_{I\!I}^{\pm 2}4_s^{\pm2} \\
    &2_t^{\pm n}4_s^{\pm2}, \ \text{where } n\leq3 \\
    &4_s^{\pm3}, \ 2_t^{\pm 1}4_s^{\pm3}.
\end{align*}
Let $D$ be a discriminant form of level a power of $2$. We choose a Jordan decomposition of $D$ and write $D = A \oplus B$, where $A$ denotes the sum over the components of type $2_{I\!I}^{\pm n}$, $2_{t}^{\pm n}$ and $4_{s}^{\pm m}$ and $B$ the sum over the remaining components. Then $\image(\uparrow) = \C[D]$ unless $B$ has rank $r$ with $0\leq r<3$ and $A\in\mathcal{D}_{3-r}$.
\end{thm}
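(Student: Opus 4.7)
The plan is to mirror the proof of Theorem \ref{thm:mainp}, with the graph-theoretic criterion of Proposition \ref{prp:eGammaIff21} replacing the linear-algebraic criterion of Proposition \ref{prp:eGammaIff}. First, we dispose of the easy cases. If $B$ has rank $r\geq 3$, then its order-$2$ elements span an isotropic $(\Z/2\Z)^3$ in $D$, since they are pairwise orthogonal (they lie in distinct Jordan components) and isotropic (by choice of $B$). If $B$ has rank $r < 3$ but $A\notin\mathcal{D}_{3-r}'$, then $A$ contains an isotropic $(\Z/2\Z)^{3-r}$, which together with the order-$2$ subgroup of $B$ produces an isotropic $(\Z/2\Z)^3$ in $D$. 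In either case Corollary \ref{cor:global} gives $\image(\uparrow)=\C[D]$.

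It remains to analyse $r < 3$ with $A\in\mathcal{D}_{3-r}'$. Comparing the lists in Lemmas \ref{lem:D1}--\ref{lem:D3} with those in the statement, $\mathcal{D}_{3-r}'\setminus\mathcal{D}_{3-r}$ is empty for $r=2$, equals $\{2_{I\!I}^{-4}\}\cup\{2_t^{\epsilon 5}:\epsilon\leg{t}{2}=-1\}$ for $r=1$, and $\{2_{I\!I}^{-6}\}\cup\{2_t^{\epsilon 6}:\epsilon e(t/8)\neq 1,\ t\ \text{odd}\}\cup\{2_t^{\epsilon 7}:\epsilon\leg{t}{2}=-1\}$ for $r=0$. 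For $A$ in these ``extra'' forms I would show $\image(\uparrow)=\C[D]$; for $A\in\mathcal{D}_{3-r}$ I would exhibit a $\gamma$ with $e^\gamma\notin\image(\uparrow)$.

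For the positive subcase, by Proposition \ref{prp:eGammaIff21} it suffices to show that every $\gamma\in D$ lies on an odd isotropic cycle of $G_D$. Whenever $\gamma^\bot$ contains an isotropic $(\Z/2\Z)^2$ there is a triangle through $\gamma$ (by the remark following Proposition \ref{prp:eGammaIff21}), so only finitely many exceptional $\gamma$ remain in each form. For these one constructs odd isotropic cycles by hand, in analogy with the treatment of $p^{-\epsilon 6}$ and $p^{-4}q^{\pm 1}$ in the proof of Theorem \ref{thm:mainp}: split according to the order and norm of $\gamma$, and exploit the specific numerical structure of each extra form. For the $r=1$ case, Lemma \ref{lem:IndependentOfq} reduces the infinite family of possible $B$'s to a single representative. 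For the negative subcase I would pick $\gamma$ so that $\gamma^\bot$ contains at most one isotropic subgroup of order $2$ and invoke Lemma \ref{lem:onlyOneIso2}; when this fails (as happens for a few forms in $\mathcal{D}_{3-r}$), I would instead exhibit a bipartition of the connected component of $\gamma$ in $G_D$, equivalently a suitable element of $\ker(\downarrow)$ pairing to $1$ with $e^\gamma$, and conclude again by Proposition \ref{prp:eGammaIff21}.

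The main obstacle is the positive subcase for the ``large'' extras $2_{I\!I}^{-6}$, $2_t^{\epsilon 6}$ with $t$ odd, and $2_t^{\epsilon 7}$: since these contain no isotropic $(\Z/2\Z)^3$, Corollary \ref{cor:global} is unavailable, and for certain $\gamma$ even $\gamma^\bot$ carries no isotropic $(\Z/2\Z)^2$, so no triangle is readily available. One must therefore produce odd isotropic cycles of length $\geq 5$ explicitly, case by case, keeping careful track of the quadratic form on each Jordan component. This is the graph-theoretic analogue of the delicate construction for $p^{-\epsilon 6}$ in Theorem \ref{thm:mainp}, but is combinatorially more involved because the cycle lengths and parity conditions must be verified by hand rather than read off from a linear-algebraic identity.
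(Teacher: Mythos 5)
Your outline reproduces the paper's own strategy step for step: dispose of everything containing an isotropic $(\Z/2\Z)^3$ via Proposition \ref{prp:conditionsHoch32} and Corollary \ref{cor:global}, settle the $r=2$ stratum by exhibiting a $\gamma$ whose orthogonal complement contains only one isotropic element of order two and invoking Lemma \ref{lem:onlyOneIso2} (exactly how the paper proves $\mathcal{D}_1=\mathcal{D}_1'$), and use Lemma \ref{lem:IndependentOfq} to replace the infinite family of rank-one $B$'s by the single representative $8_1^{+1}$, so that only finitely many forms remain, each to be decided by the bipartiteness criterion of Proposition \ref{prp:eGammaIff21}. The only divergence is in how that finite residue is settled: the paper runs a Magma computation testing whether $G_D$ has a bipartite component, whereas you propose to produce the odd isotropic cycles and the bipartitions by hand. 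As written, this decisive step is announced but not carried out: for the ``extra'' forms such as $2_{I\!I}^{-4}$, $2_t^{\epsilon 5}$, $2_{I\!I}^{-6}$, $2_t^{\epsilon 7}$ (and their $r=1$ companions) you exhibit no odd cycle through the exceptional elements $\gamma$ whose complements carry no isotropic $(\Z/2\Z)^2$, and for the forms that remain in $\mathcal{D}_2$ and $\mathcal{D}_3$ but to which Lemma \ref{lem:onlyOneIso2} does not apply you exhibit no bipartition. Since the whole content of the theorem is which side of this dichotomy each residual form falls on, the argument has a genuine gap until these finitely many verifications are supplied (or replaced, as in the paper, by an explicit computation).

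A smaller point: your description of $\mathcal{D}_3'\setminus\mathcal{D}_3$ imposes the condition ``$t$ odd'' on the $2_t^{\epsilon 6}$ family, which is vacuous, since the oddity $t$ of $2_t^{\epsilon 6}$ is a sum of six odd residues and hence even. The forms actually dropped in passing from $\mathcal{D}_3'$ to $\mathcal{D}_3$ are those with $t\equiv 0\bmod 4$ and $\epsilon e(t/8)=-1$, namely $2_0^{-6}$ and $2_4^{+6}$. This does not affect the architecture of your argument, but it would matter as soon as the case-by-case verification is attempted, because it changes which forms require an explicit odd cycle and which require a bipartition.
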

\begin{proof}
If $D$ contains an isotropic subgroup isomorphic to $(\Z/2\Z)^3$, then by Corollary \ref{cor:global} $\image(\uparrow) = \C[D]$. Therefore, we consider the discriminant forms described in Proposition \ref{prp:conditionsHoch32}. We show that for those discriminant forms described in Theorem \ref{thm:main2}, there is an element $e^\gamma\not\in\image(\uparrow)$. For those discriminant forms not in Theorem \ref{thm:main2}, we show that $\image(\uparrow) = \C[D]$. \\
First we assume that $r=2$ and that $A\in\mathcal{D}_1'$. If $B=q_{I\!I}^{\pm 2}$ for $q$ some power of $2$ and $\gamma\in B$ of order $q$, then $\gamma^\bot=\langle\beta\rangle\oplus A$, where $\beta\in B$ is some element of order $q$. Therefore, the only isotropic element of order two orthogonal to $\gamma$ is $(q/2)\cdot\beta$. If on the other hand $B$ is the sum of odd $2$-adic components and generated by two anisotropic elements $\gamma$ and $\beta$ orthogonal to each other, then $\gamma^\bot=\langle\beta\rangle\oplus A$ and so the only isotropic element of order two orthogonal to $\gamma$ is $(\ord(\beta)/2)\cdot\beta$. In both cases Lemma \ref{lem:onlyOneIso2} implies $e^\gamma\not\in\image(\uparrow)$. Hence, $\mathcal{D}_1=\mathcal{D}_1'$. \\
Now we assume that $r\leq1$. If $r=1$, because of Lemma \ref{lem:IndependentOfq}, we can assume that $B = 8_1^{+1}$. Therefore, we only need to check for a finite number of discriminant forms $D$ whether $\image(\uparrow) = \C[D]$, i.e. whether $G_D$ contains no bipartite component. This can be done quickly by a computer. This was done using the computational algebra system Magma \cite{BCP}.
\end{proof}

\section{Main theorem}\label{sec:MainTheorem}

Now we can finally prove the main theorem. We first define what it means for a discriminant form to be of small type.

First let $D$ be a discriminant form of level a power of $p$ for some prime $p$. If $p$ is odd then we say that $D$ is of \textit{small type} if one of the following conditions holds:
\begin{enumerate}[(i)]
	\item $D$ has rank two or less.
	\item $D$ has rank three and at least one Jordan component is of level $p$.
	\item $D$ has rank four and is of type $p^{-\epsilon2}q_1^{\pm1}q_2^{\pm1}$, where $\epsilon = \leg{-1}{p}$ and $q_1,q_2$ are powers of $p$ and can also be $p$.
	\item $D$ has rank five and is of level $p$.
\end{enumerate}
If $p=2$ we choose a Jordan decomposition of $D$ and write $D = A \oplus B$, where $A$ denotes the sum over the components of type $2_{I\!I}^{\pm n}$, $2_{t}^{\pm n}$ and $4_{s}^{\pm m}$ and $B$ the sum over the remaining components. Then we say that $D$ is of small type if $B$ has rank $r$ with $0\leq r<3$ and $A\in\mathcal{D}_{3-r}$, where $\mathcal{D}_1$, $\mathcal{D}_2$ and $\mathcal{D}_3$ are as defined in Theorem \ref{thm:main2}. Recall that the components $2_{I\!I}^{\pm n}$, $2_{t}^{\pm n}$ and $4_{s}^{\pm m}$ are exactly those containing anisotropic elements of order $2$. \\	
Now let $D$ be a discriminant form of arbitrary level $N=\prod_{p|N}p^{\nu_p}$. We say that $D$ is of small type if for all $p\mid N$ the $p$-subgroups $D_{p^{\nu_p}}$ of $D$ are of small type.

\medskip
We remark that any discriminant form of rank $\geq7$ is not of small type and any discriminant form of odd level and rank $\geq6$ is not of small type. We get
\begin{thm}\label{thm:mainthm}
	\mainthm
\end{thm}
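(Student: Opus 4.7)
The plan is to combine Theorem \ref{thm:MainThmNeedsAllCD} with the $p$-adic classifications (Theorems \ref{thm:mainp} and \ref{thm:main2}), and then iterate via the transitivity of the isotropic lift.

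The key intermediate claim is that $\image(\uparrow^D) = \C[D]$ if and only if $D$ is not of small type. I would prove this by decomposing $D = \bigoplus_{p\mid N} D_{p^{\nu_p}}$ and identifying $\C[D] = \bigotimes_p \C[D_{p^{\nu_p}}]$. Since distinct $p$-subgroups are mutually orthogonal and have coprime orders, any subgroup $H\subset D$ splits as $H = \bigoplus_p H_p$ with $H_p \subset D_{p^{\nu_p}}$, and $H$ is isotropic iff each $H_p$ is; moreover the descent $\downarrow_H^D$ factors as $\bigotimes_p \downarrow_{H_p}^{D_{p^{\nu_p}}}$. This yields
\[ \ker(\downarrow^D) \;=\; \bigotimes_{p\mid N} \ker(\downarrow^{D_{p^{\nu_p}}}), \]
which is trivial iff one of the tensor factors is, equivalently iff $\image(\uparrow^{D_{p^{\nu_p}}}) = \C[D_{p^{\nu_p}}]$ for some $p$. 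By Theorems \ref{thm:mainp} and \ref{thm:main2} this holds iff some $p$-subgroup is not of small type, which by definition is the same as $D$ not being of small type. This tensor factorization is the main technical point here; the rest of the argument is essentially formal, since the heavy lifting has been done in Sections 3 and 4.

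The second assertion of the theorem now follows immediately from Theorem \ref{thm:MainThmNeedsAllCD}: when $D$ is of small type we have $\image(\uparrow^D) \subsetneq \C[D]$, hence there exist modular forms which are not linear combinations of isotropically lifted modular forms.

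For the first assertion I would induct on $|D|$. If $D$ is itself of small type, take the trivial subgroup $H = \{0\}$: then $\uparrow_0^D$ is the identity, $H^\bot/H = D$ is of small type, and $F = \uparrow_0^D(F)$ is already of the required shape. If $D$ is not of small type, the intermediate claim together with Theorem \ref{thm:MainThmNeedsAllCD} writes $F = \sum_{H \in \mathcal{H}} \uparrow_H^D(F_H)$ with each $F_H \in \mathrm{M}_\ka(H^\bot/H)$. Since every $H \in \mathcal{H}$ is non-trivial, $|H^\bot/H| = |D|/|H|^2 < |D|$, so the induction hypothesis expresses each $F_H$ as a combination of $\uparrow_{K/H}^{H^\bot/H}(F'_K)$ with $K/H \subset H^\bot/H$ isotropic and $K^\bot/K$ of small type. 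The transitivity $\uparrow_H^D \circ \uparrow_{K/H}^{H^\bot/H} = \uparrow_K^D$ of Proposition \ref{prp:liftIsTransitive} then repackages $F$ as a combination of $\uparrow_K^D(F'_K)$ with $K^\bot/K$ of small type, completing the induction.
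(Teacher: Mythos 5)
Your proposal is correct and follows essentially the same route as the paper: establish that $\image(\uparrow)=\C[D]$ precisely when $D$ is not of small type, combine this with Theorem \ref{thm:MainThmNeedsAllCD}, and induct on $|D|$ using the transitivity of the lifts. The one place you go beyond the paper is the explicit tensor factorization $\ker(\downarrow^D)=\bigotimes_p\ker(\downarrow^{D_{p^{\nu_p}}})$, which cleanly justifies the reduction to prime-power level that the paper only asserts at the start of Section 4 (to make it fully rigorous one should note, e.g.\ via Lemma \ref{lem:iPrimeIsI}, that the intersection defining $\ker(\downarrow)$ may be taken over prime-order subgroups, each of which lies in a single $p$-subgroup).
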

\begin{proof}
	If $D$ is of small type then $\image(\uparrow) \subsetneq \C[D]$ by the previous two subsections. From Theorem \ref{thm:MainThmNeedsAllCD} we know that in this case there exist modular forms which are not linear combinations of isotropically lifted modular forms. To show that for a discriminant form not of small type we can write all modular form as linear combinations of modular forms lifted from discriminant forms of small type we use induction on $|D|$. If $D$ is trivial then $D$ is of small type and there is nothing to prove. Now assume that $|D|>1$ and that the assertion holds for discriminant forms of order smaller than $|D|$. If $D$ is of small type then, again, there is nothing to prove. Otherwise we have seen in the previous two subsections that $\image(\uparrow) = \C[D]$. Again using Theorem \ref{thm:MainThmNeedsAllCD} we know that all modular forms for $D$ for all weights $\ka\in\frac{1}{2}\Z$ are linear combinations of isotropically lifted modular forms. Using the induction hypothesis and the fact that the isotropic lifts are transitive we get the result.
\end{proof}

\section{Acknowledgements}

The author acknowledges support by Deutsche Forschungsgemeinschaft through the Collaborative Research Centre TRR 326 \textit{Geometry and Arithmetic of Uniformized Structures}, project number 444845124.

\medskip

Furthermore, I would like to thank my advisor N.\ Scheithauer as well as M.\ Dittmann for stimulating discussions and the anonymous referee for their helpful comments.

\medskip

Declarations of interest: none


\end{document}